\newtheorem{thm}{Theorem}[section]
\newtheorem{df}{Definition}[section]
\newtheorem{rmk}{Remark}[section]
\newtheorem{prop}{Proposition}[section]
\newtheorem{lm}{Lemma}[section]
\newtheorem{cor}{Corollary}[section]
\numberwithin{equation}{section}
\def\@setauthors{%
  \begingroup
  \def\thanks{\protect\thanks@warning}%
  \trivlist
  \centering\footnotesize \@topsep30\p@\relax
  \advance\@topsep by -\baselineskip
  \item\relax
  \author@andify\authors
  \def\\{\protect\linebreak}%
  \authors%
  \ifx\@empty\contribs
  \else
    ,\penalty-3 \space \@setcontribs
    \@closetoccontribs
  \fi
  \endtrivlist
  \endgroup
}
\title{\textbf{Carleman Estimates for Backward Anisotropic  Stochastic Parabolic Equations with General Dynamic Boundary Conditions and Applications}}
\author{Said Boulite$^\dagger$, \,Abdellatif Elgrou$^\ddagger$,\, Lahcen Maniar$^{\S,\top}$ \,and\, Abdelaziz Rhandi$^\ddagger$}
\date{}
\begin{document}

\maketitle
\vspace{-2em}

\thispagestyle{firstpage}
\begin{center}
\end{center}

\begin{abstract}
We investigate a backward anisotropic stochastic parabolic equation with general dynamic boundary conditions, where the drift involves both $\mathbb{L}^2$ and $\mathbb{H}^{-1}$ bulk--surface terms. We first establish the well-posedness of this equation. Subsequently, we derive a new Carleman estimate through a two-step approach. In the first step, using a weighted identity method together with a careful treatment of the boundary integral terms arising from  the dynamic boundary conditions, we obtain an intermediate Carleman estimate for backward anisotropic  stochastic parabolic equations without weak divergence source terms. In the second step, a duality method combined with suitable optimization techniques is employed to incorporate the weak divergence source terms. As applications of the derived Carleman estimate, we address two control problems. First, we establish null controllability for forward anisotropic  stochastic parabolic equations with general dynamic boundary conditions. These equations involve both reaction and convection terms, with adapted, bounded stochastic bulk--surface coefficients. Moreover, we provide an explicit estimate of the null controllability cost, i.e., a bound on the minimal norm of controls required to drive the system to zero at the terminal time $T$. Second, we study an insensitizing control problem for this class of equations. The goal is to determine controls for systems with partially unknown initial data such that a given energy functional remains insensitive to small perturbations of these data. In this work, the functional involves the norm of the state over a localized bulk--surface region, together with the norm of its tangential gradient over a localized boundary region.
\end{abstract}

\vspace{1em}
\noindent\textbf{Keywords:} Carleman estimates; Anisotropic stochastic parabolic equations; Controllability; Dynamic boundary conditions; Surface diffusion; Control costs;  Insensitizing control.\\\\
\noindent\textbf{Mathematics Subject Classification:}  93B05, 93B07, 93E20, 60H15.
\section{Introduction and Main Results}
Fix a final time $T > 0$, and let $G \subset \mathbb{R}^N$ be a nonempty bounded open domain with boundary $\Gamma = \partial G$ of class $C^4$, where $N \geq 2$. Let $G_0, \mathcal{O}\Subset G$ be nonempty open subsets strictly contained in $G$, and let $\mathcal{O}^1_\Gamma$ and $\mathcal{O}^2_\Gamma$ be any nonempty open subsets of $\Gamma$. Denote by $\mathbbm{1}_S$ the characteristic function of a set $S \subset \overline{G}$, and by $\mathbb{E}[\cdot]$ the expectation operator with respect to the underlying probability space. The following time-space domains are considered:
\[
Q := (0,T) \times G, \qquad 
\Sigma := (0,T) \times \Gamma, \qquad 
Q_0 := (0,T) \times G_0.
\]

Let \( (\Omega, \mathcal{F}, \{\mathcal{F}_t\}_{t \geq 0}, \mathbb{P}) \) be a fixed complete filtered probability space on which a one-dimensional standard Brownian motion \( W(\cdot) \) is defined, such that \( \{\mathcal{F}_t\}_{t \geq 0} \) is the natural filtration generated by \( W(\cdot) \) and augmented by all the \( \mathbb{P} \)-null sets in \( \mathcal{F} \). Let \( \mathcal{X} \) be a Banach space, and let \( C([0,T]; \mathcal{X}) \) be the Banach space of all \( \mathcal{X} \)-valued continuous functions defined on \( [0,T] \). For some sub-sigma algebra \( \mathcal{G} \subset \mathcal{F} \), we denote by \( L^2_{\mathcal{G}}(\Omega; \mathcal{X}) \) the Banach space of all \( \mathcal{X} \)-valued \( \mathcal{G} \)-measurable random variables \( X \) such that \( \mathbb{E} \left[ \|X\|_\mathcal{X}^2 \right] < \infty \), with the canonical norm. We also denote by \( L^2_{\mathcal{F}}(0,T; \mathcal{X}) \) the Banach space consisting of all \( \mathcal{X} \)-valued \( \{\mathcal{F}_t\}_{t \geq 0} \)-adapted processes \( X(\cdot) \) such that \( \mathbb{E} \left[ \int_0^T\|X(t)\|^2_{\mathcal{X}}\,dt \right] < \infty \), with the canonical norm. Moreover, we denote by \( L^\infty_{\mathcal{F}}(0,T; \mathcal{X}) \) the Banach space consisting of all \( \mathcal{X} \)-valued \( \{\mathcal{F}_t\}_{t \geq 0} \)-adapted essentially bounded processes. Finally, we denote by \( L^2_{\mathcal{F}}(\Omega; C([0,T]; \mathcal{X})) \) the Banach space consisting of all \( \mathcal{X} \)-valued \( \{\mathcal{F}_t\}_{t \geq 0} \)-adapted continuous processes \( X(\cdot) \) such that \( \mathbb{E} \left[ \max_{0\leq t\leq T}\|X(t)\|^2_{\mathcal{X}} \right] < \infty \), with the canonical norm.
	
Consider the following product space:
\[
\mathbb{L}^2 := L^2(G, dx) \times L^2(\Gamma, d\sigma) = L^2(G) \times L^2(\Gamma),
\]
where \(dx\) denotes the Lebesgue measure on \(G\) and \(d\sigma\) denotes the surface measure on \(\Gamma\). One can easily verify that \(\mathbb{L}^2\) is a Hilbert space with the usual inner product
\[
\langle (y, y_\Gamma), (z, z_\Gamma) \rangle_{\mathbb{L}^2}
:= \langle y, z \rangle_{L^2(G)} + \langle y_\Gamma, z_\Gamma \rangle_{L^2(\Gamma)}.
\]

We assume that the functions \(a^{jk} : \overline{G} \to \mathbb{R}\) and \(a_\Gamma^{jk} : \Gamma \to \mathbb{R}\) (\(j,k = 1,2,\cdots,N\)), which represent the diffusion coefficients in the domain and on the boundary, respectively, satisfy the following assumptions:
\begin{enumerate}
    \item \( a^{jk} \in C^{2}(\overline{G}) \), and \( a^{jk} = a^{kj} \) for all \( j,k = 1, 2, \cdots, N \).
    \item \( a_\Gamma^{jk} \in C^{1}(\Gamma) \), and \( a_\Gamma^{jk} = a_\Gamma^{kj} \) for all \( j,k = 1, 2, \cdots, N \).
    \item There exists a constant \( \beta_0 > 0 \) such that
\begin{align}\label{asummAandAgama}
    \begin{aligned}
        \sum_{j,k=1}^N a^{jk}(x)\eta_j \eta_k &\geq \beta_0 |\eta|^2
\quad \textnormal{for all } (x,\eta)\equiv(x_1,\cdots,x_N,\eta_1,\cdots,\eta_N) \in \overline{G} \times \mathbb{R}^N, \\
\sum_{j,k=1}^N a_\Gamma^{jk}(x)\eta_j \eta_k  &\geq \beta_0 |\eta|^2
\quad \textnormal{for all } (x,\eta)\equiv(x_1,\cdots,x_N,\eta_1,\cdots,\eta_N) \in \Gamma \times \mathbb{R}^N,
    \end{aligned}
    \end{align}
  where $|\cdot|$ stands for the Euclidean norm in $\mathbb{R}^N$.
\end{enumerate}

Throughout this paper, $C$ denotes a generic positive constant depending only on $G$, $G_0$, $\beta_0$, $M$, and $M_\Gamma$, which may change from line to line, with
\[
M = \max_{1 \le j,k \le N}  \| a^{jk} \|_{C^2(\overline{G})}, \qquad 
M_\Gamma = \max_{1 \le j,k \le N}  \| a_\Gamma^{jk} \|_{C^1(\Gamma)}.
\]

The main goal of this paper is to establish the well-posedness and to derive a new Carleman estimate for the following nonhomogeneous linear backward stochastic parabolic equation subject to general dynamic boundary conditions:
\begin{equation}\label{1.1gen} 
\begin{cases} 
dz + \displaystyle\sum_{j,k=1}^N 
\frac{\partial}{\partial x_j}\Big(a^{jk}(x) \frac{\partial z}{\partial x_k}\Big) \, dt
= \Big(F_1 + \sum_{j=1}^N \frac{\partial F_j}{\partial x_j}\Big) \, dt + Z \, dW(t), & \textnormal{in } Q, \\[1mm]

dz_\Gamma + \displaystyle\sum_{j,k=1}^N 
D_j\Big(a_\Gamma^{jk}(x) D_k z_\Gamma\Big) \, dt
- \displaystyle\sum_{j,k=1}^N a^{jk}(x) \frac{\partial z}{\partial x_j} \nu^k \, dt  = \Big(F_2 - \displaystyle\sum_{j=1}^N F_j \nu^j + \sum_{j=1}^N D_j {F_\Gamma}_j\Big) \, dt 
+ \widehat{Z} \, dW(t), & \textnormal{on } \Sigma, \\[1mm]

z_\Gamma = z|_\Gamma, & \textnormal{on } \Sigma, \\[1mm]

(z, z_\Gamma)|_{t=T} = (z_T, z_{\Gamma,T}), & \textnormal{in } G \times \Gamma.
\end{cases} 
\end{equation}
Here, $(z, z_\Gamma; Z, \widehat{Z})$ denotes the state variable, where $z_\Gamma = z|_\Gamma$ represents the trace of $z$ on the boundary $\Gamma$, and $Z$ and $\widehat{Z}$ are the stochastic correction terms corresponding to $z$ and $z_\Gamma$, respectively. The terminal state $(z_T, z_{\Gamma,T})$ belongs to $L^2_{\mathcal{F}_T}(\Omega; \mathbb{L}^2)$. We emphasize that $z_{\Gamma,T}$ is not necessarily the trace of $z_T$, since no trace of $z_T$ is assumed to exist. However, if $z_T$ admits a well-defined trace on $\Gamma$, then this trace necessarily coincides with $z_{\Gamma,T}$. The terms $F_1$, $(F_j)_{1 \le j \le N}$, $F_2$, and $({F_\Gamma}_j)_{1 \le j \le N}$ represent the external source terms, which are assumed to satisfy
\begin{align*}
F_1 &\in L^2_{\mathcal{F}}(0,T; L^2(G)), 
\qquad
F_2 \in L^2_{\mathcal{F}}(0,T; L^2(\Gamma)), \\
F_j &\in L^2_{\mathcal{F}}(0,T; L^2(G)), 
\qquad
{F_\Gamma}_j \in L^2_{\mathcal{F}}(0,T; L^2(\Gamma)),\qquad j=1,2,\cdots,N.
\end{align*}
In \eqref{1.1gen}, $\frac{\partial}{\partial x_j}$ and $D_j$ denote, respectively, the standard partial derivative with respect to $x_j$ and the \(j\)-th tangential derivative operator on $\Gamma$, defined by
\[
D_j := \frac{\partial}{\partial x_j} - \nu^j \sum_{k=1}^N \nu^k \frac{\partial}{\partial x_k},\qquad j=1,2,\cdots,N,
\]
where $\nu^j$ is the $j$-th component of the unit outward normal vector \( \nu = (\nu^1, \dots, \nu^N) \) to the boundary \( \Gamma \). The tangential derivative represents the part of the standard partial derivative that is tangent to the surface, obtained by subtracting the normal component from the standard derivative.  For further details on tangential derivative operators, we refer the reader to \cite{BonnGuu98}. Throughout this work, the boundary $\Gamma$ of $G$ is regarded as a  $(N-1)$-dimensional compact Riemannian submanifold induced by the natural embedding $\Gamma \hookrightarrow \mathbb{R}^N$, and the associated surface measure $d\sigma$.

It is straightforward to see that, by introducing the notations
\[
\mathcal{A} = (a^{jk})_{1 \le j,k \le N}, \qquad 
\mathcal{A}_\Gamma = (a_\Gamma^{jk})_{1 \le j,k \le N}, \qquad 
F = (F_j)_{1 \le j \le N}, \qquad 
F_\Gamma = ({F_\Gamma}_j)_{1 \le j \le N},
\]
the system \eqref{1.1gen} can be equivalently rewritten in the following compact form:
\begin{equation}\label{1.1}
\begin{cases}
\begin{array}{ll}
dz + \nabla \cdot (\mathcal{A}(x) \nabla z) \, dt = (F_1 + \nabla \cdot F) \, dt + Z \, dW(t) & \textnormal{in } Q, \\[0.3em]
dz_\Gamma + \nabla_\Gamma \cdot (\mathcal{A}_\Gamma(x) \nabla_\Gamma z_\Gamma) \, dt - \partial_\nu^\mathcal{A} z \, dt = (F_2 - F \cdot \nu + \nabla_\Gamma \cdot F_\Gamma) \, dt + \widehat{Z} \, dW(t) & \textnormal{on } \Sigma, \\[0.3em]
z_\Gamma = z |_\Gamma & \textnormal{on } \Sigma, \\[0.3em]
(z, z_\Gamma) |_{t=T} = (z_T, z_{\Gamma,T}) & \textnormal{in } G \times \Gamma,
\end{array}
\end{cases}
\end{equation}
where the boundary term
\[
\partial_\nu^\mathcal{A} z := (\mathcal{A} \nabla z \cdot \nu)|_\Sigma = \sum_{j,k=1}^N a^{jk} \frac{\partial z}{\partial x_j} \nu^k
\]
represents the conormal derivative of \( z \) with respect to \( \mathcal{A} \). This conormal derivative serves as a coupling term between the bulk equation in \( Q \) and the surface equation on \( \Sigma \), so that the bulk equation is governed directly, while the surface equation is influenced through \( \partial_\nu^\mathcal{A} z \).  The operators $\nabla \cdot F$ (in $G$) and $\nabla_\Gamma \cdot F_\Gamma$ (on $\Gamma$) denote, respectively, the standard divergence in $G$ and the tangential divergence on $\Gamma$, defined by
\[
\nabla \cdot F := \sum_{j=1}^N \frac{\partial F_j}{\partial x_j}, \qquad
\nabla_\Gamma \cdot F_\Gamma := \sum_{j=1}^N D_j {F_\Gamma}_{j} 
= \sum_{j=1}^N \left( \frac{\partial {F_\Gamma}_{j}}{\partial x_j} - \nu^j \sum_{k=1}^N \nu^k \frac{\partial {F_\Gamma}_{j}}{\partial x_k} \right).
\]
The surface term $\nabla_\Gamma \cdot (\mathcal{A}_\Gamma \nabla_\Gamma z_\Gamma)$ models diffusion on $\Gamma$, where the tangential gradient is defined by
\[
\nabla_\Gamma z_\Gamma := (D_j z_\Gamma)_{1\leq j\leq N}=\nabla z - (\partial_\nu z) \nu,
\]
which is the projection of the gradient $\nabla z$ onto the tangent plane at $x\in\Gamma$.  For further details on these geometric tools and on dynamic boundary conditions, we refer the reader to \cite{Golde, khoutaibi2020null, maniar2017null, taylor2011}.

Similarly to the standard Sobolev spaces \( H^k(G) \) (\( k = 1,2 \)), we introduce the Sobolev spaces \( H^k(\Gamma) \) (\( k = 1,2 \)) on the boundary \( \Gamma \) by
\[
H^k(\Gamma)
= \bigl\{ z_\Gamma \in L^2(\Gamma) \,\big|\, \nabla_\Gamma z_\Gamma \in H^{k-1}(\Gamma; \mathbb{R}^N) \bigr\}, 
\qquad k = 1,2.
\]
These spaces are endowed with the following norms:
\[
\|z_\Gamma\|_{H^1(\Gamma)}
:= \langle z_\Gamma, z_\Gamma \rangle_{H^1(\Gamma)}^{1/2},
\quad \textnormal{where} \quad
\langle y_\Gamma, z_\Gamma \rangle_{H^1(\Gamma)}
= \int_{\Gamma} y_\Gamma z_\Gamma \, d\sigma
+ \int_{\Gamma} \nabla_\Gamma y_\Gamma \cdot \nabla_\Gamma z_\Gamma \, d\sigma,
\]
and
\[
\|z_\Gamma\|_{H^2(\Gamma)}
:= \langle z_\Gamma, z_\Gamma \rangle_{H^2(\Gamma)}^{1/2},
\quad \textnormal{where} \quad
\langle y_\Gamma, z_\Gamma \rangle_{H^2(\Gamma)}
= \int_{\Gamma} y_\Gamma z_\Gamma \, d\sigma
+ \int_{\Gamma} \Delta_\Gamma y_\Gamma \, \Delta_\Gamma z_\Gamma \, d\sigma.
\]

In the sequel, we introduce the following Hilbert spaces:
\[
\mathbb{H}^k
:= \bigl\{ (z, z_\Gamma) \in H^k(G) \times H^k(\Gamma) \,\big|\, z_\Gamma = z|_\Gamma \bigr\},
\qquad k = 1,2,
\]
which are endowed with the natural inner product
\[
\langle (y, y_\Gamma), (z, z_\Gamma) \rangle_{\mathbb{H}^k}
:= \langle y, z \rangle_{H^k(G)} + \langle y_\Gamma, z_\Gamma \rangle_{H^k(\Gamma)}.
\]
Moreover, throughout this paper we denote by $(H^k(G))'$, $H^{-k}(\Gamma)$,
and $\mathbb{H}^{-k}$ the dual spaces of $H^k(G)$, $H^k(\Gamma)$, and
$\mathbb{H}^k$, respectively, for $k = 1,2$, where the duality is taken with respect to the pivot spaces $L^2(G)$, $L^2(\Gamma)$, and $\mathbb{L}^2$.  For notational convenience, when we write
\[
(z, z_\Gamma) = (z_1, z_2) \quad \text{in } G \times \Gamma, \quad \text{a.s.},
\]
we mean that
\[
z = z_1 \quad \text{in } G \quad \text{and} \quad z_\Gamma = z_2 \quad \text{on } \Gamma,
\quad \text{a.s.}
\]
Throughout this paper,  the boundary vector fields
\[
\mathcal{A}_\Gamma \nabla_\Gamma z_\Gamma
:= \left( \sum_{k=1}^N a^{jk}_\Gamma(x)\, D_k z_\Gamma \right)_{1 \le j \le N}
\quad \text{and} \quad
F_\Gamma := \bigl( {F_\Gamma}_j \bigr)_{1 \le j \le N},
\]
appearing in \eqref{1.1gen}, are understood as \emph{tangential vector fields}
on the manifold $\Gamma$. More precisely, these vectors are regarded as functions
\( f_\Gamma : \Gamma \to \mathbb{R}^N \) such that, for each \( x \in \Gamma \),
\begin{align}\label{tangcond}
f_\Gamma(x) \in T_x \Gamma
:= \bigl\{ y \in \mathbb{R}^N \,\big|\, y \cdot \nu(x) = 0 \bigr\}.
\end{align}
This tangentiality assumption plays a crucial role in the derivation of the Carleman estimate \eqref{1.3} for \eqref{1.1gen}, as it allows for significant simplifications in the treatment of boundary integral terms. In particular, it permits the use of the standard tangential divergence formula
\[
\int_\Gamma (\nabla_\Gamma \cdot f_\Gamma)\, z_\Gamma \, d\sigma
= -\int_\Gamma f_\Gamma \cdot \nabla_\Gamma z_\Gamma \, d\sigma,
\]
which holds for \( f_\Gamma \in H^1(\Gamma; T_x\Gamma) \) and
\( z_\Gamma \in H^1(\Gamma) \). In the case where the tangentiality condition~\eqref{tangcond} is not satisfied,
we refer to Remark~\ref{remark1.2} for further details.

In Section~\ref{sec2SEC}, we prove that equation \eqref{1.1gen} is well posed. More precisely, for any given terminal data 
$(z_T, z_{\Gamma,T}) \in L^2_{\mathcal{F}_T}(\Omega; \mathbb{L}^2)$
and source terms
$F_1 \in L^2_\mathcal{F}(0,T; L^2(G))$, $F_2 \in L^2_\mathcal{F}(0,T; L^2(\Gamma))$, 
$F_j \in L^2_\mathcal{F}(0,T; L^2(G))$, and
${F_\Gamma}_j \in L^2_\mathcal{F}(0,T; L^2(\Gamma))$, $j=1,2,\cdots,N$, 
the system \eqref{1.1gen} admits a unique weak solution
\[
(z, z_\Gamma; Z, \widehat{Z})
\in
\Bigl(
L^2_\mathcal{F}\bigl(\Omega; C([0,T]; \mathbb{L}^2)\bigr)
\bigcap
L^2_\mathcal{F}(0,T; \mathbb{H}^1)
\Bigr)
\times
L^2_\mathcal{F}(0,T; \mathbb{L}^2).
\]
Moreover, there exists a constant $C>0$, depending only on $G$, $T$, $M$, and $M_\Gamma$, such that
\begin{align*}
& \max_{0 \le t \le T} \mathbb{E} \|(z(t), z_\Gamma(t))\|_{\mathbb{L}^2}^2
+ \|(z, z_\Gamma)\|_{L^2_\mathcal{F}(0,T; \mathbb{H}^1)}^2
+ \|(Z, \widehat{Z})\|_{L^2_\mathcal{F}(0,T; \mathbb{L}^2)}^2 \\
& \quad \le C \bigg(
\mathbb{E} \|(z_T, z_{\Gamma,T})\|_{\mathbb{L}^2}^2
+ \|F_1\|_{L^2_\mathcal{F}(0,T; L^2(G))}^2
+ \sum_{j=1}^N \|F_j\|_{L^2_\mathcal{F}(0,T; L^2(G))}^2 \\
& \qquad\qquad
+ \|F_2\|_{L^2_\mathcal{F}(0,T; L^2(\Gamma))}^2
+ \sum_{j=1}^N \|{F_\Gamma}_j\|_{L^2_\mathcal{F}(0,T; L^2(\Gamma))}^2
\bigg).
\end{align*}

In order to establish our main Carleman estimate for equation \eqref{1.1gen}, we first recall the following classical technical result due to Fursikov and Imanuvilov (see \cite{fursikov1996controllability}).
\begin{lm}\label{lm3.1}
    For any nonempty open subset \( G_1 \Subset G \), there exists a function \( \psi \in C^4(\overline{G}) \) such that
    \[
    \psi > 0 \quad \textnormal{in} \;\, G; \qquad \psi = 0 \quad \textnormal{on} \;\, \Gamma; \qquad |\nabla \psi| > 0 \quad \textnormal{in} \;\, \overline{G \setminus G_1}.
    \]
\end{lm}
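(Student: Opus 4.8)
The plan is to reduce the statement to a problem about confining the critical points of a suitable function to the prescribed set $G_1$. Observe first that the genuinely restrictive requirement is the last one: any $\psi$ with $\psi>0$ in $G$ and $\psi=0$ on $\Gamma$ must attain its maximum over the compact set $\overline{G}$ at an interior point, where necessarily $\nabla\psi=0$, so every admissible $\psi$ has at least one interior critical point. The content of the lemma is therefore that all such critical points can be forced to lie inside $G_1$, the only region where $|\nabla\psi|$ is allowed to vanish. I would accordingly construct $\psi$ in two stages: first produce an auxiliary smooth function with the correct sign and boundary behaviour whose critical set is as simple as possible, and then transport that critical set into $G_1$ by a change of variables that does not disturb the behaviour near $\Gamma$. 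Throughout I assume, as is implicit here, that $G$ is connected (otherwise $G_1$ would have to meet every connected component, since each component forces its own interior maximum).

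First I would build an auxiliary function $\beta\in C^\infty(\overline{G})$ with $\beta>0$ in $G$, $\beta=0$ on $\Gamma$, and $\partial_\nu\beta\neq 0$ on $\Gamma$, so that $\nabla\beta$ does not vanish in a collar neighbourhood of the boundary. Near $\Gamma$ one may take $\beta$ to coincide with a smooth multiple of the distance function $\mathrm{dist}(\cdot,\Gamma)$, which is smooth there because $\Gamma$ is smooth and satisfies $|\nabla\,\mathrm{dist}(\cdot,\Gamma)|=1$; this is then extended to a positive function on all of $G$. Since Morse functions are dense, I would next perturb $\beta$ in the interior, away from the collar, so that its critical points become finitely many and nondegenerate; the perturbation is supported away from $\Gamma$, hence preserves both the boundary values and the nonvanishing of the gradient near $\Gamma$. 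This yields a $\beta$ whose critical points form a finite interior set $\{x_1,\dots,x_m\}\subset G$.

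Next I would transport these points into $G_1$. Because $G$ is connected and $G_1$ is nonempty and open, there exists a diffeomorphism $\Phi:\overline{G}\to\overline{G}$, equal to the identity in a neighbourhood of $\Gamma$, with $\Phi(x_i)\in G_1$ for every $i$; such a $\Phi$ is obtained as the time-one flow of a compactly supported smooth vector field dragging each $x_i$ along a path into $G_1$ (isotopy extension, i.e. the transitivity of the diffeomorphism group on finite configurations of a connected manifold). Setting $\psi:=\beta\circ\Phi^{-1}$, the chain rule gives $\nabla\psi(y)=(D\Phi^{-1}(y))^{\!\top}\nabla\beta(\Phi^{-1}(y))$, and since $D\Phi^{-1}$ is invertible the critical points of $\psi$ are exactly $\{\Phi(x_1),\dots,\Phi(x_m)\}\subset G_1$. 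Because $\Phi$ maps $\overline{G}$ onto itself (interior to interior, $\Gamma$ to $\Gamma$) and is the identity near $\Gamma$, $\psi$ inherits $\psi=0$ on $\Gamma$ and $\psi>0$ in $G$, while $|\nabla\psi|>0$ on the compact set $\overline{G\setminus G_1}$; smoothing if necessary keeps $\psi\in C^4(\overline{G})$, indeed $C^\infty$, as required.

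I expect the main obstacle to be the transport step, namely the construction of the diffeomorphism $\Phi$ that simultaneously sends the whole critical set into $G_1$ and is the identity near $\Gamma$, so that the normalization $\psi=0$ on $\Gamma$ and the positivity $\psi>0$ in $G$ survive. Reducing the critical set to finitely many points via the Morse perturbation is precisely what makes this step clean, since points in a connected manifold can always be carried to any target open set by an ambient isotopy; transporting a general, possibly positive-dimensional, critical set would introduce embedding and knotting issues best avoided. Everything else—the collar construction from the distance function and the chain-rule verification of the three properties—is routine.
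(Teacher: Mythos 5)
The paper does not actually prove this lemma: it is quoted as the classical Fursikov--Imanuvilov weight-function lemma with a pointer to \cite{fursikov1996controllability}, so there is no in-paper argument to compare against. Your construction---a collar function built from $\mathrm{dist}(\cdot,\Gamma)$ with nonvanishing gradient near $\Gamma$, a small interior perturbation making the critical set finite and nondegenerate, and an ambient isotopy equal to the identity near $\Gamma$ that transports those finitely many critical points into $G_1$---is essentially the standard proof from that reference and is correct.
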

Since \(|\nabla \psi|^2 = |\nabla_\Gamma \psi|^2 + |\partial_\nu \psi|^2\) on \( \Gamma\), the function \( \psi \) in Lemma~\ref{lm3.1} also satisfies
\begin{equation}\label{3.20101}
\nabla_\Gamma \psi = 0, \qquad |\nabla \psi| = |\partial_\nu \psi|, \qquad \partial_\nu \psi \leq -C < 0 \quad \textnormal{on} \;\, \Gamma, \quad \textnormal{for some constant } \, C > 0.
\end{equation}

For any parameters $\lambda, \mu \geq 1$, we introduce the following weight
functions:
\begin{align}\label{3.2}
\begin{aligned}
\gamma \equiv \gamma(t) &:= (t(T-t))^{-1}, \qquad
\alpha \equiv \alpha(t,x) := \bigl(e^{\mu \psi(x)} - e^{2\mu \|\psi\|_\infty}\bigr)\gamma(t), \\
\ell &:= \lambda \alpha, \qquad
\theta := e^{\ell}, \qquad
\varphi \equiv \varphi(t,x) := e^{\mu \psi(x)}\,\gamma(t),
\end{aligned}
\end{align}
for all $(t,x) \in (0,T) \times \overline{G}$. Note that \(\alpha\) and \(\varphi\) are constant on the boundary \(\Gamma\), which implies
\begin{equation*}
\nabla_\Gamma \alpha = \nabla_\Gamma \varphi = 0 \quad \textnormal{on} \;\, \Gamma.
\end{equation*}
Moreover, one can readily verify that there exists a constant $C = C(G) > 0$ such that
\begin{align}\label{3.3}
\begin{aligned}
& \varphi \geq 4 T^{-2}, \qquad |\varphi_t| \leq C T \varphi^2, \qquad |\varphi_{tt}| \leq C T^2 \varphi^3, \\
& |\alpha_t| \leq C T e^{2 \mu \|\psi\|_\infty} \varphi^2, \qquad |\alpha_{tt}| \leq C T^2 e^{2 \mu \|\psi\|_\infty} \varphi^3.
\end{aligned}
\end{align}

In what follows, we choose the weight functions \(\theta\), \(\ell\), and \(\varphi\) as in \eqref{3.2}, and the function \(\psi\) as in Lemma \ref{lm3.1}, with \(G_1\) being any fixed nonempty open subset such that \(G_1 \Subset \mathcal{B}\), for any nonempty open subset $\mathcal{B}\Subset G$. The first main result of this paper is the following fundamental Carleman estimate for equation \eqref{1.1gen}, stated as follows:
\begin{thm}\label{thm1.1}
There exist large parameters  \( \mu_0, \lambda_0\geq1 \), and a constant \( C>0 \), depending only on \( G \), \( \mathcal{B} \), $\beta_0$, $M$, and $M_\Gamma$, such that for any data \( F_1 \in L^2_\mathcal{F}(0,T;L^2(G)) \), \( F_2 \in L^2_\mathcal{F}(0,T;L^2(\Gamma)) \), \( F_j \in L^2_\mathcal{F}(0,T;L^2(G)) \), \( {F_\Gamma}_j \in L^2_\mathcal{F}(0,T;L^2(\Gamma)) \), $j=1,2,\cdots,N$ and any terminal state \( (z_T, z_{\Gamma,T}) \in L^2_{\mathcal{F}_T}(\Omega; \mathbb{L}^2) \), the weak solution \( (z, z_\Gamma; Z, \widehat{Z}) \) of equation \eqref{1.1gen} satisfies the estimate
\begin{align}\label{1.3}
\begin{aligned}
&\, \lambda^3 \mu^4 \mathbb{E} \iint_Q \theta^2 \varphi^3 |z|^2 \, dx  dt + \lambda^3 \mu^3 \mathbb{E} \iint_\Sigma \theta^2 \varphi^3 |z_\Gamma|^2 \, d\sigma  dt \\
&+ \lambda \mu \mathbb{E} \iint_Q \theta^2 \varphi \vert \nabla z \vert^2 \, dx  dt + \lambda \mu \mathbb{E} \iint_\Sigma \theta^2 \varphi \vert \nabla_\Gamma z_\Gamma \vert^2 \, d\sigma  dt \\
&\leq C \bigg[ \lambda^3 \mu^4 \mathbb{E} \int_0^T\int_\mathcal{B} \theta^2 \varphi^3 |z|^2 \, dx  dt + \mathbb{E} \iint_Q \theta^2 |F_1|^2 \, dx  dt +  \mathbb{E} \iint_\Sigma \theta^2 |F_2|^2 \, d\sigma  dt \\
&\hspace{1cm} + \lambda^2 \mu^2 \sum_{j=1}^N\mathbb{E} \iint_Q \theta^2 \varphi^2 \vert F_j \vert^2 \, dx  dt + \lambda^2 \mu^2 \sum_{j=1}^N\mathbb{E} \iint_\Sigma \theta^2 \varphi^2 \vert {F_\Gamma}_j \vert^2 \, d\sigma  dt \\
&\hspace{1cm} + \lambda^2 \mu^2 \mathbb{E} \iint_Q \theta^2 \varphi^2 |Z|^2 \, dx  dt + \lambda^2 \mu^2 \mathbb{E} \iint_\Sigma \theta^2 \varphi^2 |\widehat{Z}|^2 \, d\sigma  dt \bigg],
\end{aligned}
\end{align}
for all \( \mu \geq \mu_0 \) and \( \lambda \geq \lambda_0(e^{2 \mu \|\psi\|_\infty} T + T^2) \).
\end{thm}

To the best of our knowledge, this paper is the first to establish the Carleman estimate \eqref{1.3} for backward anisotropic  stochastic parabolic equations with $\mathbb{L}^2$ bulk--surface source terms and $\mathbb{H}^{-1}$ bulk--surface source terms appearing as weak divergence-type sources, coupled with the general dynamic boundary condition
\[
\begin{cases} 
dz_\Gamma + \displaystyle\sum_{j,k=1}^N 
D_j\Big(a_\Gamma^{jk}(x) D_k z_\Gamma\Big) \, dt
- \displaystyle\sum_{j,k=1}^N a^{jk}(x) \frac{\partial z}{\partial x_j} \nu^k \, dt  
= \Big(F_2 - \displaystyle\sum_{j=1}^N F_j \nu^j + \sum_{j=1}^N D_j {F_\Gamma}_j\Big) \, dt 
+ \widehat{Z} \, dW(t),
& \textnormal{on } \Sigma, \\[1mm]
z_\Gamma = z|_\Gamma, & \textnormal{on } \Sigma.
\end{cases}
\]
Such systems are substantially more challenging to analyze than classical stochastic parabolic equations with standard Dirichlet boundary conditions (see \cite{Preprintelgrou23, liu2014global, tang2009null} and \cite[Chapter~9]{lu2021mathematical}), as well as Neumann or Robin boundary conditions (see \cite{RobinBCelg, yan2018carleman}). The additional difficulties arise from the combined presence of bulk--surface interactions, weak divergence-type source terms, and dynamic boundary effects, which induce strong coupling mechanisms and reduced regularity.

In the existing literature, Carleman estimates for systems with dynamic boundary conditions are available only in limited and simplified settings (see \cite{elgroubacconvdbc, Preprintelgrou231d, elgrou22SPEwithDBC}) or for certain coupled forward--backward stochastic systems (see \cite{barboelouk, mulobjdynbc25}). However, a comprehensive Carleman estimate for backward anisotropic  stochastic parabolic equations with general $\mathbb{H}^{-1}$ bulk--surface couplings and dynamic boundary conditions, together with an explicit Carleman constant independent of the final time $T$, has not yet been established. Addressing this gap is the main objective of the present paper.

In addition, we apply the derived Carleman estimate \eqref{1.3} to study the
null and approximate controllability, as well as an insensitizing control
problem, for forward reaction--convection--diffusion stochastic systems with
general dynamic boundary conditions. The explicit dependence of our Carleman constant on $T$ is exploited, for the first time, to obtain a sharp estimate on the problem data for the cost of null controllability of the associated controlled adjoint forward stochastic problem. In particular, this allows us to estimate the minimal norm of the controls required to drive the system to zero at time~$T$. These results highlight both the novelty and the significance of the present work.

By establishing the Carleman estimate \eqref{1.3}, we address three fundamental challenges:
\begin{enumerate}
\item We consider a fully general class of stochastic parabolic operators involving anisotropic diffusion matrices $(a^{jk})_{1\le j,k\le N}$ in the bulk and $(a_\Gamma^{jk})_{1\le j,k\le N}$ on the boundary, thus extending beyond the isotropic diffusion framework commonly studied in the literature. This
generality introduces many additional terms that must be treated with care.
\item We derive an explicit dependence of the Carleman constant on the final time $T$, which is essential for quantitative estimates and plays a key role in controllability problems.
\item We allow source terms in the drift components to belong to negative Sobolev spaces, which significantly relaxes the regularity assumptions and enables the treatment of weak divergence-type sources both in the bulk and on the boundary.
\end{enumerate}

To accomplish these objectives, we combine two complementary analytical methods.

\begin{itemize}
\item First, we apply the weighted identity method to the following general stochastic parabolic-type operators:
\[
\textnormal{``} dz + \sum_{j,k=1}^N 
\frac{\partial}{\partial x_j}\Big(a^{jk}(x) \frac{\partial z}{\partial x_k}\Big)\, dt \textnormal{''}
\quad \textnormal{and} \quad
\textnormal{``} dz_\Gamma + \sum_{j,k=1}^N 
D_j\Big(a_\Gamma^{jk}(x) D_k z_\Gamma\Big)\, dt
- \sum_{j,k=1}^N a^{jk}(x) \frac{\partial z}{\partial x_j} \nu^k\, dt \textnormal{''}.
\]
As a result, our Carleman estimate extends Theorem~4.1 of \cite{elgrou22SPEwithDBC}, which corresponds to the particular case
\[
(a^{jk})_{N\times N} = (a_\Gamma^{jk})_{N\times N} = (\delta^{jk})_{N\times N} = I_N,
\qquad
F_j = {F_\Gamma}_j = 0, \quad j = 1, 2,\cdots, N,
\]
where $I_N$ denotes the $N \times N$ identity matrix and $\delta^{jk}$ is the Kronecker delta. In addition, we obtain an explicit dependence of the Carleman constant on the final time $T$. This generalization allows for anisotropic diffusion effects through the matrices $(a^{jk})_{N\times N}$ and $(a_\Gamma^{jk})_{N\times N}$, thus covering a substantially broader class of diffusion processes. A major difficulty in our analysis arises from the treatment of new boundary integral terms generated by the general dynamic boundary condition
\[
\begin{cases} 
dz_\Gamma + \displaystyle\sum_{j,k=1}^N 
D_j\Big(a_\Gamma^{jk}(x) D_k z_\Gamma\Big) \, dt
- \displaystyle\sum_{j,k=1}^N a^{jk}(x) \frac{\partial z}{\partial x_j} \nu^k \, dt  
= F_2 \, dt 
+ \widehat{Z} \, dW(t),
& \textnormal{on } \Sigma, \\[1mm]
z_\Gamma = z|_\Gamma, & \textnormal{on } \Sigma.
\end{cases}
\]
These terms require a careful and adapted analysis, especially in the proof of Lemma~\ref{lm2.2}.

\item Second, to handle the weak divergence-type source terms $F_j$ and ${F_\Gamma}_j$, $j=1,2,\cdots,N$, we employ a duality method combined with suitable optimization arguments (see \cite{liu2014global} and the references therein for the case of Dirichlet boundary conditions). This strategy allows us to absorb the divergence-type contributions into the Carleman inequality and thus derive the desired estimate \eqref{1.3}. In contrast to \cite{liu2014global}, where Carleman estimates are mainly used to relax regularity assumptions on diffusion source terms, our approach is specifically \emph{designed} to address weak divergence-type sources of the form
\[
\sum_{j=1}^N \frac{\partial F_j}{\partial x_j} \quad \textnormal{in } Q,
\qquad \textnormal{and} \qquad
\sum_{j=1}^N D_j {F_\Gamma}_j \quad \textnormal{on } \Sigma.
\]
The main difficulty lies in the fact that the functions $F_j$ and ${F_\Gamma}_j$ belong only to $L^2$-spaces. Consequently, the corresponding divergence terms are only defined in the distributional sense and belong to the negative Sobolev space $H^{-1}$, which prevents the direct application of standard Carleman techniques and necessitates the duality-based framework developed in this work.
\end{itemize}

In this paper, as an application of our Carleman estimate \eqref{1.3}, we
investigate two control problems: the \emph{null controllability} and an
\emph{insensitizing control problem} for the following forward anisotropic 
stochastic reaction--convection--diffusion system with general dynamic boundary conditions:
\begin{equation}\label{1.4}
\begin{cases}
\begin{array}{ll}
dy - \nabla\cdot(\mathcal{A}\nabla y)\,dt
= \bigl(\xi+a_1 y + B_1 \cdot \nabla y + \mathbbm{1}_{G_0} u\bigr)\,dt + v_1\,dW(t)
& \textnormal{in } Q, \\[0.3em]
dy_\Gamma - \nabla_\Gamma \cdot (\mathcal{A}_\Gamma \nabla_\Gamma y_\Gamma)\,dt
+ \partial_\nu^\mathcal{A} y\,dt
= \bigl(\xi_\Gamma+a_2 y_\Gamma + B_2 \cdot \nabla_\Gamma y_\Gamma\bigr)\,dt + v_2\,dW(t)
& \textnormal{on } \Sigma, \\[0.3em]
y_\Gamma = y|_\Gamma
& \textnormal{on } \Sigma, \\[0.3em]
(y, y_\Gamma)|_{t=0} = (y_0+\tau_1\widehat{y}_0, y_{\Gamma,0}+\tau_2\widehat{y}_{\Gamma,0})
& \textnormal{in } G \times \Gamma.
\end{array}
\end{cases}
\end{equation}
Here, $(y, y_\Gamma)$ denotes the state variable of the system, with $y$
representing the bulk state and $y_\Gamma$ its trace on the boundary. The
initial datum $(y_0, y_{\Gamma,0})$ is assumed to belong to
$L^2_{\mathcal{F}_0}(\Omega; \mathbb{L}^2)$. We further
assume that $(\widehat{y}_0, \widehat{y}_{\Gamma,0}) \in
L^2_{\mathcal{F}_0}(\Omega; \mathbb{L}^2)$ is an unknown perturbation of $(y_0, y_{\Gamma,0})$ satisfying
\[
\mathbb{E}\,\|(\widehat{y}_0, \widehat{y}_{\Gamma,0})\|_{\mathbb{L}^2}^2 = 1,
\]
and that $\tau_1, \tau_2 \in \mathbb{R}$ are unknown small parameters. The functions
$\xi \in L^2_\mathcal{F}(0,T; L^2(G))$ and $\xi_\Gamma \in
L^2_\mathcal{F}(0,T; L^2(\Gamma))$ denote the given source terms in the bulk
and on the boundary, respectively. Accordingly, the term $(\tau_1\widehat{y}_0, \tau_2\widehat{y}_{\Gamma,0})$ may be regarded as a small perturbation of the nominal initial data $(y_0, y_{\Gamma,0})$. The coefficients $a_1$ and $B_1$ describe the reaction and convection effects in
the bulk, while $a_2$ and $B_2$ account for the corresponding effects on the
boundary. They are assumed to be bounded and $\{\mathcal{F}_t\}_{t \ge 0}$-adapted stochastic processes. More precisely, we assume that
\begin{align*}
a_1 &\in L_\mathcal{F}^\infty(0,T; L^\infty(G)), 
\qquad\quad B_1 \in L_\mathcal{F}^\infty(0,T; L^\infty(G; \mathbb{R}^N)), \\
a_2 &\in L_\mathcal{F}^\infty(0,T; L^\infty(\Gamma)), 
\qquad\quad B_2 \in L_\mathcal{F}^\infty(0,T; L^\infty(\Gamma; \mathbb{R}^N)).
\end{align*}

The control mechanism in \eqref{1.4} consists of a distributed control $u$ acting
on the interior subdomain $G_0 \Subset G$, together with stochastic controls
$v_1$ and $v_2$ acting in the bulk and on the boundary, respectively. Accordingly, the admissible control triple satisfies
\[
(u, v_1, v_2) \in L^2_\mathcal{F}(0,T; L^2(G_0))
\times L^2_\mathcal{F}(0,T; L^2(G))
\times L^2_\mathcal{F}(0,T; L^2(\Gamma)).
\]

Similarly to the proof of \cite[Theorem 2.1]{elgroubacconvdbc}, one can show that system \eqref{1.4} is well posed.  More precisely,  for any initial datum $(y(0),y_\Gamma(0)) \in L^2_{\mathcal{F}_0}(\Omega; \mathbb{L}^2)$, source terms
$\xi \in L^2_\mathcal{F}(0,T; L^2(G))$ and $\xi_\Gamma \in
L^2_\mathcal{F}(0,T; L^2(\Gamma))$, and any control triple
\[
(u, v_1, v_2) \in L^2_\mathcal{F}(0,T; L^2(G_0)) \times
L^2_\mathcal{F}(0,T; L^2(G)) \times L^2_\mathcal{F}(0,T; L^2(\Gamma)),
\]
system \eqref{1.4} admits a unique weak solution
\[
(y, y_\Gamma) \in L^2_\mathcal{F}(\Omega; C([0,T]; \mathbb{L}^2)) \bigcap
L^2_\mathcal{F}(0,T; \mathbb{H}^1).
\]
Moreover, there exists a constant
$C = C(G,T,\beta_0, M,M_\Gamma) > 0$ such that
\begin{align*}
& \max_{0 \le t \le T} \mathbb{E}\, \| (y(t), y_\Gamma(t)) \|^2_{\mathbb{L}^2}
+ \| (y, y_\Gamma) \|^2_{L^2_\mathcal{F}(0,T; \mathbb{H}^1)} \\
& \qquad \le C \Big( \mathbb{E}\, \| (y(0),y_\Gamma(0)) \|^2_{\mathbb{L}^2} 
+ \|\xi\|^2_{L^2_\mathcal{F}(0,T; L^2(G))} 
+ \|\xi_\Gamma\|^2_{L^2_\mathcal{F}(0,T; L^2(\Gamma))} \\
& \hspace{2cm} + \| u \|^2_{L^2_\mathcal{F}(0,T; L^2(G_0))} 
+ \| v_1 \|^2_{L^2_\mathcal{F}(0,T; L^2(G))} 
+ \| v_2 \|^2_{L^2_\mathcal{F}(0,T; L^2(\Gamma))} \Big).
\end{align*}

The system \eqref{1.4} with $\xi \equiv \xi_\Gamma \equiv u \equiv v_1 \equiv v_2 \equiv 0$
models diffusion phenomena, such as the evolution of heat, bacterial populations,
or chemical concentrations, under stochastic perturbations and in interaction
with the boundary through the dynamic boundary conditions
\eqref{1.4}$_2$--\eqref{1.4}$_3$, which involve the time derivative of the boundary state.  
The convection terms $B_1 \cdot \nabla y$ and $B_2 \cdot \nabla_\Gamma y_\Gamma$ 
describe the transport of these quantities in a moving medium, such as a fluid flow with a prescribed velocity field. For further details on stochastic diffusion
models, we refer to \cite{DapratoZabc,lu2021mathematical} and the references therein.

We define the following energy functional, referred to as the
\emph{sentinel}, associated with equation \eqref{1.4} and localized on the bulk--surface observation sets
$\mathcal{O} \subset G$ and $\mathcal{O}_\Gamma^1, \mathcal{O}_\Gamma^2 \subset \Gamma$:
\begin{align}\label{functioPhii1.2}
\begin{aligned}
    \mathcal{E}(y,y_\Gamma)
    :=&\; \frac{1}{2}\mathbb{E}\int_0^T\int_{\mathcal{O}} |y|^2 \,dx \,dt
       + \frac{1}{2}\mathbb{E}\int_0^T\int_{\mathcal{O}^1_\Gamma}|y_\Gamma|^2 \,d\sigma \,dt  \\
    &\;  + \frac{1}{2}\mathbb{E}\int_0^T\int_{\mathcal{O}^2_\Gamma}|\nabla_\Gamma y_\Gamma|^2 \,d\sigma \,dt,
    \end{aligned}
\end{align}
where $(y,y_\Gamma)$ denotes the solution of \eqref{1.4} corresponding to the parameters $\tau_1,\tau_2$ and to the controls $u$, $v_1$, and $v_2$.

Introduced by J.-L.~Lions (see \cite{lions1989quel}), the notion of
\emph{insensitizing control} consists in finding controls $(u, v_1, v_2)$ such that the energy functional $\mathcal{E}$ is locally invariant with respect to small perturbations of the initial data. More precisely, the insensitizing control problem associated with system \eqref{1.4} is defined as follows.
\begin{df}\label{deff1}
Let $\xi \in L^2_\mathcal{F}(0,T;L^2(G))$, $\xi_\Gamma \in L^2_\mathcal{F}(0,T;L^2(\Gamma))$, and $(y_0,y_{\Gamma,0}) \in L^2_{\mathcal{F}_0}(\Omega;\mathbb{L}^2)$.  
A control triple $(u,v_1,v_2)$ is said to \emph{insensitize} the functional $\mathcal{E}$ if
\begin{align}\label{inspb}
    \frac{\partial \mathcal{E}(y,y_\Gamma)}{\partial \tau_1}\bigg|_{\tau_1=\tau_2=0}= \frac{\partial \mathcal{E}(y,y_\Gamma)}{\partial \tau_2}\bigg|_{\tau_1=\tau_2=0} = 0,
    \qquad \forall\,(\widehat{y}_0,\widehat{y}_{\Gamma,0}) \in L^2_{\mathcal{F}_0}(\Omega;\mathbb{L}^2)
    \ \;\textnormal{with}\; \
    \mathbb{E}\|(\widehat{y}_0,\widehat{y}_{\Gamma,0})\|_{\mathbb{L}^2}^2 = 1.
\end{align}
\end{df}

For the sake of clarity, the following notations will be used in certain parts of this paper:

\begin{itemize}
    \item \textbf{Controlled equation:}  
For the null controllability problem, we denote by \textnormal{\textbf{(CE)}} the system
\eqref{1.4} with $\tau_1\equiv\tau_2 \equiv 0$ and 
$\xi \equiv \xi_\Gamma \equiv 0$, namely,
\begin{equation}\tag{\textnormal{\textbf{CE}}}\label{CE}
\begin{cases}
\begin{array}{ll}
dy - \nabla\cdot(\mathcal{A}\nabla y)\,dt
= \bigl(a_1 y + B_1 \cdot \nabla y + \mathbbm{1}_{G_0} u\bigr)\,dt + v_1\,dW(t)
& \textnormal{in } Q, \\[0.3em]
dy_\Gamma - \nabla_\Gamma \cdot (\mathcal{A}_\Gamma \nabla_\Gamma y_\Gamma)\,dt
+ \partial_\nu^\mathcal{A} y\,dt
= \bigl(a_2 y_\Gamma + B_2 \cdot \nabla_\Gamma y_\Gamma\bigr)\,dt + v_2\,dW(t)
& \textnormal{on } \Sigma, \\[0.3em]
y_\Gamma = y|_\Gamma
& \textnormal{on } \Sigma, \\[0.3em]
(y, y_\Gamma)|_{t=0} = (y_0, y_{\Gamma,0})
& \textnormal{in } G \times \Gamma.
\end{array}
\end{cases}
\end{equation}

\item \textbf{State shorthand:} For convenience, we set
$Y_0 := (y_0, y_{\Gamma,0})$ to denote the pair consisting of the bulk and
boundary components of the state at the initial time $t = 0$.

\item \textbf{Control space:} 
    The admissible control space associated with system \eqref{1.4} is defined as
    \[
    \mathcal{U}_T := L^2_{\mathcal{F}}(0,T; L^2(G_0)) 
    \times L^2_{\mathcal{F}}(0,T; L^2(G)) 
    \times L^2_{\mathcal{F}}(0,T; L^2(\Gamma)),
    \]
    so that any control triple $(u,v_1,v_2) \in \mathcal{U}_T$ corresponds respectively to the distributed control in the interior, the stochastic bulk control, and the stochastic boundary control.
  \item \textbf{Control energy:} 
    For a control triple $(u,v_1,v_2) \in \mathcal{U}_T$, we define the associated energy by
    \[
    E(u,v_1,v_2) := \frac{1}{2}\mathbb{E}\int_0^T\Big\{ \|u\|_{L^2(G_0)}^2+\|v_1\|_{L^2(G)}^2+\|v_2\|_{L^2(\Gamma)}^2\Big\} \, dt.
    \]
\end{itemize}

The second main result of this paper concerns the null controllability of the system \eqref{CE}.
\begin{thm}\label{thmm1.2null}
The system \eqref{CE} is null controllable at time \(T\) for any control region \(G_0 \Subset G\). More precisely, for any initial state 
\(Y_0 \in L^2_{\mathcal{F}_0}(\Omega; \mathbb{L}^2)\), there exists a control triple 
\((\widehat{u}, \widehat{v}_1, \widehat{v}_2) \in \mathcal{U}_T\) such that the corresponding solution \((\widehat{y}, \widehat{y}_\Gamma)\) of \eqref{CE} satisfies
\begin{align}\label{nullCont}
(\widehat{y}(T, \cdot), \widehat{y}_\Gamma(T, \cdot)) = (0, 0) \quad \textnormal{in } G \times \Gamma, \quad \textnormal{a.s.}
\end{align}
Moreover, the controls can be chosen to satisfy
\begin{align}\label{1.2201}
\begin{aligned}
E(\widehat{u},\widehat{v}_1,\widehat{v}_2)
\leq \exp(C K_T) \, \mathbb{E}\| Y_0 \|^2_{\mathbb{L}^2},
\end{aligned}
\end{align}
where the constant \(K_T\) is given by
\begin{align}\label{constK}
K_T := 1 + \frac{1}{T} + \|a_1\|_\infty^{2/3} + \|a_2\|_\infty^{2/3} + T (\|a_1\|_\infty + \|a_2\|_\infty) + (1 + T) (\|B_1\|_\infty^2 + \|B_2\|_\infty^2).
\end{align}
\end{thm}

Once Theorem~\ref{thmm1.2null} is established, the set of admissible internal controls
\[
\mathcal{A}_{ad}(Y_0,G_0) := \bigl\{(u,v_1,v_2) \in \mathcal{U}_T \;\big|\; \textnormal{the solution } (y,y_\Gamma) \textnormal{ of } \eqref{CE} \textnormal{ satisfies } \eqref{nullCont} \bigr\}
\]
is nonempty. We then define the cost of null controllability, that is, the minimal $L^2$-norm of the controls required to steer the system to zero at time \(T\), by
\[
\mathcal{K}(Y_0,G_0) := \inf_{(u,v_1,v_2) \in \mathcal{A}_{ad}(Y_0,G_0)} 
E(u,v_1,v_2).
\]

As a direct consequence of Theorem \ref{thmm1.2null}, one can deduce the following explicit bound on the null controllability cost of the system \eqref{CE} .
\begin{cor}
Let \(Y_0 \in L^2_{\mathcal{F}_0}(\Omega; \mathbb{L}^2)\), \(T>0\), and let the coefficients \(a_i \in L_\mathcal{F}^\infty(0,T; L^\infty(G))\) and \(B_i \in L_\mathcal{F}^\infty(0,T; L^\infty(G;\mathbb{R}^N))\), \(i=1,2\). Then, the cost of null controllability of \eqref{CE} satisfies
\begin{align}\label{nulconcost}
\mathcal{K}(Y_0,G_0) \leq \exp(C K_T)\,\mathbb{E}\|Y_0\|^2_{\mathbb{L}^2},
\end{align}
where the constants $C$ and $K_T$ are the same as defined in \eqref{1.2201}-\eqref{constK}.
\end{cor}

The third main result of this paper concerns the existence of insensitizing controls for \eqref{1.4}.
\begin{thm}\label{thmm1.3ins}
Assume that $G_0 \cap \mathcal{O} \neq \emptyset$, and let $\mathcal{O}_\Gamma^1, \mathcal{O}_\Gamma^2 \subset \Gamma$ be arbitrary nonempty open subsets, and set $(y_0, y_{\Gamma,0}) = (0,0)$. Then there exist constants $K > 0$ and $C > 0$, depending only on $G$, $G_0$, $\mathcal{O}$,  $\mathcal{O}^1_\Gamma$, $\mathcal{O}^2_\Gamma$, $T$, $\beta_0$, $M$, $M_\Gamma$, and the coefficients $a_i$, $B_i$ ($i=1,2$), such that for any $\xi \in L^2_\mathcal{F}(0,T;L^2(G))$, and $\xi_\Gamma \in L^2_\mathcal{F}(0,T;L^2(\Gamma))$,
satisfying
\begin{align}
\mathbb{E} \iint_Q \exp(K t^{-1}) |\xi|^2 \, dx\, dt< \infty,\quad 
\mathbb{E} \iint_\Sigma \exp(K t^{-1}) |\xi_\Gamma|^2 \, d\sigma\, dt < \infty,
\end{align}
there exists a control triple $(u, v_1, v_2) \in \mathcal{U}_T$ that insensitizes the functional $\mathcal{E}$ in the sense of Definition~\ref{deff1}. Moreover, the controls satisfy the estimate
\begin{align}\label{estiof controls}
E(u,v_1,v_2) \leq C \bigg[ 
\mathbb{E} \iint_Q \exp(K t^{-1}) |\xi|^2 \, dx\, dt + 
\mathbb{E} \iint_\Sigma \exp(K t^{-1}) |\xi_\Gamma|^2 \, d\sigma\, dt 
\bigg].
\end{align}
\end{thm}

Moreover, as a direct consequence of our Carleman estimate \eqref{1.3}, it is not difficult to deduce the following unique continuation property: The solution $(z,z_\Gamma;Z,\widehat{Z})$ of the adjoint equation of \eqref{CE} (more precisely, the equation \eqref{4.1}) satisfies that
\begin{align*}
(z,Z,\widehat{Z})=(0,0,0)\,\,\,\textnormal{in}\,\,\,Q_0\times Q\times\Sigma,\;\,\textnormal{a.s.}\Longrightarrow\;\,(z_T,z_{\Gamma,T})=(0,0),\,\,\,\textnormal{in}\,\,\, G\times\Gamma,\;\,\textnormal{a.s.}
\end{align*}
Hence, using the above uniqueness together with a duality argument, we  deduce  the following approximate controllability of  \eqref{CE}.
\begin{cor}\label{thmm1.3Appr}
Equation \eqref{CE} is approximately controllable at time \(T\) for any control region \(G_0 \Subset G\). More precisely, for any initial state \(Y_0 = (y_0, y_{\Gamma,0}) \in L^2_{\mathcal{F}_0}(\Omega; \mathbb{L}^2)\), any target state \(Y_T = (y_T, y_{\Gamma,T}) \in L^2_{\mathcal{F}_T}(\Omega; \mathbb{L}^2)\), and any \(\varepsilon > 0\), there exists a control triple \(
(\widehat{u}, \widehat{v}_1, \widehat{v}_2) \in \mathcal{U}_T\)
such that the corresponding solution \((\widehat{y}, \widehat{y}_\Gamma)\) to \eqref{CE} satisfies
\begin{align}\label{apprCont}
\mathbb{E}\| \widehat{y}(T) - y_T \|^2_{L^2(G)} 
+ \mathbb{E}\| \widehat{y}_\Gamma(T) - y_{\Gamma,T} \|^2_{L^2(\Gamma)} \leq \varepsilon.
\end{align}
\end{cor}

This paper investigates the null controllability and an insensitizing control problem for a class of forward anisotropic stochastic parabolic equations with general dynamic boundary conditions. For controllability results concerning \emph{deterministic} parabolic equations with such boundary conditions, we refer the reader to \cite{haschmano}, \cite{khoutaibi2020null}, and
\cite{maniar2017null}. The case of fully discrete parabolic equations is treated
in \cite{LecaMoraPerZam}. For insensitizing control problems of such equations, we refer to \cite{Bouchomanouk25etman,zhaYinGao}, and to \cite{morsancarr2026}, where insensitizing control is studied using a sentinel that also involves the tangential gradient. In addition, the literature on controllability for deterministic partial differential equations is vast, and it is not possible to provide an exhaustive list here; for a comprehensive account of this topic, we refer the reader to the monographs \cite{coron07,Zabczyk} and the references therein.

The controllability of \emph{stochastic} parabolic equations with various static boundary conditions has been extensively studied. In particular, controllability results have been established for Dirichlet, Neumann, and Robin boundary conditions. For the case of Dirichlet boundary conditions, \cite{barbu2003carleman} was among the first works to obtain partial controllability results for forward stochastic parabolic equations with a localized control acting in the drift term, under rather restrictive assumptions on the system coefficients. Subsequently, \cite{tang2009null} substantially extended these results by means of global Carleman estimates, addressing the controllability of more general forward stochastic parabolic equations. In that work, an additional control acting on the diffusion term was introduced, and the analysis was carried out under the assumption that the zero-order coefficients are bounded, adapted, and may depend on space, time, and random variables. We also refer to \cite{FuuLiu}, where a general weighted identity for certain stochastic operators with complex principal parts is derived including stochastic complex Ginzburg--Landau operators, stochastic heat operators, and stochastic Schr\"odinger operators, and subsequently applied to study the controllability of several classes of stochastic partial differential equations. The work \cite{Preprintelgrou23} further investigates the controllability of more general stochastic parabolic equations involving both zero- and first-order terms, under the assumption that all coefficients are bounded and adapted. In \cite{Preprintelgrou23,tang2009null}, controllability results are obtained by introducing a second control acting on the diffusion term over the entire spatial domain. In contrast, in \cite{lu2011some,observineqback}, the Lebeau--Robbiano strategy is employed to study both approximate and null controllability of the forward stochastic heat equation with a single control acting in the drift term, under the assumption of space-independent coefficients. In addition, the controllability of certain classes of stochastic parabolic equations with Robin boundary conditions has been addressed in \cite{RobinBCelg, yan2018carleman}. For null controllability results concerning stochastic semidiscrete parabolic equations, we refer the reader to \cite{Zhaou2025}.

Regarding controllability results for stochastic parabolic equations with dynamic boundary conditions, we refer to \cite{elgrou22SPEwithDBC} for forward and backward stochastic parabolic equations involving only zero-order terms, to \cite{Preprintelgrou231d} for a class of one-dimensional stochastic heat equations, to \cite{elgroubacconvdbc} for backward stochastic parabolic equations incorporating both reaction and convection terms, and to \cite{withouextra} for the forward stochastic heat equation with a time--space localized control acting in the drift term of the bulk equation, without additional forcing terms. For controllability results concerning certain coupled forward--backward stochastic parabolic equations, we refer the reader to \cite{liu2014global,mulobjdynbc25}.

Following the classical duality approach, the controllability of linear stochastic partial differential equations, as studied in the aforementioned works, can be established by analyzing an observability problem for the corresponding adjoint equation. This approach relies on unique continuation properties to achieve approximate controllability and on observability inequalities to study null controllability and insensitizing control problems. A fundamental tool for proving such observability results—both for parabolic PDEs and SPDEs—is the use of Carleman estimates, which provide weighted energy estimates with sufficiently large parameters to absorb lower-order terms. Introduced by Carleman in 1939 to establish uniqueness results for second-order elliptic equations (see \cite{Carl39}), these estimates have played a crucial role in the study of control and inverse problems in the deterministic setting (see, e.g., \cite{Yamam2009invePrb}). More recently, in the stochastic setting, Carleman estimates have also become a central tool for studying control theory (see, e.g., \cite{lu2021mathematical}) as well as for analyzing inverse problems (see, for instance, \cite{luWang2024InvProSPDES,luZhan2024InvProSPDES}).

For insensitizing control problems for certain classes of stochastic parabolic equations, we refer the reader to the following works: the forward stochastic heat equation is studied in \cite{yansun2011}, backward stochastic heat equations in \cite{liu2014global}, and forward stochastic degenerate parabolic equations in \cite{liu2019carleman}. Insensitizing control problems for a class of stochastic forward parabolic equations with dynamic boundary conditions are considered in \cite{barboelouk}. The stochastic Kuramoto--Sivashinsky equation is treated in \cite{luliu2025}. Against this background, the present paper investigates, for the first time, insensitizing control problems for forward anisotropic  stochastic reaction--convection--diffusion equations with general dynamic boundary conditions.

To the best of our knowledge, this work is the first to address the null controllability and an insensitizing control problem for forward anisotropic  stochastic parabolic equations with general dynamic boundary conditions, including both reaction and convection terms. Furthermore, we provide the explicit estimate \eqref{nulconcost} for the null controllability cost of \eqref{CE}, expressed in terms of the final control time \(T\) and the problem data. The cost of approximate controllability of \eqref{CE} can also be considered; however, it is left open in the present paper. For related results on the deterministic heat equation, we refer the reader to \cite{Bouchomanouk21,fernandezzuaua2000secco}.

In equation \eqref{1.4}, the control functions \( u \), \( v_1 \), and \( v_2 \) are introduced to influence the evolution of the system and, in particular, its behavior at the final time \( T \). The conormal derivative \( \partial_\nu^{\mathcal{A}} y \) acts as the coupling term between the bulk equation in \( Q \) and the surface equation on \( \Sigma \). Consequently, the distributed control \( u \) directly affects the drift term of the bulk equation on the interior subdomain \( G_0 \), while the controls \( v_1 \) and \( v_2 \) act on the diffusion terms of the bulk and surface equations, respectively, over the entire spatial domain. Through the coupling term \( \partial_\nu^{\mathcal{A}} y \), the controls \( u \), \( v_1 \), and \( v_2 \) also indirectly influence the drift term of the surface equation. This interaction between bulk and boundary dynamics is a key mechanism that allows us to study the null controllability of \eqref{CE} and the insensitizing control problem \eqref{inspb}. By contrast, investigate these control problems using only the control \( u \), which acts exclusively on the drift term of the bulk equation, remains a challenging open problem; for partial results in this direction under the additional assumption of space-independent coefficients, we refer the reader to \cite{withouextra,lu2011some}.

In equation \eqref{1.4}, the control functions \(u\), \(v_1\), and \(v_2\) are introduced to influence the evolution of the system, and in particular its behavior at the final time \(T\). The conormal derivative \(\partial_\nu^{\mathcal{A}} y\) acts as the coupling term between the bulk equation in \(Q\) and the surface equation on \(\Sigma\). Consequently, the distributed control \(u\) directly affects the drift term of the bulk equation on the interior subdomain \(G_0\), while the controls \(v_1\) and \(v_2\) act on the diffusion terms of the bulk and surface equations, respectively, over the entire spatial domain. Through the coupling term \(\partial_\nu^{\mathcal{A}} y\), the controls \(u\), \(v_1\), and \(v_2\) also indirectly influence the drift term of the surface equation. This interaction between bulk and boundary dynamics is a key mechanism that allows us to study the null controllability of \eqref{CE} and the insensitizing control problem \eqref{inspb}. By contrast, investigating these control problems using only the control \(u\), which acts exclusively on the drift term of the bulk equation, remains a challenging open problem.

The Carleman estimate established in \cite[Theorem~4.1]{elgrou22SPEwithDBC} for backward stochastic parabolic equations, obtained via a weighted identity method for \eqref{1.1gen} with drift terms in $\mathbb{L}^2$ (that is, $F_j = {F_\Gamma}_j = 0$ for all $j = 1,2,\dots,N$), is not sufficient to address the control problems considered in this paper. In particular, it cannot be applied to the null controllability problem studied in Section~\ref{sec3}, nor to the insensitizing control problem analyzed in Section~\ref{sec6}. To overcome this limitation, an observability inequality for the associated adjoint problem with drift terms in negative-order Sobolev spaces is required; such an estimate is not provided by \cite[Theorem~4.1]{elgrou22SPEwithDBC}. For this reason, we derive and employ the Carleman estimate \eqref{1.3} in the present work, which enables us to solve the null controllability problem in Section~\ref{sec3}. Furthermore, in Section~\ref{sec6}, we reduce the insensitizing control problem \eqref{inspb} to a null controllability problem for an optimality cascade forward-backward system. This system involves zeroth-, first-, and second-order coupling terms in the bulk--surface equations. The first- and second-order (weak) coupling terms are of the form
\[
\nabla\cdot(zB_1),\qquad
\nabla_\Gamma \cdot \bigl(z_\Gamma B_2 - \mathbbm{1}_{\mathcal{O}_\Gamma^2}\nabla_\Gamma y_\Gamma \bigr),
\]
and their treatment requires the use of the Carleman estimate \eqref{1.3} to derive a new Carleman estimate for the considered adjoint coupled system (see Proposition~\ref{carlfor insencontro}).
\\

Now, some remarks are in order.

\begin{rmk}\label{remark1.2}
In the derivation of the Carleman estimate \eqref{1.3} for \eqref{1.1gen}, the
boundary vector fields
\[
\mathcal{A}_\Gamma \nabla_\Gamma z_\Gamma
= \Bigl(\sum_{k=1}^N a^{jk}_\Gamma D_k z_\Gamma \Bigr)_{1 \le j \le N}
\quad \textnormal{and} \quad
F_\Gamma = \bigl({F_\Gamma}_j\bigr)_{1 \le j \le N}
\]
are assumed to be tangential to the manifold \( \Gamma \). When this tangentiality condition is not satisfied, one must instead employ the following \emph{generalized tangential divergence formula} (see \cite[Proposition~5.4.9]{herpi18}):
\begin{align}\label{gendiveform}
\int_\Gamma (\nabla_\Gamma \cdot f_\Gamma)\, z_\Gamma \, d\sigma
= -\int_\Gamma f_\Gamma \cdot \nabla_\Gamma z_\Gamma \, d\sigma
+ \int_\Gamma (\nabla_\Gamma \cdot \nu)\, (f_\Gamma \cdot \nu)\, z_\Gamma \, d\sigma.
\end{align}
The additional term in the right-hand side of \eqref{gendiveform}, involving the normal component
\( f_\Gamma \cdot \nu \), generates extra boundary contributions in the Carleman estimate. These terms cannot be controlled by the techniques developed in the
present paper and, in particular, we do not know how to absorb them into the left-hand side of the Carleman inequality. As a consequence, the analysis of non-tangential boundary vector fields within the framework of Carleman estimates for equation \eqref{1.1gen} remains unsolved in this paper.
\end{rmk}

\begin{rmk}
In equation \eqref{CE}, the controls \(v_1\) and \(v_2\) act on the diffusion terms in the whole domain \(G\) and on its boundary \(\Gamma\), respectively. This allows the controllability results for \eqref{CE} to be extended to systems where the diffusion terms depend on the state and its gradients. Specifically, the terms \(v_1\,dW(t)\) and \(v_2\,dW(t)\) can be replaced by 
\[
(b_1 y + \widetilde{B}_1\cdot\nabla y + v_1)\,dW(t) \quad \textnormal{and} \quad 
(b_2 y_\Gamma + \widetilde{B}_2\cdot\nabla_\Gamma y_\Gamma + v_2)\,dW(t),
\]
where \(b_i\) and \(\widetilde{B}_i\), \(i=1,2\), are bounded, adapted stochastic processes. Indeed, if system \eqref{CE} is null controllable, then there exists a control triple $(u, v_1, v_2)$ such that
\[
(y(T, \cdot), y_\Gamma(T, \cdot)) = (0, 0) \quad \textnormal{in } G \times \Gamma, \quad \textnormal{a.s.}
\]
Defining the new control triple
\[
u^\star = u, \qquad
v_1^\star = v_1 - (b_1 y + \widetilde{B}_1 \cdot \nabla y), \qquad
v_2^\star = v_2 - (b_2 y_\Gamma + \widetilde{B}_2 \cdot \nabla_\Gamma y_\Gamma),
\]
one obtains null controllability for the extended system
\begin{equation*}
\begin{cases}
dy - \nabla\cdot(\mathcal{A}\nabla y) \,dt = (a_1 y + B_1 \cdot \nabla y + \mathbbm{1}_{G_0} u^\star) \,dt + (b_1 y + \widetilde{B}_1 \cdot \nabla y + v_1^\star) \,dW(t), & \textnormal{in } Q,\\[1mm]
dy_\Gamma - \nabla_\Gamma \cdot (\mathcal{A}_\Gamma \nabla_\Gamma y_\Gamma) \,dt + \partial_\nu^\mathcal{A} y \,dt = (a_2 y_\Gamma + B_2 \cdot \nabla_\Gamma y_\Gamma) \,dt + (b_2 y_\Gamma + \widetilde{B}_2 \cdot \nabla_\Gamma y_\Gamma + v_2^\star) \,dW(t), & \textnormal{on } \Sigma,\\
y_\Gamma = y|_\Gamma, & \textnormal{on } \Sigma,\\
(y, y_\Gamma)|_{t=0} = (y_0, y_{\Gamma,0}), & \textnormal{in } G \times \Gamma.
\end{cases}
\end{equation*}
\end{rmk}

\begin{rmk}
The surface diffusion term ``\( \displaystyle\sum_{j,k=1}^N 
D_j\Big(a_\Gamma^{jk}(x) D_k z_\Gamma\Big) \, dt \)'' in the boundary equation of \eqref{1.1gen} plays a crucial role in the derivation of the Carleman estimate \eqref{1.3}, in particular in Step~2 of the proof of Lemma~\ref{lm2.2}. This term is essential for absorbing the following problematic boundary integral:
\[
\lambda \mu \, \mathbb{E} \iint_\Sigma \theta^2 \varphi \, \bigl| \partial_\nu^\mathcal{A} \psi \bigr| \, \bigl| \nabla_\Gamma z_\Gamma \bigr|^2 \, d\sigma \, dt.
\]
Consequently, the null and insensitizing control problems for \eqref{1.4} under the dynamic boundary condition
\begin{equation*}
\begin{cases}
dy_\Gamma + \partial_\nu^\mathcal{A} y \, dt = (a_2 y_\Gamma + B_2 \cdot \nabla_\Gamma y_\Gamma) \, dt + v_2 \, dW(t), & \textnormal{on } \Sigma,\\
y_\Gamma = y|_\Gamma, & \textnormal{on } \Sigma,
\end{cases}
\end{equation*}
remains an open problem. In contrast, for \(N = 1\), it is known that the controllability of stochastic heat equations can be addressed under this type of boundary condition; see \cite{Preprintelgrou231d}. This is due to the fact that, when \(N=1\), the boundary \(\Gamma\) is a zero-dimensional manifold, and hence the tangential gradient operator is trivial, i.e., \(\nabla_\Gamma = 0\).
\end{rmk}

\begin{rmk}
Let $\mathcal{O} \subset G$ and $\mathcal{O}_\Gamma^1, \mathcal{O}_\Gamma^2 \subset \Gamma$ be nonempty open subsets. The insensitizing control problem \eqref{inspb} with the energy functional
\begin{align*}
\begin{aligned}
    \widetilde{\mathcal{E}}(y,y_\Gamma)
    :=&\; \frac{1}{2}\,\mathbb{E}\int_0^T\int_{\mathcal{O}} |\nabla y|^2 \,dx\,dt 
       + \frac{1}{2}\,\mathbb{E}\int_0^T\int_{\mathcal{O}^1_\Gamma} |y_\Gamma|^2 \,d\sigma\,dt \\
    &\; + \frac{1}{2}\,\mathbb{E}\int_0^T\int_{\mathcal{O}^2_\Gamma} |\nabla_\Gamma y_\Gamma|^2 \,d\sigma\,dt
\end{aligned}
\end{align*}
poses significant challenges. In particular, the Carleman estimate \eqref{1.3} is not sufficient to handle this functional. The main difficulty lies in the need to control a localized term
$\mathbb{E}\int_0^T\int_{\widetilde{\mathcal{G}}} \kappa\, |z|^2 \,dx\,dt$ using a higher-order term
$\mathbb{E}\int_0^T\int_\mathcal{G} \kappa\, |\Delta z|^2 \,dx\,dt,$
where $\widetilde{\mathcal{G}}\subset \mathcal{G}$ and $\kappa=\kappa(t,x)$ is a suitable weight function. A detailed discussion of the deterministic heat equation can be found in \cite{Guerrero2007}.
\end{rmk}

This paper is organized as follows. Section~\ref{sec2SEC} presents some preliminaries and establishes the well-posedness of equation \eqref{1.1gen}. In Section~\ref{sec3SEC}, we derive an intermediate Carleman estimate for equation \eqref{1.1gen} in the absence of weak divergence terms. Section~\ref{sec2} is devoted to the derivation of the main Carleman estimate stated in Theorem~\ref{thm1.1} for equation \eqref{1.1gen}, where weak divergence terms are taken into account. Section~\ref{sec3} addresses the derivation of the required observability inequality for the adjoint backward equation associated with \eqref{CE}, and the proof of the null controllability result stated in Theorem~\ref{thmm1.2null}. Finally, Section~\ref{sec6} is devoted to the study of the insensitizing control problem \eqref{inspb} and to the proof of Theorem~\ref{thmm1.3ins}.

\section{Preliminaries and Well-Posedness Results}\label{sec2SEC}
In this section, we present some preliminaries that will be used throughout the paper and establish the well-posedness results for equations \eqref{1.1gen} (equivalently \eqref{1.1}).

We first recall the following lemma, which allows us to use integration by parts in space, both in the bulk domain \(G\) and on the boundary \(\Gamma\). For the proofs, we refer, for instance, to \cite{evans,KhmMRH22}.
\begin{lm}\label{lm1.2}
Let \(F \in L^2(G; \mathbb{R}^N)\) and \(F_\Gamma \in L^2(\Gamma; T_x\Gamma)\). The divergence of \(F\) (resp., the tangential divergence of \(F_\Gamma\)) can be extended as a continuous linear functional on \(H^1(G)\) (resp., on \(H^1(\Gamma)\)) defined by
\begin{equation*}
\nabla \cdot F : H^1(G) \longrightarrow \mathbb{R}, 
\quad v \longmapsto -\int_G F \cdot \nabla v \, dx
+ \langle F \cdot \nu, v|_\Gamma \rangle_{H^{-1/2}(\Gamma),\, H^{1/2}(\Gamma)},
\end{equation*}
and
\begin{equation*}
\nabla_\Gamma \cdot F_\Gamma : H^1(\Gamma) \longrightarrow \mathbb{R},
\quad v_\Gamma \longmapsto -\int_\Gamma F_\Gamma \cdot \nabla_\Gamma v_\Gamma \, d\sigma.
\end{equation*}
Moreover, the following estimates hold:
\begin{align*}
\left|
\langle \nabla \cdot F, v \rangle_{(H^1(G))',\, H^1(G)}
\right|
&\le C_1 \|F\|_{L^2(G; \mathbb{R}^N)} \|v\|_{H^1(G)},
\\
\left|
\langle \nabla_\Gamma \cdot F_\Gamma, v_\Gamma \rangle_{H^{-1}(\Gamma),\, H^1(\Gamma)}
\right|
&\le C_2 \|F_\Gamma\|_{L^2(\Gamma; \mathbb{R}^N)} \|v_\Gamma\|_{H^1(\Gamma)},
\end{align*}
for all \(v \in H^1(G)\) and \(v_\Gamma \in H^1(\Gamma)\), and for some positive constants \(C_1\) and \(C_2\).
\end{lm}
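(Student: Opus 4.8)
The plan is to treat the two functionals independently, since each is built from its own integration-by-parts identity, and in both cases the continuity bound is read off by combining a Cauchy--Schwarz estimate on a volume/surface integral with the trace theorem. I would dispose of the tangential divergence first, as it is the cleaner of the two: because $\Gamma$ is a compact manifold \emph{without} boundary, the associated divergence identity produces no boundary contribution at all.

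For the surface operator, fix $F_\Gamma \in L^2(\Gamma;\mathbb{R}^N)$ and $\textbf{v}_\Gamma \in H^1(\Gamma)$ and set $\langle \nabla_\Gamma \cdot F_\Gamma, \textbf{v}_\Gamma\rangle := -\int_\Gamma F_\Gamma \cdot \nabla_\Gamma \textbf{v}_\Gamma\, d\sigma$. This is manifestly linear in $\textbf{v}_\Gamma$, and for smooth tangential fields it agrees with the classical tangential divergence by the divergence formula recalled in Remark~1.1 (with no boundary term, since $\Gamma$ is closed). Cauchy--Schwarz gives $|\langle \nabla_\Gamma \cdot F_\Gamma, \textbf{v}_\Gamma\rangle| \le \|F_\Gamma\|_{L^2(\Gamma;\mathbb{R}^N)}\,\|\nabla_\Gamma \textbf{v}_\Gamma\|_{L^2(\Gamma;\mathbb{R}^N)} \le \|F_\Gamma\|_{L^2(\Gamma;\mathbb{R}^N)}\,\|\textbf{v}_\Gamma\|_{H^1(\Gamma)}$, so the functional is bounded with $C_2 = 1$. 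This settles the second estimate completely.

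For the bulk operator I would define, for $F \in L^2(G;\mathbb{R}^N)$ and $\textbf{v} \in H^1(G)$, the value $\langle \nabla\cdot F, \textbf{v}\rangle := -\int_G F\cdot\nabla \textbf{v}\,dx + \langle F\cdot\nu, \textbf{v}|_\Gamma\rangle_{H^{-1/2}(\Gamma),H^{1/2}(\Gamma)}$, which for smooth $F$ is precisely the Gauss--Green identity for $\int_G(\nabla\cdot F)\,\textbf{v}\,dx$; linearity in $\textbf{v}$ is immediate. The volume term is controlled directly by Cauchy--Schwarz, $|\int_G F\cdot\nabla \textbf{v}\,dx| \le \|F\|_{L^2(G;\mathbb{R}^N)}\,\|\textbf{v}\|_{H^1(G)}$. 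For the surface pairing I would invoke the duality estimate $|\langle F\cdot\nu, \textbf{v}|_\Gamma\rangle| \le \|F\cdot\nu\|_{H^{-1/2}(\Gamma)}\,\|\textbf{v}|_\Gamma\|_{H^{1/2}(\Gamma)}$ together with the trace theorem $\|\textbf{v}|_\Gamma\|_{H^{1/2}(\Gamma)} \le C\,\|\textbf{v}\|_{H^1(G)}$, and then collect the two contributions into a single constant $C_1$ depending on $G$ through the trace inequality and the continuity of the normal trace.

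The step I expect to be the main obstacle is exactly this surface pairing $\langle F\cdot\nu, \textbf{v}|_\Gamma\rangle$. For a field that is merely in $L^2(G;\mathbb{R}^N)$ the normal component $F\cdot\nu$ has no a priori sense as an element of $H^{-1/2}(\Gamma)$, so the delicate point is to give this term a rigorous meaning and to keep its bound inside the stated constant. I would handle it through the (generalized) normal-trace construction, interpreting $\langle F\cdot\nu, \textbf{v}|_\Gamma\rangle$ via the very Gauss--Green identity that defines the functional, so that the pairing is read as the boundary defect between $-\int_G F\cdot\nabla \textbf{v}$ and the distributional action of $\nabla\cdot F$; the continuity of this normal trace, combined with the trace embedding above, is what produces $C_1$ and forces its dependence on $G$, in sharp contrast to the boundaryless surface case where $C_2 = 1$. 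This is the part of the argument I would write most carefully, and it is where the references \cite{evans,KhmMRH22} would be used.
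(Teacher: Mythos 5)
Your treatment of the tangential part is correct and complete: since $\Gamma$ is a closed manifold, Cauchy--Schwarz alone yields the second estimate with $C_2=1$, which is all the paper's statement requires (note the paper gives no proof of Lemma \ref{lm1.2}, only the citation to \cite{evans,KhmMRH22}, so your surface argument is exactly the standard one). The genuine gap is in the bulk part, at precisely the step you flag. The duality bound you invoke would need $\|F\cdot\nu\|_{H^{-1/2}(\Gamma)} \leq C\,\|F\|_{L^2(G;\mathbb{R}^N)}$, and this inequality is false even for smooth fields: on the unit ball take $F_n(x)=\chi_n(|x|)\,x/|x|$ with $\chi_n$ a smooth cutoff equal to $1$ on a shell of width $1/n$ at the boundary and vanishing inside, so that $\|F_n\|^2_{L^2(G;\mathbb{R}^N)} \leq C/n \to 0$ while $F_n\cdot\nu\equiv 1$ on $\Gamma$; testing with $\textbf{v}\equiv 1$, the functional in the lemma equals $|\Gamma|>0$, which violates the claimed bound $C_1\|F_n\|_{L^2}\|\textbf{v}\|_{H^1}\to 0$. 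The normal trace is continuous only on $H(\mathrm{div};G)$, where it costs $\|F\|_{L^2}+\|\nabla\cdot F\|_{L^2}$; for $F$ merely in $L^2(G;\mathbb{R}^N)$ it does not exist as an element of $H^{-1/2}(\Gamma)$. Your fallback --- reading the pairing as the ``boundary defect'' between $-\int_G F\cdot\nabla\textbf{v}\,dx$ and the distributional action of $\nabla\cdot F$ --- is circular and produces no estimate: for $F\in L^2$ the distributional divergence acts only on $H^1_0(G)$, so there is no independently defined action on $\textbf{v}\in H^1(G)$ against which a defect could be measured.

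The way the lemma is actually meant, and used, in the paper resolves this differently: the symbol $\langle F\cdot\nu,\cdot\rangle_{H^{-1/2}(\Gamma),H^{1/2}(\Gamma)}$ is a bookkeeping device that always occurs together with the explicit source $-F\cdot\nu$ in the surface equation of \eqref{1.1}, so the two pairings cancel identically when the bulk and surface equations are tested with $(\textbf{v},\textbf{v}|_\Gamma)\in\mathbb{H}^1$; this cancellation is visible in passing from \eqref{3.12} and \eqref{3.13} to \eqref{3.12inte}, and in \eqref{3.9} only the terms $-\int_G F\cdot\nabla\widehat{y}\,dx$ and $-\int_\Gamma F_\Gamma\cdot\nabla_\Gamma\widehat{y}_\Gamma\,d\sigma$ survive. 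The rigorous content of the first estimate is therefore that $\textbf{v}\mapsto -\int_G F\cdot\nabla\textbf{v}\,dx$ is bounded on $H^1(G)$ by $\|F\|_{L^2(G;\mathbb{R}^N)}\|\textbf{v}\|_{H^1(G)}$, i.e.\ the combined functional on $\mathbb{H}^1$ is continuous with constant independent of any trace inequality; no normal-trace continuity is needed (nor available). If you insist on the pairing being a genuine $H^{-1/2}$--$H^{1/2}$ duality, you must either assume $F\in H(\mathrm{div};G)$ or restrict to $\textbf{v}\in H^1_0(G)$; as written, for general $F\in L^2(G;\mathbb{R}^N)$ your bound on the surface pairing cannot be closed.
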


We now present the following Itô formula for stochastic processes in weak form (see, e.g., \cite[Chapter~2]{lu2021mathematical}). This result will be useful for deriving energy estimates, as well as for establishing duality relations between forward and backward equations with general dynamic boundary conditions and drift terms in the $\mathbb{H}^{-1}$-Sobolev space.
\begin{lm}\label{lm1.1}
Let $(z,z_\Gamma), (y,y_\Gamma) \in L^2_\mathcal{F}(0,T; \mathbb{H}^1)$,
$(z_T,z_{\Gamma,T}) \in L^2_{\mathcal{F}_T}(\Omega; \mathbb{L}^2)$,
$(y_0,y_{\Gamma,0}) \in L^2_{\mathcal{F}_0}(\Omega; \mathbb{L}^2)$,
$(\phi,\phi_\Gamma), (\widetilde{\phi},\widetilde{\phi}_\Gamma)
\in L^2_\mathcal{F}(0,T; \mathbb{H}^{-1})$, and
$(\Psi,\widehat{\Psi}), (Z,\widehat{Z}) \in L^2_\mathcal{F}(0,T; \mathbb{L}^2)$.
Assume that, for all $t \in [0,T]$, the processes $(z,z_\Gamma; Z,\widehat{Z})$
and $(y,y_\Gamma)$ satisfy the following equations in $G \times \Gamma$,
respectively:
\begin{align*}
(z(t),z_\Gamma(t)) &= (z_T,z_{\Gamma,T})
- \int_t^T (\phi(s),\phi_\Gamma(s)) \, ds
- \int_t^T (Z(s),\widehat{Z}(s)) \, dW(s),
\quad \textnormal{a.s.}, \\
(y(t),y_\Gamma(t)) &= (y_0,y_{\Gamma,0})
+ \int_0^t (\widetilde{\phi}(s),\widetilde{\phi}_\Gamma(s)) \, ds
+ \int_0^t (\Psi(s),\widehat{\Psi}(s)) \, dW(s),
\quad \textnormal{a.s.}
\end{align*}
Then the following assertions hold:
\begin{enumerate}[1.]
\item For any $t \in [0,T]$, it holds almost surely that
\begin{align*}
\| (z(t),z_\Gamma(t)) \|^2_{\mathbb{L}^2}
&= \| (z(0),z_\Gamma(0)) \|^2_{\mathbb{L}^2}
+ 2 \int_0^t
\langle (\phi(s),\phi_\Gamma(s)), (z(s),z_\Gamma(s))
\rangle_{\mathbb{H}^{-1},\mathbb{H}^{1}} \, ds \\
&\quad + 2 \int_0^t
\langle (z(s),z_\Gamma(s)), (Z(s),\widehat{Z}(s))
\rangle_{\mathbb{L}^2} \, dW(s)
+ \int_0^t
\| (Z(s),\widehat{Z}(s)) \|^2_{\mathbb{L}^2} \, ds.
\end{align*}

\item For all $t \in [0,T]$, it holds almost surely that
\begin{align*}
\langle (z(t),z_\Gamma(t)), (y(t),y_\Gamma(t))
\rangle_{\mathbb{L}^2}
&= \langle (z(0),z_\Gamma(0)), (y_0,y_{\Gamma,0})
\rangle_{\mathbb{L}^2} \\
&\quad + \int_0^t
\langle (\phi(s),\phi_\Gamma(s)), (y(s),y_\Gamma(s))
\rangle_{\mathbb{H}^{-1},\mathbb{H}^1} \, ds \\
&\quad + \int_0^t
\langle (\widetilde{\phi}(s),\widetilde{\phi}_\Gamma(s)),
(z(s),z_\Gamma(s))
\rangle_{\mathbb{H}^{-1},\mathbb{H}^1} \, ds \\
&\quad + \int_0^t
\langle (y(s),y_\Gamma(s)), (Z(s),\widehat{Z}(s))
\rangle_{\mathbb{L}^2} \, dW(s) \\
&\quad + \int_0^t
\langle (z(s),z_\Gamma(s)), (\Psi(s),\widehat{\Psi}(s))
\rangle_{\mathbb{L}^2} \, dW(s) \\
&\quad + \int_0^t
\langle Z(s), \Psi(s) \rangle_{\mathbb{L}^2} \, ds.
\end{align*}
\end{enumerate}
\end{lm}
In the above lemma, $\langle \cdot, \cdot \rangle_{\mathbb{L}^2}$ denotes the
inner product in the space $\mathbb{L}^2$, while
$\langle \cdot, \cdot \rangle_{\mathbb{H}^{-1},\mathbb{H}^1}$ denotes the duality
pairing between $\mathbb{H}^1$ and its dual $\mathbb{H}^{-1}$, taken with respect
to the pivot space $\mathbb{L}^2$.

We first consider the following unbounded linear operator on the product space $\mathbb{L}^2$:
\[
\textbf{A} =
\begin{pmatrix}
\nabla \cdot (\mathcal{A} \nabla) & 0 \\
- \partial_\nu^{\mathcal{A}} & \nabla_\Gamma \cdot (\mathcal{A}_\Gamma \nabla_\Gamma)
\end{pmatrix},
\qquad
\mathcal{D}(\textbf{A}) = \mathbb{H}^2 .
\]

Following ideas similar to those in the proof of Proposition~2.1 in \cite{maniar2017null}, one can show that $\textbf{A}$ satisfies the following generation property.

\begin{prop}\label{prop2.1}
The operator $\textbf{A}$ is densely defined, self-adjoint, and dissipative on $\mathbb{L}^2$, and it generates an analytic $C_0$-semigroup $(S(t))_{t \ge 0}$ on $\mathbb{L}^2$. Moreover, we have 
$\mathcal{D}((-\textbf{A})^{1/2}) = \mathbb{H}^1$.
\end{prop}

We now give the definition of weak solution of \eqref{1.1} (equivalently \eqref{1.1gen}).
\begin{df}
A process 
$$ (z,z_\Gamma;Z,\widehat{Z}) \in\left( L^2_\mathcal{F}(\Omega; C([0,T]; \mathbb{L}^2)) \bigcap L^2_\mathcal{F}(0,T; \mathbb{H}^1) \right) \times L^2_\mathcal{F}(0,T; \mathbb{L}^2) $$
is said to be a weak solution of \eqref{1.1} if, for any \( t \in [0,T] \) and all \( (\zeta,\zeta_\Gamma) \in \mathbb{H}^1 \), it holds that
\begin{align}\label{2.2backprow}
\begin{aligned}
&\int_G (z_T - z(t)) \zeta \, dx+\int_\Gamma (z_{\Gamma,T} - z_\Gamma(t)) \zeta_\Gamma \, d\sigma \\
&= \int_t^T \int_G \mathcal{A} \nabla z \cdot \nabla \zeta \, dx \, ds+\int_t^T \int_\Gamma \mathcal{A}_\Gamma \nabla_\Gamma z_\Gamma \cdot \nabla_\Gamma \zeta_\Gamma \, d\sigma \, ds \\
&\quad+ \int_t^T \int_G F_1 \zeta \, dx \, ds+ \int_t^T \int_\Gamma F_2 \zeta_\Gamma \, d\sigma \, ds \\
&\quad - \int_t^T \int_G F \cdot \nabla \zeta \, dx \, ds- \int_t^T \int_\Gamma F_\Gamma \cdot \nabla_\Gamma \zeta_\Gamma \, d\sigma \, ds\\
&\quad+ \int_t^T \int_G Z \zeta \, dx \, dW(s)+ \int_t^T \int_\Gamma \widehat{Z} \zeta_\Gamma \, d\sigma \, dW(s), \quad \textnormal{a.s.}
\end{aligned}
\end{align}
\end{df}

We first establish the following well-posedness result for equation \eqref{1.1} in the absence of divergence terms, that is, when $F = F_\Gamma = 0$.

\begin{prop}\label{prop1wsec}
Let \( (z_T,z_{\Gamma,T}) \in L^2_{\mathcal{F}_T}(\Omega; \mathbb{L}^2) \),
\( F_1 \in L^2_\mathcal{F}(0,T; L^2(G)) \), and
\( F_2 \in L^2_\mathcal{F}(0,T; L^2(\Gamma)) \).
Then equation \eqref{1.1} (with $F = F_\Gamma = 0$) admits a unique weak solution
\[
(z,z_\Gamma;Z,\widehat{Z})
\in \Big( L^2_\mathcal{F}(\Omega; C([0,T]; \mathbb{L}^2))
\bigcap L^2_\mathcal{F}(0,T; \mathbb{H}^1) \Big)
\times L^2_\mathcal{F}(0,T; \mathbb{L}^2).
\]
Moreover, the following estimate holds:
\begin{align}\label{202.3estimsecwdt}
\begin{aligned}
& \max_{0\leq t\leq T} \mathbb{E}\|(z(t),z_\Gamma(t))\|^2_{\mathbb{L}^2}
+ \|(z,z_\Gamma)\|^2_{L^2_\mathcal{F}(0,T; \mathbb{H}^1)}
+ \|(Z,\widehat{Z})\|^2_{L^2_\mathcal{F}(0,T; \mathbb{L}^2)} \\
& \leq C \Big(
\mathbb{E}\|(z_T,z_{\Gamma,T})\|^2_{\mathbb{L}^2}
+ \|F_1\|^2_{L^2_{\mathcal{F}}(0,T; L^2(G))}
+ \|F_2\|^2_{L^2_{\mathcal{F}}(0,T; L^2(\Gamma))}
\Big),
\end{aligned}
\end{align}
where \( C>0 \) is a constant depending only on \( G \), \( T \), $\beta_0$, $M$, and $M_\Gamma$.
\end{prop}

\begin{proof}
Observe that equation \eqref{1.1} can be rewritten in the following abstract backward stochastic evolution form:
\begin{equation*}
\begin{cases}
d\mathbf{Z} = (-\textbf{A}\mathbf{Z} + \mathbf{F})\, dt + \widehat{\mathbf{Z}}\, dW(t),
& t \in [0,T),\\
\mathbf{Z}(T) = (z_T, z_{\Gamma,T}),
\end{cases}
\end{equation*}
where \(\mathbf{Z} = (z,z_\Gamma)\),
\(\widehat{\mathbf{Z}} = (Z,\widehat{Z})\), and
\(\mathbf{F} = (F_1,F_2) \in L^2_\mathcal{F}(0,T; \mathbb{L}^2)\).
Applying Proposition~\ref{prop2.1} together with Theorem~4.11 in \cite{lu2021mathematical}, we obtain the existence, uniqueness, and the estimate \eqref{202.3estimsecwdt}.
\end{proof}

We next establish the well-posedness of the full system \eqref{1.1}, or equivalently, \eqref{1.1gen}.

\begin{prop}\label{prop1w}
Let \( (z_T,z_{\Gamma,T}) \in L^2_{\mathcal{F}_T}(\Omega; \mathbb{L}^2) \),
\( F_1 \in L^2_\mathcal{F}(0,T; L^2(G)) \),
\( F_2 \in L^2_\mathcal{F}(0,T; L^2(\Gamma)) \),
\( F \in L^2_\mathcal{F}(0,T; L^2(G; \mathbb{R}^N)) \),
and
\( F_\Gamma \in L^2_\mathcal{F}(0,T; L^2(\Gamma; \mathbb{R}^N)) \).
Then equation \eqref{1.1} admits a unique weak solution
\[
(z,z_\Gamma;Z,\widehat{Z})
\in \Big( L^2_\mathcal{F}(\Omega; C([0,T]; \mathbb{L}^2))
\bigcap L^2_\mathcal{F}(0,T; \mathbb{H}^1) \Big)
\times L^2_\mathcal{F}(0,T; \mathbb{L}^2).
\]
Moreover, the following estimate holds:
\begin{align}\label{202.3estimbackw}
\begin{aligned}
& \max_{0\leq t\leq T} \mathbb{E}\|(z(t),z_\Gamma(t))\|^2_{\mathbb{L}^2}
+ \|(z,z_\Gamma)\|^2_{L^2_\mathcal{F}(0,T; \mathbb{H}^1)}
+ \|(Z,\widehat{Z})\|^2_{L^2_\mathcal{F}(0,T; \mathbb{L}^2)} \\
& \leq C \Big(
\mathbb{E}\|(z_T,z_{\Gamma,T})\|^2_{\mathbb{L}^2}
+ \|F_1\|^2_{L^2_{\mathcal{F}}(0,T; L^2(G))}
+ \|F_2\|^2_{L^2_{\mathcal{F}}(0,T; L^2(\Gamma))} \\
&\qquad\qquad
+ \|F\|^2_{L^2_{\mathcal{F}}(0,T; L^2(G; \mathbb{R}^N))}
+ \|F_\Gamma\|^2_{L^2_{\mathcal{F}}(0,T; L^2(\Gamma; \mathbb{R}^N))}
\Big),
\end{aligned}
\end{align}
where \( C>0 \) depends only on \( G \), \( T \), $\beta_0$, $M$, and $M_\Gamma$.
\end{prop}

\begin{proof}
For the uniqueness, since the system is linear, we assume that
\[
(z,z_\Gamma;Z,\widehat{Z}) \in 
\Big(L^2_{\mathcal{F}}(\Omega; C([0,T]; \mathbb{L}^2))
\bigcap L^2_{\mathcal{F}}(0,T; \mathbb{H}^1)\Big)
\times L^2_{\mathcal{F}}(0,T; \mathbb{L}^2)
\]
is a weak solution to \eqref{1.1} with vanishing data, that is, with terminal condition $(z_T,z_{\Gamma,T})=(0,0)$ and source terms $F_1=F_2=F=F_\Gamma=0$. Fix \(t\in[0,T]\), applying Itô's formula as stated in Lemma~\ref{lm1.1} to
\(\|(z,z_\Gamma)\|_{\mathbb{L}^2}^2\), computing \(d_s\|(z,z_\Gamma)\|_{\mathbb{L}^2}^2\),
and integrating over the interval \(s\in(t,T]\), we obtain
\begin{align*}
&\mathbb{E}\int_G |z(t)|^2\,dx
+\mathbb{E}\int_\Gamma |z_\Gamma(t)|^2\,d\sigma \\
&= -2\mathbb{E}\int_t^T\int_G \mathcal{A}\nabla z\cdot\nabla z \,dx\,ds
   -2\mathbb{E}\int_t^T\int_\Gamma \mathcal{A}_\Gamma
   \nabla_\Gamma z_\Gamma\cdot\nabla_\Gamma z_\Gamma \,d\sigma\,ds \\
&\quad -\mathbb{E}\int_t^T\int_G |Z|^2 \,dx\,ds
   -\mathbb{E}\int_t^T\int_\Gamma |\widehat{Z}|^2 \,d\sigma\,ds .
\end{align*}
Recalling assumption~\eqref{asummAandAgama}, it follows that
\begin{align*}
&\mathbb{E}\int_G |z(t)|^2\,dx
+\mathbb{E}\int_\Gamma |z_\Gamma(t)|^2\,d\sigma
+\mathbb{E}\int_t^T\int_G |Z|^2 \,dx\,ds
+\mathbb{E}\int_t^T\int_\Gamma |\widehat{Z}|^2 \,d\sigma\,ds \\
&\leq -2\beta_0 \mathbb{E}\int_t^T\int_G |\nabla z|^2 \,dx\,ds
      -2\beta_0 \mathbb{E}\int_t^T\int_\Gamma
      |\nabla_\Gamma z_\Gamma|^2 \,d\sigma\,ds \\
&\leq 0.
\end{align*}
Consequently, we conclude that
\[
(z,z_\Gamma;Z,\widehat{Z}) \equiv (0,0;0,0),
\quad \textnormal{a.s.}
\]
This completes the proof of the uniqueness of solutions to system~\eqref{1.1}.

For the existence, the approach consists of decomposing the solution into two parts: 
the solution of an elliptic problem and the solution of a backward stochastic parabolic equation without the divergence terms, for which well-posedness is already known from Proposition~\ref{prop1wsec}. 
Subsequently, by employing a sequential argument, we establish the existence of a solution to \eqref{1.1gen}.  To this end, let 
\( (z_T,z_{\Gamma,T}) \in L^2_{\mathcal{F}_T}(\Omega; \mathbb{L}^2) \), 
\( F_1 \in L^2_\mathcal{F}(0,T; L^2(G)) \), 
\( F_2 \in L^2_\mathcal{F}(0,T; L^2(\Gamma)) \), 
and assume that \( (F,F_\Gamma) \in \mathscr{F} \), where
\begin{align*}
\mathscr{F} = \Big\{& (F,F_\Gamma)\in L^2_\mathcal{F}(\Omega; C^1([0,T]; L^2(G; \mathbb{R}^N)))\times L^2_\mathcal{F}(\Omega; C^1([0,T]; L^2(\Gamma; \mathbb{R}^N))): \\
&\quad(F(T,\cdot),F_\Gamma(T,\cdot)) = (0,0) \;\;\textnormal{in}\; G\times\Gamma, \;\textnormal{a.s.} \Big\},
\end{align*}
which is dense in 
\( L^2_\mathcal{F}(0,T; L^2(G; \mathbb{R}^N)) \times L^2_\mathcal{F}(0,T; L^2(\Gamma; \mathbb{R}^N)) \). Hence, the elliptic system
\begin{equation}\label{ellipteqq}
\begin{cases}
\nabla \cdot (\mathcal{A} \nabla h) = \nabla \cdot F & \text{in } Q,\\[1mm]
\nabla_\Gamma \cdot (\mathcal{A}_\Gamma \nabla_\Gamma h_\Gamma) - \partial_\nu^\mathcal{A} h = \nabla_\Gamma \cdot F_\Gamma - F\cdot\nu & \text{on } \Sigma,\\[1mm]
h_\Gamma = h|_\Gamma & \text{on } \Sigma,
\end{cases}
\end{equation}
admits a unique solution $(h,h_\Gamma) \in L^2_\mathcal{F}(\Omega; C^1([0,T]; \mathbb{H}^1))$ such that the terminal condition
\[
(h(T,\cdot),h_\Gamma(T,\cdot)) = (0,0) \quad \text{in } G\times \Gamma, \;\text{a.s.}
\]

Consider the following backward stochastic parabolic system with dynamic boundary conditions:
\begin{equation}\label{202.3withh}
\begin{cases}
\begin{array}{ll}
dy + \nabla\cdot(\mathcal{A}\nabla y) \, dt = (F_1 + \partial_t h) \, dt + Y \, dW(t) & \text{in }  Q,\\
dy_\Gamma + \nabla_\Gamma\cdot(\mathcal{A}_\Gamma\nabla_\Gamma y_\Gamma) \, dt - \partial_\nu^\mathcal{A}y \, dt = (F_2 + \partial_t h_\Gamma) \, dt + \widehat{Y} \, dW(t) & \text{on }  \Sigma, \\
y_\Gamma = y|_\Gamma & \text{on }  \Sigma, \\
(y(T),y_\Gamma(T)) = (y_T,y_{\Gamma,T}) & \text{in }  G\times\Gamma.
\end{array}
\end{cases}
\end{equation}
By Proposition \ref{prop1wsec}, the backward system \eqref{202.3withh} admits a unique weak solution
\[
(y,y_\Gamma;Y,\widehat{Y}) \in \mathcal{Z}_T \equiv \Big(L^2_\mathcal{F}(\Omega; C([0,T]; \mathbb{L}^2)) \bigcap L^2_\mathcal{F}(0,T; \mathbb{H}^1)\Big) \times L^2_\mathcal{F}(0,T; \mathbb{L}^2),
\]
which satisfies, for any \( t \in [0,T] \) and all \( (\zeta,\zeta_\Gamma) \in \mathbb{H}^1 \),
\begin{align}\label{ineq1qwel}
\begin{aligned}
&\int_G (y(T) - y(t)) \zeta \, dx + \int_\Gamma (y_{\Gamma,T} - y_\Gamma(t)) \zeta_\Gamma \, d\sigma\\
&=  \int_t^T \int_G \mathcal{A} \nabla y \cdot \nabla \zeta \, dx \, ds + \int_t^T \int_\Gamma \mathcal{A}_\Gamma \nabla_\Gamma y_\Gamma \cdot \nabla_\Gamma \zeta_\Gamma \, d\sigma \, ds \\
&\quad + \int_t^T \int_G (F_1 + \partial_s h) \zeta \, dx \, ds + \int_t^T \int_\Gamma (F_2 + \partial_s h_\Gamma) \zeta_\Gamma \, d\sigma \, ds \\
&\quad + \int_t^T \int_G Y \zeta \, dx \, dW(s) + \int_t^T \int_\Gamma \widehat{Y} \zeta_\Gamma \, d\sigma \, dW(s), \quad \text{a.s.}
\end{aligned}
\end{align}
In particular, integrating by parts in time the third and fourth terms on the right-hand side of \eqref{ineq1qwel}, we obtain the equivalent variational formulation
\begin{align}\label{equaverfbyh}
\begin{aligned}
&\int_G (y(T) - y(t)) \zeta \, dx + \int_\Gamma (y_{\Gamma,T} - y_\Gamma(t)) \zeta_\Gamma \, d\sigma\\
&=  \int_t^T \int_G \mathcal{A} \nabla y \cdot \nabla \zeta \, dx \, ds + \int_t^T \int_\Gamma \mathcal{A}_\Gamma \nabla_\Gamma y_\Gamma \cdot \nabla_\Gamma \zeta_\Gamma \, d\sigma \, ds \\
&\quad + \int_t^T \int_G F_1 \zeta \, dx \, ds - \int_G h(t) \zeta \, dx + \int_t^T \int_\Gamma F_2 \zeta_\Gamma \, d\sigma \, ds - \int_\Gamma h_\Gamma(t) \zeta_\Gamma \, d\sigma\\
&\quad + \int_t^T \int_G Y \zeta \, dx \, dW(s) + \int_t^T \int_\Gamma \widehat{Y} \zeta_\Gamma \, d\sigma \, dW(s), \quad \text{a.s.}
\end{aligned}
\end{align}
Set 
\[
(z,z_\Gamma;Z,\widehat{Z}) = (y,y_\Gamma;Y,\widehat{Y}) + (h,h_\Gamma;0,0) \quad \text{in } (Q\times\Sigma)^2, \;\text{a.s.}
\] 
It is straightforward to verify that $(z,z_\Gamma;Z,\widehat{Z}) \in \mathcal{Z}_T.$ Moreover, using \eqref{equaverfbyh}, we see that \( (z,z_\Gamma;Z,\widehat{Z}) \) satisfies
\begin{align}\label{equasaparticur1}
\begin{aligned}
&\int_G (z_T - z(t)) \zeta \, dx + \int_\Gamma (z_{\Gamma,T} - z_\Gamma(t)) \zeta_\Gamma \, d\sigma\\
&= \int_t^T \int_G \mathcal{A} \nabla z \cdot \nabla \zeta \, dx \, ds + \int_t^T \int_\Gamma \mathcal{A}_\Gamma \nabla_\Gamma z_\Gamma \cdot \nabla_\Gamma \zeta_\Gamma \, d\sigma \, ds \\
&\quad - \int_t^T \int_G \mathcal{A} \nabla h \cdot \nabla \zeta \, dx \, ds - \int_t^T \int_\Gamma \mathcal{A}_\Gamma \nabla_\Gamma h_\Gamma \cdot \nabla_\Gamma \zeta_\Gamma \, d\sigma \, ds\\
&\quad + \int_t^T \int_G F_1 \zeta \, dx \, ds + \int_t^T \int_\Gamma F_2 \zeta_\Gamma \, d\sigma \, ds\\
&\quad + \int_t^T \int_G Z \zeta \, dx \, dW(s) + \int_t^T \int_\Gamma \widehat{Z} \zeta_\Gamma \, d\sigma \, dW(s), \quad \text{a.s.}
\end{aligned}
\end{align}
Next, applying \eqref{ellipteqq} to the third and fourth terms on the right-hand side of \eqref{equasaparticur1}, we obtain
\begin{align}\label{equasaparticur}
\begin{aligned}
&\int_G (z_T - z(t)) \zeta \, dx + \int_\Gamma (z_{\Gamma,T} - z_\Gamma(t)) \zeta_\Gamma \, d\sigma\\
&= \int_t^T \int_G \mathcal{A} \nabla z \cdot \nabla \zeta \, dx \, ds + \int_t^T \int_\Gamma \mathcal{A}_\Gamma \nabla_\Gamma z_\Gamma \cdot \nabla_\Gamma \zeta_\Gamma \, d\sigma \, ds \\
&\quad + \int_t^T \int_G F_1 \zeta \, dx \, ds + \int_t^T \int_\Gamma F_2 \zeta_\Gamma \, d\sigma \, ds\\
&\quad - \int_t^T \int_G F \cdot \nabla \zeta \, dx \, ds - \int_t^T \int_\Gamma F_\Gamma \cdot \nabla_\Gamma \zeta_\Gamma \, d\sigma \, ds\\
&\quad + \int_t^T \int_G Z \zeta \, dx \, dW(s) + \int_t^T \int_\Gamma \widehat{Z} \zeta_\Gamma \, d\sigma \, dW(s), \quad \text{a.s.}
\end{aligned}
\end{align}
Therefore, \((z,z_\Gamma;Z,\widehat{Z})\) is a weak solution of \eqref{1.1} corresponding to \((F,F_\Gamma) \in \mathscr{F}\).

Fix \( t \in (0,T) \). Applying Itô's formula from Lemma \ref{lm1.1} to \( \|(z,z_\Gamma)\|^2_{\mathbb{L}^2} \) and integrating over \( s \in (t,T) \) followed by taking expectations, we obtain
\begin{align*}
\begin{aligned}
& \mathbb{E}\|(z(t),z_\Gamma(t))\|^2_{\mathbb{L}^2} - \mathbb{E}\|(z_T,z_{\Gamma,T})\|^2_{\mathbb{L}^2} \\
&= -2 \mathbb{E}\int_t^T \int_G \mathcal{A} \nabla z \cdot \nabla z \, dx \, ds
   -2 \mathbb{E}\int_t^T \int_\Gamma \mathcal{A}_\Gamma \nabla_\Gamma z_\Gamma \cdot \nabla_\Gamma z_\Gamma \, d\sigma \, ds  \\
&\quad -2 \mathbb{E}\int_t^T \int_G F_1 z \, dx \, ds
   -2 \mathbb{E}\int_t^T \int_\Gamma F_2 z_\Gamma \, d\sigma \, ds \\
&\quad +2 \mathbb{E}\int_t^T \int_G F \cdot \nabla z \, dx \, ds
   +2 \mathbb{E}\int_t^T \int_\Gamma F_\Gamma \cdot \nabla_\Gamma z_\Gamma \, d\sigma \, ds \\
&\quad - \mathbb{E}\int_t^T \int_G Z^2 \, dx \, ds
   - \mathbb{E}\int_t^T \int_\Gamma \widehat{Z}^2 \, d\sigma \, ds.
\end{aligned}
\end{align*}
Using the coercivity condition \eqref{asummAandAgama} and Young's inequality, for any \(\varepsilon>0\), we get
\begin{align}\label{ineqqforwe12}
\begin{aligned}
& \mathbb{E}\|(z(t),z_\Gamma(t))\|^2_{\mathbb{L}^2} - \mathbb{E}\|(z_T,z_{\Gamma,T})\|^2_{\mathbb{L}^2} \\
&\leq -2 \beta_0 \mathbb{E}\int_t^T \int_G |\nabla z|^2 \, dx \, ds
      -2 \beta_0 \mathbb{E}\int_t^T \int_\Gamma |\nabla_\Gamma z_\Gamma|^2 \, d\sigma \, ds \\
&\quad + \mathbb{E}\int_t^T \int_G z^2 \, dx \, ds + \mathbb{E}\int_t^T \int_G F_1^2 \, dx \, ds \\
&\quad + \mathbb{E}\int_t^T \int_\Gamma z_\Gamma^2 \, d\sigma \, ds + \mathbb{E}\int_t^T \int_\Gamma F_2^2 \, d\sigma \, ds \\
&\quad + \varepsilon \mathbb{E}\int_t^T \int_G |\nabla z|^2 \, dx \, ds + \frac{1}{\varepsilon} \mathbb{E}\int_t^T \int_G |F|^2 \, dx \, ds \\
&\quad + \varepsilon \mathbb{E}\int_t^T \int_\Gamma |\nabla_\Gamma z_\Gamma|^2 \, d\sigma \, ds + \frac{1}{\varepsilon} \mathbb{E}\int_t^T \int_\Gamma |F_\Gamma|^2 \, d\sigma \, ds \\
&\quad - \mathbb{E}\int_t^T \int_G Z^2 \, dx \, ds - \mathbb{E}\int_t^T \int_\Gamma \widehat{Z}^2 \, d\sigma \, ds.
\end{aligned}
\end{align}
Adding ``\( -2 \mathbb{E} \int_t^T \|(z,z_\Gamma)\|^2_{\mathbb{L}^2} \, ds \)'' to both sides of \eqref{ineqqforwe12} yields
\begin{align*}
\begin{aligned}
& \mathbb{E}\|(z(t),z_\Gamma(t))\|^2_{\mathbb{L}^2} - \mathbb{E}\|(z_T,z_{\Gamma,T})\|^2_{\mathbb{L}^2} - 2 \mathbb{E}\int_t^T \|(z,z_\Gamma)\|^2_{\mathbb{L}^2} \, ds \\
&\quad + \mathbb{E}\int_t^T \int_G Z^2 \, dx \, ds + \mathbb{E}\int_t^T \int_\Gamma \widehat{Z}^2 \, d\sigma \, ds \\
&\leq \max(-2\beta_0,-1) \, \mathbb{E}\int_t^T \|(z,z_\Gamma)\|^2_{\mathbb{H}^1} \, ds \\
&\quad + \mathbb{E}\int_t^T \int_G F_1^2 \, dx \, ds + \mathbb{E}\int_t^T \int_\Gamma F_2^2 \, d\sigma \, ds \\
&\quad + 2\varepsilon \, \mathbb{E}\int_t^T \|(z,z_\Gamma)\|^2_{\mathbb{H}^1} \, ds
       + \frac{1}{\varepsilon} \mathbb{E}\int_t^T \int_G |F|^2 \, dx \, ds
       + \frac{1}{\varepsilon} \mathbb{E}\int_t^T \int_\Gamma |F_\Gamma|^2 \, d\sigma \, ds.
\end{aligned}
\end{align*}
Choosing \( 0 < \varepsilon < \min(2\beta_0,1)/2 \), we deduce
\begin{align}\label{ineqq1w}
\begin{aligned}
& \mathbb{E}\|(z(t),z_\Gamma(t))\|^2_{\mathbb{L}^2} + \mathbb{E}\int_t^T \|(z,z_\Gamma)\|^2_{\mathbb{H}^1} \, ds
  + \mathbb{E}\int_t^T \|Z\|^2_{L^2(G)} \, ds + \mathbb{E}\int_t^T \|\widehat{Z}\|^2_{L^2(\Gamma)} \, ds \\
&\leq C \bigg[ \mathbb{E}\|(z_T,z_{\Gamma,T})\|^2_{\mathbb{L}^2} 
  + \mathbb{E}\int_t^T \int_G F_1^2 \, dx \, ds + \mathbb{E}\int_t^T \int_\Gamma F_2^2 \, d\sigma \, ds \\
&\qquad\quad + \mathbb{E}\int_t^T \int_G |F|^2 \, dx \, ds + \mathbb{E}\int_t^T \int_\Gamma |F_\Gamma|^2 \, d\sigma \, ds 
  + \mathbb{E}\int_t^T \|(z,z_\Gamma)\|^2_{\mathbb{L}^2} \, ds \bigg].
\end{aligned}
\end{align}
Applying Gronwall's inequality to the last term in \eqref{ineqq1w} gives
\begin{align*}
\begin{aligned}
\mathbb{E}\|(z(t),z_\Gamma(t))\|^2_{\mathbb{L}^2} 
&\leq C \bigg[ \mathbb{E}\|(z_T,z_{\Gamma,T})\|^2_{\mathbb{L}^2} 
 + \mathbb{E} \int_0^T \|F_1\|^2_{L^2(G)} \, ds + \mathbb{E} \int_0^T \|F_2\|^2_{L^2(\Gamma)} \, ds \\
&\qquad\quad + \mathbb{E} \int_0^T \|F\|^2_{L^2(G;\mathbb{R}^N)} \, ds 
       + \mathbb{E} \int_0^T \|F_\Gamma\|^2_{L^2(\Gamma;\mathbb{R}^N)} \, ds \bigg],
\end{aligned}
\end{align*}
which implies 
\begin{align}\label{ineqq2w}
\begin{aligned}
\mathbb{E}\int_t^T \|(z,z_\Gamma)\|^2_{\mathbb{L}^2} \, ds 
&\leq C \bigg[ \mathbb{E}\|(z_T,z_{\Gamma,T})\|^2_{\mathbb{L}^2} 
 + \mathbb{E} \int_0^T \|F_1\|^2_{L^2(G)} \, ds + \mathbb{E} \int_0^T \|F_2\|^2_{L^2(\Gamma)} \, ds \\
&\qquad\quad + \mathbb{E} \int_0^T \|F\|^2_{L^2(G;\mathbb{R}^N)} \, ds 
       + \mathbb{E} \int_0^T \|F_\Gamma\|^2_{L^2(\Gamma;\mathbb{R}^N)} \, ds \bigg].
\end{aligned}
\end{align}
Combining \eqref{ineqq1w} and \eqref{ineqq2w}, we finally obtain
\begin{align}\label{ineqq1wse}
\begin{aligned}
& \max_{0\le t\le T} \mathbb{E} \|(z(t),z_\Gamma(t))\|^2_{\mathbb{L}^2} 
 + \mathbb{E} \int_0^T \|(z,z_\Gamma)\|^2_{\mathbb{H}^1} \, ds
 + \mathbb{E} \int_0^T \|Z\|^2_{L^2(G)} \, ds + \mathbb{E} \int_0^T \|\widehat{Z}\|^2_{L^2(\Gamma)} \, ds \\
&\leq C \bigg[ \mathbb{E}\|(z_T,z_{\Gamma,T})\|^2_{\mathbb{L}^2} 
 + \mathbb{E} \int_0^T \|F_1\|^2_{L^2(G)} \, ds + \mathbb{E} \int_0^T \|F_2\|^2_{L^2(\Gamma)} \, ds \\
&\qquad\quad  + \mathbb{E} \int_0^T \|F\|^2_{L^2(G;\mathbb{R}^N)} \, ds 
       + \mathbb{E} \int_0^T \|F_\Gamma\|^2_{L^2(\Gamma;\mathbb{R}^N)} \, ds \bigg].
\end{aligned}
\end{align}

Let us now take \( (F,F_\Gamma) \in L^2_\mathcal{F}(0,T;L^2(G;\mathbb{R}^N)) \times L^2_\mathcal{F}(0,T;L^2(\Gamma;\mathbb{R}^N)) \). Then, there exists a sequence \( (F_n,F_{\Gamma,n}) \in \mathscr{F} \) such that
\begin{align}\label{converesultofdata}
    (F_n,F_{\Gamma,n}) \longrightarrow (F,F_\Gamma) \quad \text{in} \quad L^2_\mathcal{F}(0,T;L^2(G;\mathbb{R}^N)) \times L^2_\mathcal{F}(0,T;L^2(\Gamma;\mathbb{R}^N)) \quad \text{as } n \to \infty.
\end{align}
Since \( (F_n,F_{\Gamma,n}) \in \mathscr{F} \), for the given data \( ((z_T,z_{\Gamma,T}), F_1, F_2, F_n, F_{\Gamma,n}) \), the identity \eqref{equasaparticur} ensures that \eqref{1.1} admits a unique weak solution \( (z_n,z_{\Gamma,n};Z_n,\widehat{Z}_n) \) satisfying, for any \( t \in [0,T] \) and all \( (\zeta,\zeta_\Gamma) \in \mathbb{H}^1 \),
\begin{align}\label{equasafibyzsec}
\begin{aligned}
&\int_G (z_T - z_n(t)) \zeta \, dx + \int_\Gamma (z_{\Gamma,T}-z_{\Gamma,n}(t)) \zeta_\Gamma \, d\sigma \\
&= \int_t^T \int_G \mathcal{A} \nabla z_n \cdot \nabla \zeta \, dx \, ds + \int_t^T \int_\Gamma \mathcal{A}_\Gamma \nabla_\Gamma z_{\Gamma,n} \cdot \nabla_\Gamma \zeta_\Gamma \, d\sigma \, ds \\
&\quad + \int_t^T \int_G F_1 \zeta \, dx \, ds + \int_t^T \int_\Gamma F_2 \zeta_\Gamma \, d\sigma \, ds \\
&\quad - \int_t^T \int_G F_n \cdot \nabla \zeta \, dx \, ds - \int_t^T \int_\Gamma F_{\Gamma,n} \cdot \nabla_\Gamma \zeta_\Gamma \, d\sigma \, ds \\
&\quad + \int_t^T \int_G Z_n \zeta \, dx \, dW(s) + \int_t^T \int_\Gamma \widehat{Z}_n \zeta_\Gamma \, d\sigma \, dW(s), \quad \text{a.s.}
\end{aligned}
\end{align}
Applying estimate \eqref{ineqq1wse}, we deduce
\begin{align}\label{interesec2sec}
\begin{aligned}
&\max_{0 \le t \le T} \mathbb{E}  \|(z_n(t),z_{\Gamma,n}(t))\|^2_{\mathbb{L}^2} + \mathbb{E} \int_0^T \|(z_n,z_{\Gamma,n})\|^2_{\mathbb{H}^1} \, ds \\
&\quad + \mathbb{E} \int_0^T \|Z_n\|_{L^2(G)}^2 \, ds + \mathbb{E} \int_0^T \|\widehat{Z}_n\|_{L^2(\Gamma)}^2 \, ds \\
&\le C \bigg[ \mathbb{E} \|(z_T,z_{\Gamma,T})\|^2_{\mathbb{L}^2} + \mathbb{E} \int_0^T \|F_1\|^2_{L^2(G)} \, ds + \mathbb{E} \int_0^T \|F_2\|^2_{L^2(\Gamma)} \, ds \\
&\qquad + \mathbb{E} \int_0^T \|F_n\|^2_{L^2(G;\mathbb{R}^N)} \, ds + \mathbb{E} \int_0^T \|F_{\Gamma,n}\|^2_{L^2(\Gamma;\mathbb{R}^N)} \, ds \bigg].
\end{aligned}
\end{align}
For another sequence \( (F_m,F_{\Gamma,m}) \in \mathscr{F} \) with associated solution \( (z_m,z_{\Gamma,m};Z_m,\widehat{Z}_m) \), the linearity of \eqref{1.1} combined with \eqref{interesec2sec} gives
\begin{align}\label{secinteresec2sec}
\begin{aligned}
&\max_{0 \le t \le T} \mathbb{E} \|(z_n - z_m, z_{\Gamma,n} - z_{\Gamma,m})\|^2_{\mathbb{L}^2} + \mathbb{E} \int_0^T \|(z_n - z_m, z_{\Gamma,n} - z_{\Gamma,m})\|^2_{\mathbb{H}^1} \, ds \\
&\quad + \mathbb{E} \int_0^T \|Z_n - Z_m\|^2_{L^2(G)} \, ds + \mathbb{E} \int_0^T \|\widehat{Z}_n - \widehat{Z}_m\|^2_{L^2(\Gamma)} \, ds \\
&\le C \bigg[ \mathbb{E} \int_0^T \|F_n - F_m\|^2_{L^2(G;\mathbb{R}^N)} \, ds + \mathbb{E} \int_0^T \|F_{\Gamma,n} - F_{\Gamma,m}\|^2_{L^2(\Gamma;\mathbb{R}^N)} \, ds \bigg].
\end{aligned}
\end{align}
Therefore, by using \eqref{secinteresec2sec} and \eqref{converesultofdata}, we have that the sequence 
\(\{(z_n,z_{\Gamma,n};Z_n,\widehat{Z}_n)\}_{n \ge 1}\) is Cauchy in \(\mathcal{Z}_T\). Hence, there exists a unique limit $(z,z_{\Gamma};Z,\widehat{Z})\in \mathcal{Z}_T$ such that
\begin{align}\label{conveestim2.12}
(z_n,z_{\Gamma,n};Z_n,\widehat{Z}_n) \longrightarrow (z,z_{\Gamma};Z,\widehat{Z}) \quad \text{in} \quad \Big(L^2_\mathcal{F}(\Omega; C([0,T];\mathbb{L}^2)) \bigcap L^2_\mathcal{F}(0,T;\mathbb{H}^1)\Big) \times L^2_\mathcal{F}(0,T;\mathbb{L}^2), 
\end{align}
as \( n \to \infty \). Finally, by combining \eqref{conveestim2.12}, \eqref{equasafibyzsec}, and \eqref{converesultofdata}, we deduce that 
\((z,z_{\Gamma};Z,\widehat{Z})\) satisfies \eqref{2.2backprow}, and therefore it is a weak solution of \eqref{1.1}. 
Furthermore, the estimate \eqref{202.3estimbackw} is a direct consequence of \eqref{conveestim2.12}, 
\eqref{interesec2sec}, and \eqref{converesultofdata}. This completes the proof of Proposition \ref{prop1w}.

\end{proof}

\section{Intermediate Carleman Estimate without Divergence Terms}\label{sec3SEC}

This section is devoted to the derivation of a Carleman estimate for \eqref{1.1gen}
(respectively, \eqref{1.1}) in the absence of divergence source terms.
More precisely, we consider the following backward stochastic parabolic system
with dynamic boundary conditions:
\begin{equation}\label{1.1sec}
\begin{cases}
	dz + \nabla \cdot (\mathcal{A} \nabla z)\, dt
	= F_1\, dt + Z\, dW(t),
	& \text{in } Q, \\[0.3em]
	dz_\Gamma + \nabla_\Gamma \cdot (\mathcal{A}_\Gamma \nabla_\Gamma z_\Gamma)\, dt
	- \partial_\nu^{\mathcal{A}} z\, dt
	= F_2\, dt + \widehat{Z}\, dW(t),
	& \text{on } \Sigma, \\[0.3em]
	z_\Gamma = z|_\Gamma,
	& \text{on } \Sigma, \\[0.3em]
	(z, z_\Gamma)\big|_{t=T} = (z_T, z_{\Gamma,T}),
	& \text{in } G \times \Gamma.
\end{cases}
\end{equation}
To derive the desired Carleman estimate, we employ a weighted identity method;
see, for instance, \cite{Preprintelgrou23,tang2009null} for related developments.
Compared with the classical Laplacian case, several additional difficulties arise
in the present setting, mainly due to the dynamic boundary condition
\eqref{1.1sec}\(_2\)-\eqref{1.1sec}\(_3\). First, it is necessary to carefully track the dependence of the Carleman parameters
on the final time $T$. Second, the diffusion operators are described by general, possibly space-dependent, symmetric matrices
\[
\mathcal{A} = (a^{jk})_{1\le j,k\le N} \in C^2(\overline{G}; \mathbb{R}^{N\times N})
\quad \text{and} \quad
\mathcal{A}_\Gamma = (a_\Gamma^{jk})_{1\le j,k\le N} \in C^1(\Gamma; \mathbb{R}^{N\times N}),
\]
which satisfy the assumptions stated in \eqref{asummAandAgama}.
The presence of such matrix-valued diffusion operators significantly complicates
the computations.
In particular, they give rise to additional lower-order terms that require a
refined and delicate analysis.
Moreover, due to the dynamic nature of the boundary condition,
several boundary integral terms must be treated in a modified manner, with all constants made explicit in terms of the final time $T$.

Let $\ell \in C^{1,3}\bigl((0,T)\times\overline{G}\bigr)$ be a given weight function,
and let $\Psi \in C^{1,2}\bigl((0,T)\times\overline{G}\bigr)$ and
$\Phi \in C^{1,0}((0,T)\times\Gamma)$ be two auxiliary functions.
For simplicity of notation, given a function $y = y(x)$ with
$x = (x_1,\ldots,x_N) \in \mathbb{R}^N$, we denote by
\[
y_{x_j} := \frac{\partial y}{\partial x_j}
\quad \text{and} \quad
y_{x_j x_k} := \frac{\partial^2 y}{\partial x_j\, \partial x_k},
\qquad 1 \le j,k \le N,
\]
the first- and second-order partial derivatives of $y$ with respect to the
spatial variables.
Higher-order derivatives are understood in the same manner.
Put
\begin{equation}\label{5.1111012}
    \begin{cases}
        A= \displaystyle\sum_{j,k=1}^N (a^{jk}\ell_{x_j}\ell_{x_k}-a^{jk}_{x_k}\ell_{x_j}-a^{jk}\ell_{x_jx_k})-\Psi-\ell_t,\\
        B= 2\bigg[A\Psi+\displaystyle\sum_{j,k=1}^N (Aa^{jk}\ell_{x_j})_{x_k}\bigg]-A_t+\sum_{j,k=1}^N (a^{jk}\Psi_{x_k})_{x_j},\\
        c^{jk}=\displaystyle\sum_{j',k'=1}^N\Big[2a^{jk'}(a^{j'k}\ell_{x_{j'}})_{x_{k'}}-(a^{jk}a^{j'k'}\ell_{x_{j'}})_{x_{k'}}\Big]+\frac{a^{jk}_t}{2}-\Psi a^{jk},\quad j,k=1,2,\cdots,N,\\
        \widetilde{A}=\Phi-\ell_t,\\
        \widetilde{B}=-2\widetilde{A}\Phi-\widetilde{A}_t.
    \end{cases}
\end{equation}

According to \cite[Theorem~9.26]{lu2021mathematical}, the following fundamental
weighted identity holds for the stochastic parabolic-type operator of the form
\[
dz + \sum_{j,k=1}^N
\frac{\partial}{\partial x_j}
\left(a^{jk}(x)\frac{\partial z}{\partial x_k}\right)\, dt
= dz + \nabla \cdot (\mathcal{A}\nabla z)\, dt,
\]
where the differential $dz$ is understood in the It\^o sense and the divergence
term represents the spatial diffusion operator associated with the matrix
$\mathcal{A}$.
\begin{thm}\label{thm5.1}
Let $z$ be an $H^2(G)$-valued Itô process. Set $\theta=e^\ell$ and $h=\theta z$. Then, for any $t\in[0,T]$ and a.e. $(x,\omega)\in \overline{G}\times\Omega$, the following identity holds:
\begin{align}
\begin{aligned}
&\,2\theta\left[\nabla\cdot(\mathcal{A}\nabla h)+Ah\right]\left[dz+\nabla\cdot(\mathcal{A}\nabla z)dt\right]-2\nabla\cdot(\mathcal{A}\nabla h \,dh)\\
&\quad+2\sum_{j,k=1}^N\bigg[\sum_{j',k'=1}^N \big(2a^{jk}a^{j'k'}\ell_{x_{j'}}h_{x_{j}}h_{x_{k'}}-a^{jk}a^{j'k'}\ell_{x_j}h_{x_{j'}}h_{x_{k'}}\big)\\
&\qquad\qquad\qquad\quad\;\;\;-\Psi a^{jk}h_{x_j}h+a^{jk}\Big(A\ell_{x_j}+\frac{\Psi_{x_j}}{2}\Big)h^2\bigg]_{x_k}dt\\
&=2\sum_{j,k=1}^N c^{jk}h_{x_j}h_{x_k}dt+Bh^2dt+d\bigg(-\sum_{j,k=1}^N a^{jk}h_{x_j}h_{x_k}+Ah^2\bigg)\\
\label{5.121301}& \;\;\;\,+2\big[\nabla\cdot(\mathcal{A}\nabla h)+Ah\big]^2dt+\theta^2\sum_{j,k=1}^N a^{jk}(dz_{xj}+\ell_{x_j}dz)(dz_{x_k}+\ell_{x_k}dz)\\
& \;\;\;\,-\theta^2A(dz)^2.
\end{aligned}
\end{align}
\end{thm}

For the boundary operator, we have the following fundamental weighted identity
for the stochastic parabolic-type operator on $\Gamma$:
\[
dz_\Gamma + \sum_{j,k=1}^N 
D_j\Bigl(a_\Gamma^{jk}(x) D_k z_\Gamma\Bigr)\, dt
- \sum_{j,k=1}^N a^{jk}(x) \frac{\partial z}{\partial x_j} \nu^k\, dt
= dz_\Gamma + \nabla_\Gamma \cdot (\mathcal{A}_\Gamma \nabla_\Gamma z_\Gamma)\, dt
- \partial_\nu^\mathcal{A} z\, dt,
\]
where $dz_\Gamma$ is understood in the It\^o sense, 
$\nabla_\Gamma \cdot (\mathcal{A}_\Gamma \nabla_\Gamma z_\Gamma)$ denotes
the tangential diffusion on the boundary, and 
$\partial_\nu^\mathcal{A} z = \sum_{j,k=1}^N a^{jk} \frac{\partial z}{\partial x_j} \nu^k$
represents the coupling conormal derivative along $\Gamma$.
\begin{thm}\label{thm5.1sec}
Let $z_\Gamma$ be an $H^2(\Gamma)$-valued It\^o process, and let
$\theta = e^\ell$, $h_\Gamma = \theta z_\Gamma$ with $z_\Gamma = z|_\Gamma$. 
Then, for any $t \in [0,T]$ and a.e. $(x,\omega) \in \Gamma \times \Omega$, 
the following weighted identity holds:
\begin{align}\label{5.121301secc}
\begin{aligned}
& 2\theta \Big[\nabla_\Gamma \cdot (\mathcal{A}_\Gamma \nabla_\Gamma h_\Gamma) + \widetilde{A} h_\Gamma \Big] 
\Big[ dz_\Gamma + \nabla_\Gamma \cdot (\mathcal{A}_\Gamma \nabla_\Gamma z_\Gamma)\, dt - \partial_\nu^\mathcal{A} z \, dt \Big]
- 2 \nabla_\Gamma \cdot (\mathcal{A}_\Gamma \nabla_\Gamma h_\Gamma \, dh_\Gamma) \\[0.2em]
&= \widetilde{B} h_\Gamma^2\, dt 
+ d\Big( - \mathcal{A}_\Gamma \nabla_\Gamma h_\Gamma \cdot \nabla_\Gamma h_\Gamma + \widetilde{A} h_\Gamma^2 \Big)
+ 2 \Big[ \nabla_\Gamma \cdot (\mathcal{A}_\Gamma \nabla_\Gamma h_\Gamma) + \widetilde{A} h_\Gamma \Big]^2 dt \\[0.2em]
&\quad + (\mathcal{A}_\Gamma)_t \nabla_\Gamma h_\Gamma \cdot \nabla_\Gamma h_\Gamma\, dt
+ \mathcal{A}_\Gamma \, d\nabla_\Gamma h_\Gamma \cdot d\nabla_\Gamma h_\Gamma
- \theta^2 \widetilde{A} |dz_\Gamma|^2 \\[0.2em]
&\quad - 2 \nabla_\Gamma \cdot (\Phi h_\Gamma \mathcal{A}_\Gamma \nabla_\Gamma h_\Gamma)\, dt
+ 2 \Phi \, \mathcal{A}_\Gamma \nabla_\Gamma h_\Gamma \cdot \nabla_\Gamma h_\Gamma \, dt
+ 2 h_\Gamma \, \mathcal{A}_\Gamma \nabla_\Gamma h_\Gamma \cdot \nabla_\Gamma \Phi \, dt \\[0.2em]
&\quad - 2 \theta^2 \partial_\nu^\mathcal{A} z \, \nabla_\Gamma \cdot (\mathcal{A}_\Gamma \nabla_\Gamma z_\Gamma)\, dt
- 2 \theta^2 \widetilde{A} z_\Gamma \, \partial_\nu^\mathcal{A} z \, dt.
\end{aligned}
\end{align}
\end{thm}

\begin{proof}
We first compute the leading term on the left-hand side of \eqref{5.121301secc}. 
By the definition of $\theta$, we have 
\begin{align}\label{iridenfobouop}
\begin{aligned}
&\,2\theta\left[\nabla_\Gamma\cdot(\mathcal{A}_\Gamma\nabla_\Gamma h_\Gamma)+\widetilde{A}h_\Gamma\right]\left[dz_\Gamma + \nabla_\Gamma \cdot (\mathcal{A}_\Gamma \nabla_\Gamma z_\Gamma)\,dt-\partial_\nu^\mathcal{A}z \, dt\right]\\
&=2\left[\nabla_\Gamma\cdot(\mathcal{A}_\Gamma\nabla_\Gamma h_\Gamma)+\widetilde{A}h_\Gamma\right]\left[\nabla_\Gamma\cdot(\mathcal{A}_\Gamma\nabla_\Gamma h_\Gamma)dt+\widetilde{A}h_\Gamma dt+dh_\Gamma-\Phi h_\Gamma dt-\theta\partial_\nu^\mathcal{A}z \, dt\right]\\
&=2I(I_1+I_2+I_3+I_4),
\end{aligned}
\end{align}
where 
\begin{align*}
&I=\nabla_\Gamma\cdot(\mathcal{A}_\Gamma\nabla_\Gamma h_\Gamma)+\widetilde{A}h_\Gamma ,\quad I_1=\nabla_\Gamma\cdot(\mathcal{A}_\Gamma\nabla_\Gamma h_\Gamma)dt+\widetilde{A}h_\Gamma dt,\\
& I_2=dh_\Gamma,\quad I_3=-\Phi h_\Gamma dt,\quad I_4=-\theta\partial_\nu^\mathcal{A}z \, dt.
\end{align*}
Firstly, we have
\begin{align}\label{ini1}
2II_1=2\left[\nabla_\Gamma\cdot(\mathcal{A}_\Gamma\nabla_\Gamma h_\Gamma)+\widetilde{A}h_\Gamma\right]^2dt.
\end{align}
By Itô's formula, we obtain
\begin{align*}
\begin{aligned}
2II_2=&2\nabla_\Gamma\cdot(\mathcal{A}_\Gamma\nabla_\Gamma h_\Gamma dh_\Gamma)-d(\mathcal{A}_\Gamma\nabla_\Gamma h_\Gamma\cdot\nabla_\Gamma h_\Gamma)+(\mathcal{A}_\Gamma)_t\nabla_\Gamma h_\Gamma\cdot\nabla_\Gamma h_\Gamma dt\\
&+\mathcal{A}_\Gamma d\nabla_\Gamma h_\Gamma\cdot d\nabla_\Gamma h_\Gamma+d(\widetilde{A}h_\Gamma^2)-\widetilde{A}_th_\Gamma^2dt-\widetilde{A}|dh_\Gamma|^2,
\end{aligned}
\end{align*}
which can be equivalently written as
\begin{align}\label{ini2}
\begin{aligned}
2II_2=&2\nabla_\Gamma\cdot(\mathcal{A}_\Gamma\nabla_\Gamma h_\Gamma dh_\Gamma)-d(\mathcal{A}_\Gamma\nabla_\Gamma h_\Gamma\cdot\nabla_\Gamma h_\Gamma)+(\mathcal{A}_\Gamma)_t\nabla_\Gamma h_\Gamma\cdot\nabla_\Gamma h_\Gamma dt\\
&+\mathcal{A}_\Gamma d\nabla_\Gamma h_\Gamma\cdot d\nabla_\Gamma h_\Gamma+d(\widetilde{A}h_\Gamma^2)-\widetilde{A}_th_\Gamma^2dt-\theta^2\widetilde{A}|dz_\Gamma|^2.
\end{aligned}
\end{align}
Similarly, we have
\begin{align}\label{ini3}
2II_3=-2\nabla_\Gamma\cdot(\Phi h_\Gamma\mathcal{A}_\Gamma\nabla_\Gamma h_\Gamma)dt+2\Phi\mathcal{A}_\Gamma\nabla_\Gamma h_\Gamma\cdot\nabla_\Gamma h_\Gamma dt+2h_\Gamma\mathcal{A}_\Gamma\nabla_\Gamma h_\Gamma\cdot\nabla_\Gamma\Phi-2\widetilde{A}\Phi h_\Gamma^2dt.
\end{align}
Finally, for the boundary coupling term, we have
\begin{align}\label{ini4}
2II_4=-2\theta^2\partial_\nu^\mathcal{A}z\nabla_\Gamma\cdot(\mathcal{A}_\Gamma\nabla_\Gamma z_\Gamma)dt-2\theta^2\widetilde{A}z_\Gamma\partial_\nu^\mathcal{A}z dt.
\end{align}
Combining \eqref{iridenfobouop}, \eqref{ini1}, \eqref{ini2}, \eqref{ini3}, and \eqref{ini4},  we deduce the desired identity \eqref{5.121301secc}.
\end{proof}

In what follows, we choose the auxiliary functions
\[
\Psi = -2 \sum_{j,k=1}^N a^{jk} \ell_{x_j x_k},
\qquad
\Phi = \frac{2}{\beta_0}\, M \, (\mathcal{A}\nu \cdot \nu)\, |\nabla \ell|,
\]
where the function $\ell$ is chosen to be the one defined in \eqref{3.2}. Moreover, in Lemma \ref{lm3.1}, the set $G_1$ is taken to be any fixed nonempty open subset satisfying $G_1 \Subset \mathcal{B}$, where $\mathcal{B}$ is any nonempty open subset of $G$. For a positive integer $n$, we denote by $O(\mu^n)$ a function of order $\mu^n$ for sufficiently large $\mu$. Similarly, the notation $O\!\left(e^{\mu \|\psi\|_\infty}\right)$ stands for a generic
quantity of order $e^{\mu \|\psi\|_\infty}$.
By \eqref{3.2} and \eqref{3.3}, it is straightforward to verify that for any
parameters $\lambda, \mu \ge 1$ and for all $j,k = 1,2,\cdots,N$, one has
\begin{equation}\label{5.1201}
\ell_t = \lambda \alpha_t,
\qquad
\ell_{x_j} = \lambda \mu \varphi \psi_{x_j},
\qquad
\ell_{x_j x_k}
= \lambda \mu^2 \varphi \psi_{x_j} \psi_{x_k}
+ \lambda \mu \varphi \psi_{x_j x_k}.
\end{equation}
Moreover, the following estimates hold:
\begin{align}\label{5.1301}
\begin{aligned}
& \alpha_t = T \varphi^2\, O\!\left(e^{2\mu \|\psi\|_\infty}\right),
\qquad
\varphi_t = T \varphi^2\, O(1), \\[0.2em]
& \alpha_{tt} = T^2 \varphi^3\, O\!\left(e^{2\mu \|\psi\|_\infty}\right),
\qquad
\varphi_{tt} = T^2 \varphi^3\, O(1).
\end{aligned}
\end{align}

We next give some useful estimates for the quantities \( A \), \( B \), and \( c^{jk} \) defined in \eqref{5.1111012} (see \cite[Theorem A.2]{Preprintelgrou23}). 
\begin{lm}\label{thm5.2}
There exists a constant
$C = C(G, \mathcal{B},\beta_0,M,M_\Gamma) > 0$ such that, for all $\mu \ge C$ and $\lambda \ge C T^2$, the following estimates hold:
\begin{align*}
A &= \lambda^2 \mu^2 \varphi^2 \sum_{j,k=1}^N a^{jk} \psi_{x_j} \psi_{x_k} + \lambda \varphi O(\mu^2) + \lambda T \varphi^2 O(e^{2\mu \|\psi\|_\infty}), \\
B \geq &\, 2 \beta_0^2 \lambda^3 \mu^4 \varphi^3 |\nabla \psi|^4 + \lambda^3 \varphi^3 O(\mu^4)  + \lambda^2 T \varphi^3 O(\mu^2 e^{2\mu \|\psi\|_\infty}), \\
\sum_{j,k=1}^N c^{jk} \eta_j \eta_k \geq &\, \left[\beta_0^2 \lambda \mu^2 \varphi |\nabla \psi|^2 + \lambda \varphi O(\mu) \right] |\eta|^2, \quad \forall \eta = (\eta_1, \cdots, \eta_N) \in \mathbb{R}^N,
\end{align*}
for almost every $(x,\omega)\in \overline{G\setminus G_1}\times\Omega$
(where $G_1$ is defined in Lemma~\ref{lm3.1})
and for all $t \in [0,T]$.
\end{lm} 
Similarly, for the boundary quantities $\widetilde{A}$ and $\widetilde{B}$
defined in \eqref{5.1111012}, we have the following estimates.
\begin{lm}\label{thm5.2sec}
The following identities hold:
\begin{align*}
&\widetilde{A}=-\frac{2}{\beta_0}M\lambda\mu\varphi\partial_\nu^\mathcal{A}\psi+\lambda T\varphi^2O(e^{2\mu\|\psi\|_\infty}),\\
&\widetilde{B}=-\frac{8}{\beta_0^2}M\lambda^2\mu^2\varphi^2|\partial_\nu^\mathcal{A} \psi|^2+\lambda^2T\varphi^3O(\mu e^{2\mu\|\psi\|_\infty})\\
&\hspace{0.8cm}+\lambda T^2\varphi^3O( e^{4\mu\|\psi\|_\infty}),
\end{align*}
for all \( x \in\Gamma\ \), and for any \( t \in [0,T] \), \,\( \textnormal{a.s.} \)
\end{lm} 

\begin{proof}
We begin by recalling the definition $\widetilde{A} = \Phi - \ell_t$.
By the choice of $\Phi$ and the properties of $\ell$, we have
\begin{align*}
\widetilde{A}
&= \frac{2}{\beta_0}M
(\mathcal{A}\nu \cdot \nu)\, |\nabla \ell|
- \ell_t \\
&= \frac{2}{\beta_0}M
\lambda \mu \varphi (\mathcal{A}\nu \cdot \nu)\, |\nabla \psi|
+ \lambda T \varphi^2\, O\!\left(e^{2\mu\|\psi\|_\infty}\right).
\end{align*}
Since $\nabla \psi = (\partial_\nu \psi)\nu$ on $\Gamma$, it follows that
\[
(\mathcal{A}\nu \cdot \nu)\, |\nabla \psi|
= (\mathcal{A}\nu \cdot \nu)\, |\partial_\nu \psi|.
\]
Moreover, using the identity
$\partial_\nu^\mathcal{A} \psi = (\mathcal{A}\nu \cdot \nu)\partial_\nu \psi$
and the fact that $\partial_\nu \psi < 0$ on $\Gamma$, we obtain
\[
(\mathcal{A}\nu \cdot \nu)\, |\partial_\nu \psi|
= - \partial_\nu^\mathcal{A} \psi.
\]
Consequently,
\begin{equation*}
\widetilde{A}
= -\frac{2}{\beta_0}M
\lambda \mu \varphi\, \partial_\nu^\mathcal{A} \psi
+ \lambda T \varphi^2\, O\!\left(e^{2\mu\|\psi\|_\infty}\right).
\end{equation*}
We next estimate $\widetilde{B}$. By definition,
\[
\widetilde{B} = -2\widetilde{A}\Phi - \widetilde{A}_t.
\]
Substituting the above expression for $\widetilde{A}$ and recalling that
$\Phi = -\frac{2}{\beta_0}M
\lambda \mu \varphi\, \partial_\nu^\mathcal{A} \psi$, we compute
\begin{align*}
\widetilde{B}
&= -2\left(
-\frac{2}{\beta_0}M
\lambda \mu \varphi\, \partial_\nu^\mathcal{A} \psi
+ \lambda T \varphi^2\, O\!\left(e^{2\mu\|\psi\|_\infty}\right)
\right)
\left(
-\frac{2}{\beta_0}M
\lambda \mu \varphi\, \partial_\nu^\mathcal{A} \psi
\right) \\
&\quad
- \partial_t\!\left(
-\frac{2}{\beta_0}M
\lambda \mu \varphi\, \partial_\nu^\mathcal{A} \psi
+ \lambda T \varphi^2\, O\!\left(e^{2\mu\|\psi\|_\infty}\right)
\right).
\end{align*}
Using the bounds for $\varphi_t$ in \eqref{5.1301}, we deduce
\begin{align*}
\widetilde{B}
&= -\frac{8}{\beta_0^2}M^2
\lambda^2 \mu^2 \varphi^2
|\partial_\nu^\mathcal{A} \psi|^2
+ \lambda^2 T \varphi^3\, O\!\left(\mu e^{2\mu\|\psi\|_\infty}\right) \\
&\quad
+ \lambda T^2 \varphi^3\, O\!\left(e^{4\mu\|\psi\|_\infty}\right),
\end{align*}
which completes the proof.
\end{proof}

As a consequence of the weighted identities and estimates established above, we obtain the following Carleman estimate for system \eqref{1.1sec}.
\begin{lm}\label{lm2.2}
There exists a constant \( C = C(G, \mathcal{B}, \beta_0,M,M_\Gamma) > 0 \) such that for all \( F_1 \in L^2_\mathcal{F}(0,T;L^2(G)) \), \( F_2 \in L^2_\mathcal{F}(0,T;L^2(\Gamma)) \), and the final state \( (z_T, z_{\Gamma,T}) \in L^2_{\mathcal{F}_T}(\Omega; \mathbb{L}^2) \), the weak solution $(z,z_\Gamma;Z,\widehat{Z})$ of \eqref{1.1sec} satisfies the estimate
\begin{align}\label{3.5}
 \begin{aligned}
&\,\lambda^3 \mu^4 \mathbb{E}\iint_Q \theta^2 \varphi^3 |z|^2 \, dx  dt + \lambda^3 \mu^3 \mathbb{E}\iint_\Sigma \theta^2 \varphi^3 |z_\Gamma|^2 \, d\sigma  dt \\
&+ \lambda \mu^2 \mathbb{E}\iint_Q \theta^2 \varphi |\nabla z|^2 \, dx  dt + \lambda \mu \mathbb{E}\iint_\Sigma \theta^2 \varphi |\nabla_\Gamma z_\Gamma|^2 \, d\sigma  dt \\
&\leq C \bigg[ \lambda^3 \mu^4 \mathbb{E}\int_0^T\int_\mathcal{B} \theta^2 \varphi^3 |z|^2 \, dx  dt + \mathbb{E}\iint_Q \theta^2 |F_1|^2 \, dx  dt + \mathbb{E}\iint_\Sigma \theta^2 |F_2|^2 \, d\sigma  dt \\
&\qquad\quad + \lambda^2 \mu^2 \mathbb{E}\iint_Q \theta^2 \varphi^2 |Z|^2 \, dx  dt + \lambda^2 \mu \mathbb{E}\iint_\Sigma \theta^2 \varphi^2 |\widehat{Z}|^2 \, d\sigma  dt \bigg],
\end{aligned}
\end{align}
for large enough \( \mu \geq C \) and \( \lambda \geq C(e^{2 \mu \|\psi\|_\infty} T + T^2) \).
\end{lm}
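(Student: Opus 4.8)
The plan is to prove \eqref{3.5} by the stochastic weighted--identity (pointwise Carleman) method, handling the bulk operator $dz+\nabla\cdot(\mathcal{A}\nabla z)\,dt$ and the surface operator $dz_\Gamma+\nabla_\Gamma\cdot(\mathcal{A}_\Gamma\nabla_\Gamma z_\Gamma)\,dt-\partial_\nu^{\mathcal{A}}z\,dt$ simultaneously, with the coupling supplied by the conormal derivative $\partial_\nu^{\mathcal{A}}z$ on $\Sigma$. First I would pass to the weighted variables $\theta z$ and $\theta z_\Gamma$, where $\theta=e^{\lambda\alpha}$ with the singular weight $\alpha$ from \eqref{3.2}. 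Since $\alpha\to-\infty$ as $t\to0^+$ and $t\to T^-$, the weight $\theta$ and all its relevant derivatives vanish at the temporal endpoints; consequently every boundary contribution in time produced below disappears, and in particular the terminal data $(z_T,z_{\Gamma,T})$ never enters the estimate.

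The core of the argument is to establish two pointwise weighted identities. Applying It\^o's formula to the product of $\theta\bigl(dz+\nabla\cdot(\mathcal{A}\nabla z)\,dt\bigr)$ with a suitable first--order expression in $\theta z$ (schematically of the form involving $\ell_t\,\theta z$ and $\nabla\cdot(\mathcal{A}\nabla\ell)\,\theta z$), I would expand and reorganize the result into (i) an exact space--time divergence, (ii) the dominant positive quadratic forms carrying the leading powers $\lambda^3\mu^4\varphi^3(\theta z)^2$ and $\lambda\mu^2\varphi|\nabla(\theta z)|^2$, and (iii) remainder terms of strictly lower order in $\lambda$ and $\mu$. The quadratic variation of the stochastic differential generates an additional $\theta^2Z^2$ term, whose weight yields exactly the $\lambda^2\mu^2\varphi^2Z^2$ contribution on the right--hand side. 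The same procedure on $\Sigma$, using the tangential divergence formula together with $\nabla_\Gamma\alpha=\nabla_\Gamma\varphi=0$ and $\nabla_\Gamma\psi=0$ from \eqref{3.20101}, produces the analogous surface quadratic forms and the $\widehat{Z}$--term.

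After integrating over $Q$ and $\Sigma$ and taking expectations, the exact divergences integrate to boundary integrals on $\Sigma$, which are matched against the boundary contributions of the surface identity through the coupling term $\partial_\nu^{\mathcal{A}}z$ and the sign condition $\partial_\nu\psi\le -c<0$ from \eqref{3.20101}. The main obstacle is the control of the surface gradient: the bulk identity leaves behind a boundary integral of the type $\lambda\mu\,\mathbb{E}\iint_\Sigma\theta^2\varphi|\partial_\nu^{\mathcal{A}}\psi|\,|\nabla_\Gamma z_\Gamma|^2\,d\sigma\,dt$, which cannot be absorbed by any bulk quantity. It is precisely the surface diffusion term $\nabla_\Gamma\cdot(\mathcal{A}_\Gamma\nabla_\Gamma z_\Gamma)$ that furnishes a dominant $\lambda\mu^2\varphi|\nabla_\Gamma z_\Gamma|^2$ on $\Sigma$, so that for $\mu$ large this problematic term is swallowed; this is the step that genuinely requires the boundary Laplace--Beltrami--type operator to be present.

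The final step is to fix the parameters in the stated regime. Choosing $\mu\ge C$ dominates the $\mu$--linear remainders, and then $\lambda\ge C(e^{2\mu|\psi|_\infty}T+T^2)$ dominates the time--derivative remainders, whose magnitude is governed by the bounds $|\alpha_t|\le CTe^{2\mu|\psi|_\infty}\varphi^2$ and $|\alpha_{tt}|\le CT^2e^{2\mu|\psi|_\infty}\varphi^3$ from \eqref{3.3}; this is exactly where the explicit $T$--dependence of the thresholds is forced. The resulting inequality first localizes the observation as a gradient term over a slightly larger set $G_1\Subset G_0$; a standard cutoff argument, multiplying by a smooth function equal to $1$ on $G_1$ and supported in $G_0$ and integrating by parts, then trades this gradient observation for the zeroth--order term $\lambda^3\mu^4\,\mathbb{E}\iint_{Q_0}\theta^2\varphi^3z^2\,dx\,dt$, yielding \eqref{3.5}. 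The heavy pointwise computations underlying the two weighted identities are carried out in Appendix \ref{appA}.
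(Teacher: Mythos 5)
Your proposal follows essentially the same route as the paper's proof in Appendix \ref{appA}: two weighted identities for the bulk and surface operators applied to $h=\theta z$ and $h_\Gamma=\theta z_\Gamma$, integration over $Q$ and $\Sigma$ using the vanishing of $\theta$ at $t=0,T$, absorption of the problematic boundary integral $\lambda\mu\,\mathbb{E}\iint_\Sigma\theta^2\varphi|\partial_\nu^{\mathcal{A}}\psi|\,|\nabla_\Gamma z_\Gamma|^2\,d\sigma\,dt$ by the positive surface-diffusion contribution, the parameter regime $\mu\ge C$, $\lambda\ge C(e^{2\mu|\psi|_\infty}T+T^2)$ dictated by the bounds \eqref{3.3}, and the final cutoff argument trading the gradient observation on $G_1\Subset G_0$ for the zeroth-order term on $Q_0$. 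The outline is correct and matches the paper's strategy.
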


\begin{proof} For clarity, we divide the proof into three steps.\\
\textbf{Step 1. Preliminary estimates.}\\
Integrating \eqref{5.121301} on $Q$, taking expectation on both sides and using the fact that $\theta(0,\cdot)=\theta(T,\cdot)=0$ and recalling \eqref{asummAandAgama}, we conclude that
\begin{align}\label{5.121301sec}
\begin{aligned}
&2\mathbb{E}\iint_Q\sum_{j,k=1}^N c^{jk}h_{x_j}h_{x_k} dxdt+\mathbb{E}\iint_Q Bh^2 dxdt\\
&\leq \mathbb{E}\iint_Q\theta^2 F_1^2 dxdt-2\mathbb{E}\iint_Q\nabla\cdot(\mathcal{A}\nabla h \,dh) dx+\mathbb{E}\iint_Q\theta^2AZ^2 dxdt\\
&\quad\;+2\mathbb{E}\iint_Q\sum_{j,k=1}^N\bigg[\sum_{j',k'=1}^N \big(2a^{jk}a^{j'k'}\ell_{x_{j'}}h_{x_{j}}h_{x_{k'}}-a^{jk}a^{j'k'}\ell_{x_j}h_{x_{j'}}h_{x_{k'}}\big)\\
&\qquad\qquad\qquad\quad\;\;\;-\Psi a^{jk}h_{x_j}h+a^{jk}\Big(A\ell_{x_j}+\frac{\Psi_{x_j}}{2}\Big)h^2\bigg]_{x_k} dxdt.
\end{aligned}
\end{align}
Integrating \eqref{5.121301secc} on $\Sigma$, taking expectation on both sides and using the fact that
$\theta(0,\cdot)=\theta(T,\cdot)=0$ and recalling \eqref{asummAandAgama}, we conclude that
\begin{align}\label{5.A71sec}
\begin{aligned}
&\,\mathbb{E}\iint_\Sigma\widetilde{B}h_\Gamma^2d\sigma dt+2\beta_0\mathbb{E}\iint_\Sigma\Phi|\nabla_\Gamma h_\Gamma|^2 d\sigma dt\\
&\leq \mathbb{E}\iint_\Sigma\theta^2 F_2^2d\sigma dt+\mathbb{E}\iint_\Sigma\theta^2\widetilde{A}\widehat{Z}^2d\sigma dt-2\mathbb{E}\iint_\Sigma h_\Gamma\mathcal{A}_\Gamma\nabla_\Gamma h_\Gamma\cdot\nabla_\Gamma\Phi d\sigma dt\\
&\quad-\mathbb{E}\iint_\Sigma \theta^2(\mathcal{A}_\Gamma)_t\nabla_\Gamma z_\Gamma\cdot\nabla_\Gamma z_\Gamma d\sigma dt+2\mathbb{E}\iint_\Sigma \theta^2\partial_\nu^\mathcal{A}z\nabla_\Gamma\cdot(\mathcal{A}_\Gamma\nabla_\Gamma z_\Gamma) d\sigma dt\\
&\quad+2\mathbb{E}\iint_\Sigma \theta^2\widetilde{A}z_\Gamma\partial_\nu^\mathcal{A}z d\sigma dt.
\end{aligned}
\end{align}
Combining \eqref{5.121301sec} and \eqref{5.A71sec}, we get that
\begin{align*}
\begin{aligned}
&2\mathbb{E}\iint_Q\sum_{j,k=1}^N c^{jk}h_{x_j}h_{x_k} dxdt+\mathbb{E}\iint_Q Bh^2 dxdt\\
&+\mathbb{E}\iint_\Sigma\widetilde{B}h_\Gamma^2d\sigma dt+2\beta_0\mathbb{E}\iint_\Sigma\Phi|\nabla_\Gamma h_\Gamma|^2 d\sigma dt\\
&\leq \mathbb{E}\iint_Q\theta^2 F_1^2 dxdt+\mathbb{E}\iint_\Sigma\theta^2 F_2^2d\sigma dt+\mathbb{E}\iint_Q\theta^2AZ^2 dxdt\\
&\quad\;+2\mathbb{E}\iint_Q\sum_{j,k=1}^N\bigg[\sum_{j',k'=1}^N \big(2a^{jk}a^{j'k'}\ell_{x_{j'}}h_{x_{j}}h_{x_{k'}}-a^{jk}a^{j'k'}\ell_{x_j}h_{x_{j'}}h_{x_{k'}}\big)\\
&\qquad\qquad\qquad\quad\;\;\;-\Psi a^{jk}h_{x_j}h+a^{jk}\Big(A\ell_{x_j}+\frac{\Psi_{x_j}}{2}\Big)h^2\bigg]_{x_k} dxdt\\
&\quad\;-2\mathbb{E}\iint_Q\nabla\cdot(\mathcal{A}\nabla h \,dh) dx+\mathbb{E}\iint_\Sigma\theta^2\widetilde{A}\widehat{Z}^2d\sigma dt-2\mathbb{E}\iint_\Sigma h_\Gamma\mathcal{A}_\Gamma\nabla_\Gamma h_\Gamma\cdot\nabla_\Gamma\Phi d\sigma dt\\
&\quad\;-\mathbb{E}\iint_\Sigma \theta^2(\mathcal{A}_\Gamma)_t\nabla_\Gamma z_\Gamma\cdot\nabla_\Gamma z_\Gamma d\sigma dt+2\mathbb{E}\iint_\Sigma \theta^2\partial_\nu^\mathcal{A}z\nabla_\Gamma\cdot(\mathcal{A}_\Gamma\nabla_\Gamma z_\Gamma) d\sigma dt\\
&\quad\;+2\mathbb{E}\iint_\Sigma \theta^2\widetilde{A}z_\Gamma\partial_\nu^\mathcal{A}z d\sigma dt.
\end{aligned}
\end{align*}
Hence, it follows that
\begin{align}\label{5.121301seces1}
\begin{aligned}
&2\mathbb{E}\iint_Q\sum_{j,k=1}^N c^{jk}h_{x_j}h_{x_k} dxdt+\mathbb{E}\iint_Q Bh^2 dxdt+\textbf{I}_1+\textbf{I}_2\\
&\leq \mathbb{E}\iint_Q\theta^2 F_1^2 dxdt+\mathbb{E}\iint_\Sigma\theta^2 F_2^2d\sigma dt+\mathbb{E}\iint_Q\theta^2AZ^2 dxdt+\sum_{i=3}^9 \textbf{I}_i.
\end{aligned}
\end{align}
\textbf{Step 2. Estimating the boundary terms \( \textbf{I}_i \), \( i = 1, 2, \cdots, 9 \).}\\
From Lemma \ref{thm5.2sec}, we first have that
\begin{align*}
    \begin{aligned}
\textbf{I}_1=\mathbb{E}\iint_\Sigma\widetilde{B}h_\Gamma^2d\sigma dt=\mathbb{E}\iint_\Sigma\theta^2\bigg[&-\frac{8}{\beta_0^2}M^2\lambda^2\mu^2\varphi^2|\partial_\nu^\mathcal{A} \psi|^2+\lambda^2T\varphi^3O(\mu e^{2\mu\|\psi\|_\infty})\\
&+\lambda T^2\varphi^3O( e^{4\mu\|\psi\|_\infty})\bigg] z_\Gamma^2d\sigma dt,
    \end{aligned}
\end{align*}
which provides that
\begin{align}\label{ineq8}
    \begin{aligned}
\textbf{I}_1\geq &-C\lambda^2\mu^2\mathbb{E}\iint_\Sigma\theta^2\varphi^2z_\Gamma^2 d\sigma dt-C\lambda^2\mu T e^{2\mu\|\psi\|_\infty}\mathbb{E}\iint_\Sigma\theta^2\varphi^3z_\Gamma^2 d\sigma dt\\
&-C\lambda T^2 e^{4\mu\|\psi\|_\infty}\mathbb{E}\iint_\Sigma\theta^2\varphi^3z_\Gamma^2 d\sigma dt.
    \end{aligned}
\end{align}
We also have that
\begin{align*}
\textbf{I}_2=2\beta_0\mathbb{E}\displaystyle\iint_\Sigma\Phi \vert\nabla_\Gamma v_\Gamma\vert^2 d\sigma dt =  4\lambda\mu M\mathbb{E}\displaystyle\iint_\Sigma \theta^2\varphi \vert\partial_\nu^\mathcal{A}\psi\vert \vert\nabla_\Gamma z_\Gamma\vert^2  d\sigma dt,
\end{align*}
which leads to
\begin{align}\label{ineq1}
\textbf{I}_2 =  -4\lambda\mu M\mathbb{E}\displaystyle\iint_\Sigma \theta^2\varphi \partial_\nu^\mathcal{A}\psi \vert\nabla_\Gamma z_\Gamma\vert^2  d\sigma dt.
\end{align}
In what follows, we use the fact that for all $(t,x)\in\Sigma$, we have
\begin{equation*}
		\nabla\ell =\lambda\mu\varphi\partial_\nu\psi \nu\quad\textnormal{and}\quad\nabla h=\theta\nabla z+\lambda\mu\theta\varphi z_\Gamma\partial_\nu\psi \nu.
        \end{equation*}
Notice that
\begin{align*}
\textbf{I}_3&=2\mathbb{E}\iint_Q\sum_{j,k=1}^N\bigg[\sum_{j',k'=1}^N \big(2a^{jk}a^{j'k'}\ell_{x_{j'}}h_{x_{j}}h_{x_{k'}}-a^{jk}a^{j'k'}\ell_{x_j}h_{x_{j'}}h_{x_{k'}}\big)\\
&\hspace{3.6cm}-\Psi a^{jk}h_{x_j}h+a^{jk}\Big(A\ell_{x_j}+\frac{\Psi_{x_j}}{2}\Big)h^2\bigg]_{x_k} dxdt\\
&=\textbf{I}_3^1+\textbf{I}_3^2+\textbf{I}_3^3, 
\end{align*}
where
\begin{align*}
&\textbf{I}_3^1=2\mathbb{E}\iint_Q\sum_{j,k=1}^N\bigg[\sum_{j',k'=1}^N \big(2a^{jk}a^{j'k'}\ell_{x_{j'}}h_{x_{j}}h_{x_{k'}}-a^{jk}a^{j'k'}\ell_{x_j}h_{x_{j'}}h_{x_{k'}}\big)\bigg]_{x_k} dxdt,\\
&\textbf{I}_3^2=-2\mathbb{E}\iint_Q\sum_{j,k=1}^N[\Psi a^{jk}h_{x_j}h]_{x_k} dxdt,\\
&\textbf{I}_3^3=2\mathbb{E}\iint_Q\sum_{j,k=1}^N\bigg[ a^{jk}\Big(A\ell_{x_j}+\frac{\Psi_{x_j}}{2}\Big)h^2\bigg]_{x_k} dxdt.
\end{align*}
By direct computations, we obtain that
\begin{align}\label{ina11h}
\begin{aligned}
\textbf{I}_3^1&=2\lambda^3\mu^3\mathbb{E}\iint_\Sigma \theta^2\varphi^3\partial_\nu\psi|\partial_\nu^\mathcal{A}\psi|^2z_\Gamma^2 d\sigma dt+4\lambda^2\mu^2\mathbb{E}\iint_\Sigma\theta^2\varphi^2\partial_\nu\psi\partial_\nu^\mathcal{A}\psi z_\Gamma\partial_\nu^\mathcal{A} z d\sigma dt\\
&\quad+4\lambda\mu\mathbb{E}\iint_\Sigma\theta^2\varphi\partial_\nu\psi|\partial_\nu^\mathcal{A} z|^2 d\sigma dt-2\lambda\mu\mathbb{E}\iint_\Sigma\theta^2\varphi\partial_\nu\psi(\mathcal{A}\nabla z\cdot\nabla z)(\mathcal{A}\nu\cdot\nu)d\sigma dt.
\end{aligned}
\end{align}
Using the inequity $4ab\leq\frac{8}{3}a^2+\frac{3}{2}b^2$ for the second term in the right-hand side of \eqref{ina11h}, we obtain that
\begin{align*}
\begin{aligned}
\textbf{I}_3^1&\leq-\frac{2}{3}\lambda^3\mu^3\mathbb{E}\iint_\Sigma \theta^2\varphi^3\partial_\nu\psi|\partial_\nu^\mathcal{A}\psi|^2z_\Gamma^2 d\sigma dt+\frac{5}{2}\lambda\mu\mathbb{E}\iint_\Sigma\theta^2\varphi\partial_\nu\psi|\partial_\nu^\mathcal{A} z|^2 d\sigma dt\\
&\quad-2\lambda\mu\mathbb{E}\iint_\Sigma\theta^2\varphi\partial_\nu\psi(\mathcal{A}\nabla z\cdot\nabla z)(\mathcal{A}\nu\cdot\nu)d\sigma dt.
\end{aligned}
\end{align*}
It follows that
\begin{align}\label{ineqq1}
\begin{aligned}
\textbf{I}_3^1&\leq-\frac{2}{3}\lambda^3\mu^3\mathbb{E}\iint_\Sigma \theta^2\varphi^3\partial_\nu\psi|\partial_\nu^\mathcal{A}\psi|^2z_\Gamma^2 d\sigma dt+\frac{1}{2}\lambda\mu\mathbb{E}\iint_\Sigma\theta^2\varphi\partial_\nu\psi|\partial_\nu^\mathcal{A} z|^2 d\sigma dt\\
&\quad+2\lambda\mu\mathbb{E}\iint_\Sigma\theta^2\varphi\partial_\nu\psi|\partial_\nu^\mathcal{A} z|^2 d\sigma dt-2\lambda\mu\mathbb{E}\iint_\Sigma\theta^2\varphi\partial_\nu\psi(\mathcal{A}\nabla z\cdot\nabla z)(\mathcal{A}\nu\cdot\nu)d\sigma dt.
\end{aligned}
\end{align}
To estimate the last two terms on the right-hand side of \eqref{ineqq1}, we use the identity (see \cite[Lemma 2.3]{haschmano})
\begin{align*}
    |\partial_\nu^\mathcal{A} z|^2-(\mathcal{A}\nabla_\Gamma z_\Gamma\cdot\nu)^2=(\mathcal{A}\nu\cdot\nu)(\mathcal{A}\nabla z\cdot\nabla z - \mathcal{A}\nabla_\Gamma z_\Gamma\cdot\nabla_\Gamma z_\Gamma).
\end{align*}
This implies that
\begin{align*}
\begin{aligned}
&2\lambda\mu\mathbb{E}\iint_\Sigma\theta^2\varphi\partial_\nu\psi|\partial_\nu^\mathcal{A} z|^2 d\sigma dt-2\lambda\mu\mathbb{E}\iint_\Sigma\theta^2\varphi\partial_\nu\psi(\mathcal{A}\nabla z\cdot\nabla z)(\mathcal{A}\nu\cdot\nu)d\sigma dt\\
&
\leq -2\lambda\mu\mathbb{E}\iint_\Sigma\theta^2\varphi\partial_\nu^\mathcal{A}\psi (\mathcal{A}\nabla_\Gamma z_\Gamma\cdot\nabla_\Gamma z_\Gamma) d\sigma dt,
\end{aligned}
\end{align*}
which provides that
\begin{align}\label{ineqq2}
\begin{aligned}
&2\lambda\mu\mathbb{E}\iint_\Sigma\theta^2\varphi\partial_\nu\psi|\partial_\nu^\mathcal{A} z|^2 d\sigma dt-2\lambda\mu\mathbb{E}\iint_\Sigma\theta^2\varphi\partial_\nu\psi(\mathcal{A}\nabla z\cdot\nabla z)(\mathcal{A}\nu\cdot\nu)d\sigma dt\\
&
\leq -2\lambda\mu M\mathbb{E}\iint_\Sigma\theta^2\varphi\partial_\nu^\mathcal{A}\psi |\nabla_\Gamma z_\Gamma|^2 d\sigma dt.
\end{aligned}
\end{align}
From \eqref{ineqq1} and \eqref{ineqq2}, we deduce that
\begin{align}\label{ineq3}
\begin{aligned}
\textbf{I}_3^1&\leq-\frac{2}{3}\lambda^3\mu^3\mathbb{E}\iint_\Sigma \theta^2\varphi^3\partial_\nu\psi|\partial_\nu^\mathcal{A}\psi|^2z_\Gamma^2 d\sigma dt+\frac{1}{2}\lambda\mu\mathbb{E}\iint_\Sigma\theta^2\varphi\partial_\nu\psi|\partial_\nu^\mathcal{A} z|^2 d\sigma dt\\
&\quad-2\lambda\mu M\mathbb{E}\iint_\Sigma\theta^2\varphi\partial_\nu^\mathcal{A}\psi |\nabla_\Gamma z_\Gamma|^2 d\sigma dt.
\end{aligned}
\end{align}
For large enough $\mu\geq C$, we have that
\begin{align*}
\begin{aligned}
\textbf{I}_3^2&=-2\mathbb{E}\iint_\Sigma\sum_{j,k=1}^N \Psi a^{jk}(\lambda\mu\theta\varphi z_\Gamma\partial_\nu\psi\nu_j+\theta z_{x_j})\theta z_\Gamma\nu_k d\sigma dt\\
&=-2\mathbb{E}\iint_\Sigma\Psi (\lambda\mu\theta^2\varphi z_\Gamma^2\partial_\nu\psi\mathcal{A}\nu\cdot\nu+\theta^2 z_\Gamma\partial_\nu^\mathcal{A}z) d\sigma dt\\
&=-2\mathbb{E}\iint_\Sigma \bigg(-2\lambda\mu^2\varphi\mathcal{A}\nabla\psi\cdot\nabla\psi-2\lambda\mu\varphi\sum_{j,k=1}^N a^{jk}\psi_{x_jx_k}\bigg)\\
&\hspace{2cm}(\lambda\mu\theta^2\varphi z_\Gamma^2\partial_\nu\psi\mathcal{A}\nu\cdot\nu+\theta^2 z_\Gamma\partial_\nu^\mathcal{A}z) d\sigma dt,
\end{aligned}
\end{align*}
which leads to 
\begin{align}\label{ineqqa15}
\begin{aligned}
\textbf{I}_3^2\leq C\lambda\mu^2\mathbb{E}\iint_\Sigma\theta^2\varphi |z_\Gamma| |\partial_\nu^\mathcal{A}z|d\sigma dt+C\lambda^2\mu^2\mathbb{E}\iint_\Sigma\theta^2\varphi^2 z_\Gamma^2 d\sigma dt.
\end{aligned}
\end{align}
Applying Young's inequality for the first term on the right-hand side of \eqref{ineqqa15}, we have that
\begin{align}\label{ineq4}
\begin{aligned}
\textbf{I}_3^2\leq C\mu\mathbb{E}\iint_\Sigma\theta^2\varphi |\partial_\nu^\mathcal{A}z|^2d\sigma dt+C\lambda^2\mu^3\mathbb{E}\iint_\Sigma\theta^2\varphi z_\Gamma^2 d\sigma dt+C\lambda^2\mu^2\mathbb{E}\iint_\Sigma\theta^2\varphi^2 z_\Gamma^2 d\sigma dt.
\end{aligned}
\end{align}
On the other hand, it is easy to see that
\begin{align*}
\begin{aligned}
\textbf{I}_3^3&=2\mathbb{E}\iint_\Sigma (A\lambda\mu\varphi\partial_\nu\psi\mathcal{A}\nu\cdot\nu+\frac{1}{2}\partial_\nu^\mathcal{A}\Psi)\theta^2 z_\Gamma^2 d\sigma dt\\
&=2\mathbb{E}\iint_\Sigma \left[\lambda^3\mu^3\theta^2\varphi^3 z_\Gamma^2\partial_\nu\psi(\mathcal{A}\nu\cdot\nu)(\mathcal{A}\nabla\psi\cdot\nabla\psi)+\lambda^2\mu T\varphi^3O(e^{2\mu\|\psi\|_\infty})\right] d\sigma dt,
\end{aligned}
\end{align*}
which implies for large enough $\lambda\geq CTe^{2\mu\|\psi\|_\infty}$,
\begin{align}\label{ineq5}
\begin{aligned}
\textbf{I}_3^3\leq \frac{3}{2}\lambda^3\mu^3\mathbb{E}\iint_\Sigma \theta^2\varphi^3 \partial_\nu\psi|\partial_\nu^{\mathcal{A}}\psi|^2z_\Gamma^2 d\sigma dt.
\end{aligned}
\end{align}
We also have that
\begin{align*}
    \begin{aligned}
\textbf{I}_{4}&=-2\mathbb{E}\iint_Q\nabla\cdot(\mathcal{A}\nabla h \,dh) dx\\
&=-2\mathbb{E}\iint_\Sigma \lambda\mu\varphi\partial_\nu\psi\theta^2z_\Gamma(\mathcal{A}\nu\cdot\nu)(-\nabla_\Gamma\cdot(\mathcal{A}_\Gamma\nabla_\Gamma z_\Gamma)+\partial_\nu^\mathcal{A}z+F_2+\ell_t z_\Gamma)d\sigma dt\\
&\quad\;-2\mathbb{E}\iint_\Sigma \theta^2\partial_\nu^\mathcal{A}z(-\nabla_\Gamma\cdot(\mathcal{A}_\Gamma\nabla_\Gamma z_\Gamma)+\partial_\nu^\mathcal{A}z+F_2+\ell_t z_\Gamma)d\sigma dt.
    \end{aligned}
\end{align*}
Then, it follows that
\begin{align*}
    \begin{aligned}
\textbf{I}_{4}&=2\mathbb{E}\iint_\Sigma \lambda\mu\varphi\partial_\nu^\mathcal{A}\psi\theta^2z_\Gamma\nabla_\Gamma\cdot(\mathcal{A}_\Gamma\nabla_\Gamma z_\Gamma)d\sigma dt-2\mathbb{E}\iint_\Sigma \lambda\mu\varphi\partial^\mathcal{A}_\nu\psi\theta^2z_\Gamma\partial_\nu^\mathcal{A}zd\sigma dt\\
&\quad\;-2\mathbb{E}\iint_\Sigma \lambda\mu\varphi\partial^\mathcal{A}_\nu\psi\theta^2z_\Gamma F_2d\sigma dt-2\mathbb{E}\iint_\Sigma \lambda\mu\varphi\partial^\mathcal{A}_\nu\psi\theta^2z_\Gamma^2\ell_t  d\sigma dt
\\
&\quad\;+2\mathbb{E}\iint_\Sigma \theta^2\partial_\nu^\mathcal{A}z\nabla_\Gamma\cdot(\mathcal{A}_\Gamma\nabla_\Gamma z_\Gamma) d\sigma dt -2\mathbb{E}\iint_\Sigma \theta^2|\partial_\nu^\mathcal{A}z|^2 d\sigma dt \\
&\quad\;-2\mathbb{E}\iint_\Sigma \theta^2\partial_\nu^\mathcal{A}z F_2 d\sigma dt -2\mathbb{E}\iint_\Sigma \theta^2z_\Gamma\partial_\nu^\mathcal{A}z \ell_t d\sigma dt,
    \end{aligned}
\end{align*}
which gives that
\begin{align*}
    \begin{aligned}
\textbf{I}_{4}&=\mathbb{E}\iint_\Sigma (2\lambda\mu\theta^2\varphi\partial_\nu^\mathcal{A}\psi z_\Gamma+2\theta^2\partial_\nu^\mathcal{A}z)\nabla_\Gamma\cdot(\mathcal{A}_\Gamma\nabla_\Gamma z_\Gamma)d\sigma dt
\\
&\quad\;-2\mathbb{E}\iint_\Sigma \lambda\mu\varphi\partial^\mathcal{A}_\nu\psi\theta^2z_\Gamma\partial_\nu^\mathcal{A}zd\sigma dt-2\mathbb{E}\iint_\Sigma \lambda\mu\varphi\partial^\mathcal{A}_\nu\psi\theta^2z_\Gamma F_2d\sigma dt\\
&\quad\;-2\mathbb{E}\iint_\Sigma \lambda\mu\varphi\partial^\mathcal{A}_\nu\psi\theta^2z_\Gamma^2\ell_t  d\sigma dt-2\mathbb{E}\iint_\Sigma \theta^2|\partial_\nu^\mathcal{A}z|^2 d\sigma dt \\
&\quad\;-2\mathbb{E}\iint_\Sigma \theta^2\partial_\nu^\mathcal{A}z F_2 d\sigma dt -2\mathbb{E}\iint_\Sigma \theta^2z_\Gamma\partial_\nu^\mathcal{A}z \ell_t d\sigma dt.
    \end{aligned}
\end{align*}
Hence, it follows that
\begin{align}\label{ineqqa178}
    \begin{aligned}
\textbf{I}_{4}&\leq C\lambda\mu\mathbb{E}\iint_\Sigma \theta^2\varphi|\partial_\nu^\mathcal{A}\psi| |z_\Gamma||\nabla_\Gamma z_\Gamma|d\sigma dt +C\mathbb{E}\iint_\Sigma \theta^2|\partial_\nu^\mathcal{A}z||\nabla_\Gamma z_\Gamma|d\sigma dt
\\
&\quad\;+2\lambda\mu\mathbb{E}\iint_\Sigma \theta^2\varphi|\partial^\mathcal{A}_\nu\psi||z_\Gamma| |\partial_\nu^\mathcal{A}z|d\sigma dt+2\lambda\mu\mathbb{E}\iint_\Sigma \theta^2\varphi|\partial^\mathcal{A}_\nu\psi||z_\Gamma| |F_2|d\sigma dt\\
&\quad\;+2\lambda\mu\mathbb{E}\iint_\Sigma \theta^2\varphi|\partial^\mathcal{A}_\nu\psi|z_\Gamma^2 |\ell_t|  d\sigma dt+2\mathbb{E}\iint_\Sigma \theta^2|\partial_\nu^\mathcal{A}z|^2 d\sigma dt \\
&\quad\;+2\mathbb{E}\iint_\Sigma \theta^2|\partial_\nu^\mathcal{A}z| |F_2| d\sigma dt +2\mathbb{E}\iint_\Sigma \theta^2|z_\Gamma| |\partial_\nu^\mathcal{A}z| |\ell_t| d\sigma dt.
    \end{aligned}
\end{align}
Notice that $|\ell_t|\leq C\lambda Te^{2\mu\|\psi\|_\infty}\varphi^2$, and using Young's inequality in the right-hand side of \eqref{ineqqa178}, we find that
\begin{align}\label{ineq6}
    \begin{aligned}
\textbf{I}_{4}&\leq C\lambda^2\mu^2\mathbb{E}\iint_\Sigma \theta^2\varphi^2 z_\Gamma^2 d\sigma dt +C\mathbb{E}\iint_\Sigma \theta^2|\nabla_\Gamma z_\Gamma|^2 d\sigma dt+C\mathbb{E}\iint_\Sigma \theta^2 |\partial_\nu^\mathcal{A}z|^2 d\sigma dt
\\
&\quad\;+C\mathbb{E}\iint_\Sigma \theta^2 F_2^2 d\sigma dt+C\lambda^2\mu Te^{2\mu\|\psi\|_\infty}\mathbb{E}\iint_\Sigma \theta^2\varphi^3z_\Gamma^2   d\sigma dt\\
&\quad\;+CTe^{2\mu\|\psi\|_\infty}\mathbb{E}\iint_\Sigma \theta^2\varphi |\partial_\nu^\mathcal{A} z_\Gamma|^2   d\sigma dt.
    \end{aligned}
\end{align}
We also have that
\begin{align*}
\textbf{I}_5=\begin{aligned}
\mathbb{E}\iint_\Sigma\theta^2\widetilde{A}\widehat{Z}^2d\sigma dt=\mathbb{E}\iint_\Sigma\theta^2\left[-\frac{2}{\beta_0}M\lambda\mu\varphi\partial_\nu^\mathcal{A}\psi+\lambda T\varphi^2O(e^{2\mu\|\psi\|_\infty})\right]\widehat{Z}^2d\sigma dt,
    \end{aligned}
\end{align*}
which leads to
\begin{align}\label{ineq7sec}
    \begin{aligned}
\textbf{I}_5\leq C\lambda\mu T^2\mathbb{E}\iint_\Sigma\theta^2\varphi^2\widehat{Z}^2 d\sigma dt+C\lambda Te^{2\mu\|\psi\|_\infty}\mathbb{E}\iint_\Sigma\theta^2\varphi^2\widehat{Z}^2 d\sigma dt.
    \end{aligned}
\end{align}
Taking a large $\lambda\geq C(e^{2\mu\|\psi\|_\infty}T+T^2)$, the inequality \eqref{ineq7sec} implies hat
\begin{align}\label{ineq7}
    \begin{aligned}
\textbf{I}_5\leq C\lambda^2\mu \mathbb{E}\iint_\Sigma\theta^2\varphi^2\widehat{Z}^2 d\sigma dt.
    \end{aligned}
\end{align}
By integration by parts, it is easy to see that
\begin{align*}
\textbf{I}_6=-2\mathbb{E}\iint_\Sigma h_\Gamma\mathcal{A}_\Gamma\nabla_\Gamma h_\Gamma\cdot\nabla_\Gamma\Phi d\sigma dt&=\mathbb{E}\iint_\Sigma h_\Gamma^2\nabla_\Gamma\cdot(\mathcal{A}_\Gamma\nabla_\Gamma\Phi) d\sigma dt,
\end{align*}
which provides that
\begin{align}\label{ineq2}
\textbf{I}_6\leq C\lambda\mu\mathbb{E}\iint_\Sigma\theta^2\varphi z_\Gamma^2 dxdt.
\end{align}
We also have that
\begin{align}\label{ineq2sec}
\textbf{I}_7=-\mathbb{E}\iint_\Sigma \theta^2(\mathcal{A}_\Gamma)_t\nabla_\Gamma z_\Gamma\cdot\nabla_\Gamma z_\Gamma d\sigma dt\leq C\mathbb{E}\iint_\Sigma\theta^2|\nabla_\Gamma z_\Gamma|^2 d\sigma dt.
\end{align}
It is not difficult to see that
\begin{align}\label{ineq28}
\textbf{I}_8=2\mathbb{E}\iint_\Sigma \theta^2\partial_\nu^\mathcal{A}z\nabla_\Gamma\cdot(\mathcal{A}_\Gamma\nabla_\Gamma z_\Gamma) d\sigma dt\leq \mathbb{E}\iint_\Sigma \theta^2|\partial_\nu^\mathcal{A}z|^2 d\sigma dt+\mathbb{E}\iint_\Sigma \theta^2|\nabla_\Gamma z_\Gamma|^2 d\sigma dt.
\end{align}
On the other hand, we have that
\begin{align*}
\textbf{I}_9=2\mathbb{E}\iint_\Sigma \theta^2\widetilde{A}z_\Gamma\partial_\nu^\mathcal{A}z d\sigma dt&=2\mathbb{E}\iint_\Sigma \theta^2\left[-\frac{2}{\beta_0}M\lambda\mu\varphi\partial_\nu^\mathcal{A}\psi+\lambda T\varphi^2O(e^{2\mu\|\psi\|_\infty})\right]z_\Gamma\partial_\nu^\mathcal{A}z d\sigma dt.
\end{align*}
It follows that
\begin{align*}
\textbf{I}_9\leq C\lambda\mu\mathbb{E}\iint_\Sigma \theta^2\varphi|z_\Gamma||\partial_\nu^\mathcal{A}z|d\sigma dt+C\lambda Te^{2\mu\|\psi\|_\infty}\mathbb{E}\iint_\Sigma \theta^2\varphi^2|z_\Gamma||\partial_\nu^\mathcal{A}z|d\sigma dt,
\end{align*}
which implies that for large $\lambda\geq C(e^{2\mu\|\psi\|_\infty}T+T^2)$,
\begin{align}\label{ineqfisec}
\textbf{I}_9\leq C\lambda^2\mu\mathbb{E}\iint_\Sigma\theta^2\varphi^2|z_\Gamma||\partial_\nu^\mathcal{A}z|d\sigma dt.
\end{align}
Applying Young's inequality for \eqref{ineqfisec}, we derive that
\begin{align}\label{ineqfi}
\textbf{I}_9\leq C\lambda^3\mu^2\mathbb{E}\iint_\Sigma \theta^2\varphi^3 z_\Gamma^2 d\sigma dt+C\lambda\mathbb{E}\iint_\Sigma \theta^2\varphi |\partial_\nu^\mathcal{A}z|^2 d\sigma dt.
\end{align}
Combining \eqref{5.121301seces1}, \eqref{ineq1}, \eqref{ineq2}, \eqref{ineq2sec}, \eqref{ineq3}, \eqref{ineq4}, \eqref{ineq5}, \eqref{ineq6}, \eqref{ineq7}, \eqref{ineq8}, \eqref{ineq28}, \eqref{ineqfi}, and taking a large enough $\mu\geq C$ and $\lambda\geq C(e^{2\mu\|\psi\|_\infty}T+T^2)$, we conclude that
\begin{align}\label{5.121301seces1sec}
\begin{aligned}
&2\mathbb{E}\iint_Q\sum_{j,k=1}^N c^{jk}h_{x_j}h_{x_k} dxdt+\mathbb{E}\iint_Q Bh^2 dxdt\\
&+\lambda^3\mu^3\mathbb{E}\iint_\Sigma\theta^2\varphi^3 z_\Gamma^2d\sigma dt+\lambda\mu\mathbb{E}\iint_\Sigma\theta^2\varphi|\nabla_\Gamma z_\Gamma|^2 d\sigma dt\\
&\leq C\mathbb{E}\iint_Q\theta^2 F_1^2 dxdt+C\mathbb{E}\iint_\Sigma\theta^2 F_2^2d\sigma dt\\
&\quad\;+C\mathbb{E}\iint_Q\theta^2AZ^2 dxdt+C\lambda^2\mu\mathbb{E}\iint_\Sigma\theta^2\varphi^2\widehat{Z}^2d\sigma dt.
\end{aligned}
\end{align}
\textbf{Step 3. Estimating the non-boundary terms.}\\
From Lemma \ref{thm5.2}, we have that for large $\mu\geq C$ and $\lambda\geq CT^2$,
\begin{align*}
C\mathbb{E}\iint_Q\theta^2AZ^2 dxdt=C\mathbb{E}\iint_Q\theta^2\left[\lambda^2 \mu^2 \varphi^2 \sum_{j,k=1}^N a^{jk} \psi_{x_j} \psi_{x_k} + \lambda \varphi O(\mu^2) + \lambda T \varphi^2 O(e^{2\mu \|\psi\|_\infty})\right]Z^2 dxdt.
\end{align*}
Taking a large $\lambda\geq C(e^{2\mu\|\psi\|_\infty}T+T^2)$, it follows that
\begin{align}\label{ineqj1}
C\mathbb{E}\iint_Q\theta^2AZ^2 dxdt\leq C\lambda^2 \mu^2\mathbb{E}\iint_Q\theta^2 \varphi^2 Z^2 dxdt.
\end{align}
Since $\displaystyle\min_{x \in \overline{G \setminus G_1}} | \nabla \psi(x) |^2>0$, by Lemma \ref{thm5.2}, we obtain for a large \( \mu \geq C \) and \( \lambda \geq C T^2 \),
\begin{align}\label{5.212201}
\begin{aligned}
&\, 2\mathbb{E} \int_0^T \int_{G \setminus G_1} \sum_{j,k=1}^N c^{jk} h_{x_j} h_{x_k} \, dx  dt + \mathbb{E} \int_0^T \int_{G \setminus G_1} B h^2 \, dx  dt \\
&\geq 2\beta_0^2 \mathbb{E} \int_0^T \int_{G \setminus G_1} \bigg[ \left( \lambda \mu^2 \varphi \min_{x \in \overline{G \setminus G_1}} | \nabla \psi |^2 + \lambda \varphi O(\mu) \right) | \nabla h |^2 \\
&\quad + \left( \lambda^3 \mu^4 \varphi^3 \min_{x \in \overline{G \setminus G_1}} | \nabla \psi |^4 + \lambda^3 \varphi^3 O(\mu^4) + \lambda^2 T \varphi^3 O(\mu^2 e^{2 \mu \|\psi\|_\infty}) \right) h^2 \bigg] \, dx  dt.
\end{aligned}
\end{align}
From \eqref{5.212201}, we conclude for a large \( \mu \geq C \) and \( \lambda \geq C ( e^{2 \mu \|\psi\|_\infty} T + T^2 ) \),
\begin{align}
\begin{aligned}
&\, 2 \mathbb{E} \int_0^T \int_{G \setminus G_1} \sum_{j,k=1}^N c^{jk} h_{x_j} h_{x_k} \, dx  dt + \mathbb{E} \int_0^T \int_{G \setminus G_1} B h^2 \, dx  dt \\
\label{5.232404} &\geq C \lambda \mu^2 \mathbb{E} \int_0^T \int_{G \setminus G_1} \varphi \left( | \nabla h |^2 + \lambda^2 \mu^2 \varphi^2 h^2 \right) \, dx  dt.
\end{aligned}
\end{align}
Notice that
\begin{equation}\label{5.242505}
\frac{1}{C} \theta^2 \left( | \nabla z |^2 + \lambda^2 \mu^2 \varphi^2 z^2 \right) \leq | \nabla h |^2 + \lambda^2 \mu^2 \varphi^2 h^2 \leq C \theta^2 \left( | \nabla z |^2 + \lambda^2 \mu^2 \varphi^2 z^2 \right).
\end{equation}
Combining \eqref{5.121301seces1sec}, \eqref{ineqj1}, \eqref{5.232404} and \eqref{5.242505}, we conclude for large enough \( \mu \geq C \) and \( \lambda \geq C \left( e^{2 \mu \|\psi\|_\infty} T + T^2 \right) \),
\begin{align}\label{5.212101}
\begin{aligned}
&\, \lambda \mu^2 \mathbb{E} \iint_Q \theta^2 \varphi \left( | \nabla z |^2 + \lambda^2 \mu^2 \varphi^2 z^2 \right)  dx  dt \\
&+\lambda^3\mu^3\mathbb{E}\iint_\Sigma\theta^2\varphi^3 z_\Gamma^2d\sigma dt+\lambda\mu\mathbb{E}\iint_\Sigma\theta^2\varphi|\nabla_\Gamma z_\Gamma|^2 d\sigma dt\\
&
= \lambda \mu^2 \mathbb{E} \int_0^T \left( \int_{G \setminus G_1} + \int_{G_1} \right) \theta^2 \varphi \left( | \nabla z |^2 + \lambda^2 \mu^2 \varphi^2 z^2 \right)  dx  dt \\
&\quad\;+\lambda^3\mu^3\mathbb{E}\iint_\Sigma\theta^2\varphi^3 z_\Gamma^2d\sigma dt+\lambda\mu\mathbb{E}\iint_\Sigma\theta^2\varphi|\nabla_\Gamma z_\Gamma|^2 d\sigma dt\\
&\leq C\mathbb{E}\iint_Q\theta^2 F_1^2 dxdt+C\mathbb{E}\iint_\Sigma\theta^2 F_2^2d\sigma dt\\
&\quad\;+C\lambda^2 \mu^2\mathbb{E}\iint_Q\theta^2 \varphi^2 Z^2 dxdt+\lambda^2\mu\mathbb{E}\iint_\Sigma\theta^2\varphi^2\widehat{Z}^2d\sigma dt\\
&\quad\; + \lambda \mu^2 \mathbb{E} \int_0^T \int_{G_1} \theta^2 \varphi \left( | \nabla z |^2 + \lambda^2 \mu^2 \varphi^2 z^2 \right)  dx  dt.
\end{aligned}
\end{align}
Finally, to eliminate the gradient term on the right-hand side of \eqref{5.212101}, we proceed exactly as in the proof of \cite[Theorem 9.39]{lu2021mathematical} by choosing a cut-off function \( \zeta \in C^\infty_0(\mathcal{B}; [0, 1]) \) with \( \zeta \equiv 1 \) in \( G_1 \), then computing \( d(\theta^2 \varphi \zeta^2 z^2) \) and integrating the equality over \( Q \), and taking the expectation on both sides. We arrive at
\begin{equation}\label{5.222201}
\mathbb{E} \int_0^T \int_{G_1} \theta^2 \varphi | \nabla z |^2 \, dx  dt \leq C \mathbb{E} \int_0^T\int_\mathcal{B} \theta^2 \left( \frac{1}{\lambda^2 \mu^2} F_1^2 + \lambda^2 \mu^2 \varphi^3 z^2 \right)  dxdt.
\end{equation}
Combining \eqref{5.212101} and \eqref{5.222201}, we deduce the desired Carleman inequality \eqref{3.5}. This concludes the proof of Lemma \ref{lm2.2}.
\end{proof}

\section{Main Carleman Estimate: Proof of Theorem~\ref{thm1.1}}\label{sec2}
This section is devoted to proving the main Carleman estimate in Theorem \ref{thm1.1}. Based on the result of Lemma \ref{lm2.2}, we now proceed to prove Theorem \ref{thm1.1}. First, we establish a controllability result for the following controlled forward stochastic parabolic equation with dynamic boundary conditions and source terms:
\begin{equation}\label{3.6}
\begin{cases}
\begin{array}{ll}
dy - \nabla\cdot(\mathcal{A}\nabla y) \,dt = (\lambda^3\mu^4\theta^2\varphi^3z + \mathbbm{1}_\mathcal{B}u) \,dt + v_1 \,dW(t) & \textnormal{in} \, Q, \\
dy_\Gamma - \nabla_\Gamma\cdot(\mathcal{A}_\Gamma\nabla_\Gamma y_\Gamma) \,dt + \partial_\nu^\mathcal{A} y \, dt = \lambda^3\mu^3\theta^2\varphi^3z_\Gamma \,dt + v_2 \, dW(t) & \textnormal{on} \, \Sigma, \\
y_\Gamma = y|_\Gamma & \textnormal{on} \, \Sigma, \\
(y, y_\Gamma)|_{t=0} = (0, 0) & \textnormal{in} \, G \times \Gamma,
\end{array}
\end{cases}
\end{equation}
where \((u, v_1, v_2) \in L^2_\mathcal{F}(0,T; L^2(\mathcal{B})) \times L^2_\mathcal{F}(0,T; L^2(G)) \times L^2_\mathcal{F}(0,T; L^2(\Gamma))\) are the controls of the system, and \((z, z_\Gamma; Z, \widehat{Z})\) represents the state of equation \eqref{1.1gen}. The functions \(\theta\) and \(\varphi\) are the weight functions defined in \eqref{3.2}. We now state the following controllability result.
\begin{prop}\label{prop3.1}
There exists a triple of controls \((\widehat{u}, \widehat{v}_1, \widehat{v}_2) \in L^2_\mathcal{F}(0,T; L^2(\mathcal{B})) \times L^2_\mathcal{F}(0,T; L^2(G)) \times L^2_\mathcal{F}(0,T; L^2(\Gamma))\) such that the corresponding solution \((\widehat{y}, \widehat{y}_\Gamma)\) of \eqref{3.6} satisfies
$$ (\widehat{y}(T, \cdot), \widehat{y}_\Gamma(T, \cdot)) = (0, 0) \quad \textnormal{in} \,\; G \times \Gamma, \,\; \textnormal{a.s.} $$ 
Moreover, there exists a constant \(C > 0\), depending only on \(G\), \(\mathcal{B}\), $\beta_0$, $M$, and $M_\Gamma$, such that the following inequality holds:
\begin{align}\label{3.7}
\begin{aligned}
&\, \lambda^{-3}\mu^{-4} \mathbb{E} \iint_Q \theta^{-2} \varphi^{-3} |\widehat{u}|^2 \, dx  dt + \lambda^{-2} \mu^{-2} \mathbb{E} \iint_Q \theta^{-2} \varphi^{-2} |\widehat{v}_1|^2 \, dx  dt \\
&+ \lambda^{-2} \mu^{-2} \mathbb{E} \iint_\Sigma \theta^{-2} \varphi^{-2} |\widehat{v}_2|^2 \, d\sigma  dt + \mathbb{E} \iint_Q \theta^{-2} |\widehat{y}|^2 \, dx  dt + \mathbb{E} \iint_\Sigma \theta^{-2} |\widehat{y}_\Gamma|^2 \, d\sigma  dt \\
&+ \lambda^{-2} \mu^{-2} \mathbb{E} \iint_Q \theta^{-2} \varphi^{-2} |\nabla \widehat{y}|^2 \, dx  dt + \lambda^{-2} \mu^{-2} \mathbb{E} \iint_\Sigma \theta^{-2} \varphi^{-2} |\nabla_\Gamma \widehat{y}_\Gamma|^2 \, d\sigma  dt \\
& \leq C \bigg[ \lambda^3 \mu^4 \mathbb{E} \iint_Q \theta^2 \varphi^3 |z|^2 \, dx  dt + \lambda^3 \mu^3 \mathbb{E} \iint_\Sigma \theta^2 \varphi^3 |z_\Gamma|^2 \, d\sigma  dt \bigg],
\end{aligned}
\end{align}
for large \(\mu \geq C\) and \(\lambda \geq C(e^{2 \mu \|\psi\|_\infty} T + T^2)\).
\end{prop}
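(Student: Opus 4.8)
The plan is to prove Proposition \ref{prop3.1} by a penalized (Tikhonov) dual optimal control argument, in which the Carleman estimate of Lemma \ref{lm2.2} plays the role of the observability inequality that closes the cost bound. Throughout, $(z,z_\Gamma)$ is the \emph{given} solution of \eqref{1.1}, so the source terms $\lambda^3\mu^4\theta^2\varphi^3 z$ and $\lambda^3\mu^3\theta^2\varphi^3 z_\Gamma$ in \eqref{3.6} are fixed inhomogeneities. For each $\varepsilon>0$ I would minimize, over the weighted control space $L^2_\mathcal{F}(0,T;L^2(G_0))\times L^2_\mathcal{F}(0,T;L^2(G))\times L^2_\mathcal{F}(0,T;L^2(\Gamma))$, the functional
\[
J_\varepsilon(u,v_1,v_2)=\tfrac12\Big[\lambda^{-3}\mu^{-4}\mathbb{E}\iint_Q\theta^{-2}\varphi^{-3}u^2 + \lambda^{-2}\mu^{-2}\mathbb{E}\iint_Q\theta^{-2}\varphi^{-2}v_1^2 + \lambda^{-2}\mu^{-2}\mathbb{E}\iint_\Sigma\theta^{-2}\varphi^{-2}v_2^2\Big] + \tfrac{1}{2\varepsilon}\mathbb{E}\|(y(T),y_\Gamma(T))\|_{\mathbb{L}^2}^2,
\]
where $(y,y_\Gamma)$ is the solution of \eqref{3.6} driven by $(u,v_1,v_2)$. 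Since the control-to-terminal-state map is affine and continuous, $J_\varepsilon$ is strictly convex, coercive and lower semicontinuous, hence admits a unique minimizer $(u_\varepsilon,v_{1,\varepsilon},v_{2,\varepsilon})$.

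Next I would write the first-order optimality system. Introducing the adjoint state $(z^\varepsilon,z^\varepsilon_\Gamma;Z^\varepsilon,\widehat Z^\varepsilon)$ as the solution of the backward equation \eqref{1.1} with zero source terms ($F_1=F_2=0$, $F=F_\Gamma=0$) and terminal datum $\tfrac1\varepsilon(y_\varepsilon(T),y_{\Gamma,\varepsilon}(T))$, I would pair the linearized forward equation against this adjoint via It\^o's formula. Here the bulk and surface elliptic operators cancel thanks to the symmetry of $\mathcal{A}$ and $\mathcal{A}_\Gamma$ together with the $\pm\partial_\nu^\mathcal{A}$ coupling, which is precisely what makes \eqref{1.1} the genuine adjoint of \eqref{3.6}. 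This yields the optimality relations
\[
u_\varepsilon=-\lambda^3\mu^4\theta^2\varphi^3 z^\varepsilon\big|_{G_0},\qquad v_{1,\varepsilon}=-\lambda^2\mu^2\theta^2\varphi^2 Z^\varepsilon,\qquad v_{2,\varepsilon}=-\lambda^2\mu^2\theta^2\varphi^2\widehat Z^\varepsilon,
\]
and one checks that the weighted control cost equals $\mathcal{I}_\varepsilon:=\lambda^3\mu^4\mathbb{E}\iint_{Q_0}\theta^2\varphi^3(z^\varepsilon)^2+\lambda^2\mu^2\mathbb{E}\iint_Q\theta^2\varphi^2(Z^\varepsilon)^2+\lambda^2\mu^2\mathbb{E}\iint_\Sigma\theta^2\varphi^2(\widehat Z^\varepsilon)^2$.

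The crucial step is the duality identity obtained by pairing the optimal forward solution against $(z^\varepsilon,z^\varepsilon_\Gamma;Z^\varepsilon,\widehat Z^\varepsilon)$ and substituting the optimality relations, namely
\[
\tfrac1\varepsilon\mathbb{E}\|(y_\varepsilon(T),y_{\Gamma,\varepsilon}(T))\|_{\mathbb{L}^2}^2 + \mathcal{I}_\varepsilon = \lambda^3\mu^4\mathbb{E}\iint_Q\theta^2\varphi^3 z\,z^\varepsilon\,dx\,dt + \lambda^3\mu^3\mathbb{E}\iint_\Sigma\theta^2\varphi^3 z_\Gamma z^\varepsilon_\Gamma\,d\sigma\,dt.
\]
I would estimate the right-hand side by Young's inequality and then bound the resulting \emph{full-domain} weights $\lambda^3\mu^4\mathbb{E}\iint_Q\theta^2\varphi^3(z^\varepsilon)^2+\lambda^3\mu^3\mathbb{E}\iint_\Sigma\theta^2\varphi^3(z^\varepsilon_\Gamma)^2$ by $C\,\mathcal{I}_\varepsilon$, applying Lemma \ref{lm2.2} to $(z^\varepsilon,z^\varepsilon_\Gamma;Z^\varepsilon,\widehat Z^\varepsilon)$ (with $F_1=F_2=0$); the mild weight discrepancy $\lambda^2\mu$ versus $\lambda^2\mu^2$ in the $\widehat Z$ term is harmless since $\mu\geq1$. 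Choosing the Young parameter small enough absorbs $\tfrac12\mathcal{I}_\varepsilon$ back to the left, yielding the $\varepsilon$-uniform bound $\tfrac1\varepsilon\mathbb{E}\|(y_\varepsilon(T),y_{\Gamma,\varepsilon}(T))\|_{\mathbb{L}^2}^2+\|(u_\varepsilon,v_{1,\varepsilon},v_{2,\varepsilon})\|^2\leq C\mathcal{R}$, where $\mathcal{R}$ denotes the right-hand side of \eqref{3.7}.

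Finally I would pass to the limit $\varepsilon\to0$: the controls are uniformly bounded in the weighted Hilbert space, so a subsequence converges weakly to $(\widehat u,\widehat v_1,\widehat v_2)$, the associated solutions converge by the affine continuity of the state map, and the bound $\mathbb{E}\|(y_\varepsilon(T),\cdot)\|_{\mathbb{L}^2}^2\leq C\varepsilon\mathcal{R}\to0$ forces $(\widehat y(T,\cdot),\widehat y_\Gamma(T,\cdot))=(0,0)$ a.s.; weak lower semicontinuity of the norm gives the control part of \eqref{3.7}. The remaining weighted state terms $\mathbb{E}\iint_Q\theta^{-2}\widehat y^2$, $\mathbb{E}\iint_\Sigma\theta^{-2}\widehat y_\Gamma^2$ and the corresponding gradient terms follow from a weighted energy estimate for the forward equation \eqref{3.6}, bounding them by the control and source norms, all of which are already controlled by $C\mathcal{R}$. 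I expect the main obstacle to be the rigorous justification of the adjoint/duality identity in the stochastic dynamic-boundary framework---verifying via It\^o's formula that the bulk and surface elliptic contributions, including all boundary terms, cancel exactly---together with the precise matching of the weight powers so that the right-hand side of Lemma \ref{lm2.2} coincides with $\mathcal{I}_\varepsilon$ and the absorption can be carried out.
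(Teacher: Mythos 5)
Your overall architecture (penalized optimal control problem, Euler--Lagrange characterization through a backward adjoint system, duality identity plus the Carleman estimate of Lemma \ref{lm2.2} to absorb and close, then weak limit as $\varepsilon\to 0$) is exactly the paper's strategy, and the parts of \eqref{3.7} concerning the controls and the null controllability conclusion would go through as you describe. There is, however, one genuine gap: your treatment of the weighted state terms $\mathbb{E}\iint_Q\theta^{-2}\widehat y^2\,dx\,dt$, $\mathbb{E}\iint_\Sigma\theta^{-2}\widehat y_\Gamma^2\,d\sigma\,dt$ and the corresponding gradient terms.

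You propose to recover these a posteriori from ``a weighted energy estimate for the forward equation''. This does not work. Computing $d(\theta^{-2}y^2)$ by It\^o's formula produces the term $\mathbb{E}\iint_Q(\theta^{-2})_t\,y^2\,dx\,dt$ with $(\theta^{-2})_t=-2\lambda\alpha_t\theta^{-2}$ and $|\alpha_t|\le CTe^{2\mu|\psi|_\infty}\varphi^2$; this contribution is of size $\lambda Te^{2\mu|\psi|_\infty}\varphi^2\theta^{-2}y^2$, is not sign-definite (it changes sign at $t=T/2$), and dominates $\theta^{-2}y^2$ by the unbounded factor $\lambda Te^{2\mu|\psi|_\infty}\varphi^2$, so it can neither be absorbed nor handled by Gronwall (the weight blows up at both endpoints $t=0,T$). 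The paper's way out is to build these norms into the penalized functional: $J_\varepsilon$ contains $\tfrac12\mathbb{E}\iint_Q\theta_\varepsilon^{-2}y^2$ and $\tfrac12\mathbb{E}\iint_\Sigma\theta_\varepsilon^{-2}y_\Gamma^2$ (with the regularized weight $\theta_\varepsilon$ of \eqref{functalphaepsilon}, needed for finiteness), which changes the adjoint system \eqref{3.010} to carry the drift sources $-\theta_\varepsilon^{-2}y_\varepsilon$ and $-\theta_\varepsilon^{-2}y_{\varepsilon,\Gamma}$ and makes the weighted state norms appear on the left of the duality identity \eqref{3.011} for free. The price --- nonzero $F_1,F_2$ in the Carleman estimate --- is paid using $\theta\theta_\varepsilon^{-1}\le 1$, after which those source contributions are absorbed by the very state norms on the left. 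Only once $\mathbb{E}\iint_Q\theta_\varepsilon^{-2}y_\varepsilon^2$ is under control does the weighted energy identity for $d(\theta_\varepsilon^{-2}\varphi^{-2}y_\varepsilon^2)$ yield the gradient terms, because the weight-derivative term there is bounded by $CT\lambda^{-1}\mu^{-2}e^{2\mu|\psi|_\infty}\mathbb{E}\iint\theta_\varepsilon^{-2}y_\varepsilon^2$, which is admissible precisely thanks to the already-established zeroth-order bound and the constraint $\lambda\ge C e^{2\mu|\psi|_\infty}T$. So you should modify your functional to include the weighted state terms; with that change the rest of your argument closes.
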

\begin{proof} For clarity, we present the proof in three steps.\\
\textbf{Step 1.} Fix $\varepsilon>0$ and define the functions:
\begin{align}\label{functalphaepsilon}
\theta_\varepsilon=e^{\lambda\alpha_\varepsilon},\qquad\alpha_\varepsilon\equiv\alpha_\varepsilon(t,x)=(e^{\mu\psi(x)}-e^{2\mu\|\psi\|_\infty})((t+\varepsilon)(T-t+\varepsilon))^{-1}.
\end{align}
It is easy to see that $\theta_\varepsilon\geq\theta$. Consider the following optimal control problem
\begin{equation}\label{3.08}
    \inf_{(u,v_1,v_2)\in\mathcal{U}}\,J_\varepsilon(u,v_1,v_2),
\end{equation}
where 
\begin{align*}
J_\varepsilon(u,v_1,v_2)=&\,\frac{1}{2}\mathbb{E}\int_0^T\int_{\mathcal{B}} \lambda^{-3}\mu^{-4}\theta^{-2}\varphi^{-3}|u|^2dxdt+\frac{1}{2}\mathbb{E}\iint_Q \lambda^{-2}\mu^{-2}\theta^{-2}\varphi^{-2}|v_1|^2dxdt\\
 &+\frac{1}{2}\mathbb{E}\iint_\Sigma \lambda^{-2}\mu^{-2}\theta^{-2}\varphi^{-2}|v_2|^2d\sigma dt+\frac{1}{2}\mathbb{E}\iint_Q \theta^{-2}_\varepsilon |y|^2dxdt+\frac{1}{2}\mathbb{E}\iint_\Sigma \theta^{-2}_\varepsilon |y_\Gamma|^2d\sigma dt\\
 &+\frac{1}{2\varepsilon}\mathbb{E}\int_G \vert y(T)\vert^2dx+\frac{1}{2\varepsilon}\mathbb{E}\int_\Gamma \vert y_\Gamma(T)\vert^2d\sigma,
\end{align*}
and
\begin{align*}
\mathcal{U}=\bigg\{(&u,v_1,v_2)\in L^2_\mathcal{F}(0,T;L^2(\mathcal{B}))\times L^2_\mathcal{F}(0,T;L^2(G))\times L^2_\mathcal{F}(0,T;L^2(\Gamma)):\\
&\mathbb{E}\int_0^T\int_{\mathcal{B}} \theta^{-2}\varphi^{-3}|u|^2dxdt<\infty\,,\quad\mathbb{E}\iint_Q \theta^{-2}\varphi^{-2}|v_1|^2dxdt<\infty\,,\quad\mathbb{E}\iint_\Sigma \theta^{-2}\varphi^{-2}|v_2|^2d\sigma dt<\infty\bigg\}.
\end{align*}
It is easy to see that $J_\varepsilon$ is well-defined, continuous, strictly convex, and coercive. Therefore, the problem \eqref{3.08} admits a unique optimal solution $(u_\varepsilon,v_{\varepsilon,1},v_{\varepsilon,2})\in\mathcal{U}$. Moreover, by classical arguments: Euler-Lagrange equation and optimality system (see, e.g., \cite{imanuvilov2003carleman,lions1972some}), the optimal solution $(u_\varepsilon,v_{\varepsilon,1},v_{\varepsilon,2})$ can be characterized by:
\begin{equation}\label{3.09}
 u_\varepsilon=-\mathbbm{1}_{\mathcal{B}}\lambda^3\mu^4\theta^2\varphi^3r_\varepsilon\,\,,\quad\quad v_{\varepsilon,1}=-\lambda^2\mu^2\theta^2\varphi^2R_{\varepsilon,1}\,\,,\quad\quad v_{\varepsilon,2}=-\lambda^2\mu^2\theta^2\varphi^2 R_{\varepsilon,2},
 \end{equation}
where $(r_\varepsilon,r_{\varepsilon,\Gamma};R_{\varepsilon,1},R_{\varepsilon,2})$ is the solution of the backward equation
\begin{equation}\label{3.010}
\begin{cases}
\begin{array}{ll}
dr_\varepsilon + \nabla\cdot(\mathcal{A}\nabla r_\varepsilon) \,dt = -\theta^{-2}_\varepsilon y_\varepsilon \,dt + R_{\varepsilon,1} \,dW(t) &\textnormal{in}\,\,Q,\\
dr_{\varepsilon,\Gamma} + \nabla_\Gamma\cdot(\mathcal{A}_\Gamma\nabla_\Gamma r_{\varepsilon,\Gamma}) \,dt -\partial_\nu^\mathcal{A} r_\varepsilon dt = -\theta^{-2}_\varepsilon y_{\varepsilon,\Gamma} \,dt + R_{\varepsilon,2} \,dW(t) &\textnormal{on}\,\,\Sigma,\\
r_{\varepsilon,\Gamma}(t,x)=r_\varepsilon\vert_\Gamma(t,x)  &\textnormal{on}\,\,\Sigma,\\
\big(r_\varepsilon,r_{\varepsilon,\Gamma}\big)\vert_{{t=T}}=(\frac{1}{\varepsilon}y_\varepsilon(T),\frac{1}{\varepsilon}y_{\varepsilon,\Gamma}(T)) &\textnormal{in}\,\,G\times\Gamma,
			\end{array}
		\end{cases}
\end{equation}
with $(y_\varepsilon,y_{\varepsilon,\Gamma})$ is the solution of \eqref{3.6} associated to the controls $(u_\varepsilon,v_{\varepsilon,1},v_{\varepsilon,2})$.\\
\textbf{Step 2.} Differentiating $d\langle(y_\varepsilon,y_{\varepsilon,\Gamma}),(r_\varepsilon,r_{\varepsilon,\Gamma})\rangle_{\mathbb{L}^2}$ by Itô's formula, integrating the equality on $(0,T)$ and recalling \eqref{3.09}, we obtain that
\begin{align}\label{3.011}
\begin{aligned}
&\,\frac{1}{\varepsilon}\mathbb{E}\int_G \vert y_\varepsilon(T)\vert^2dx+\frac{1}{\varepsilon}\mathbb{E}\int_\Gamma \vert y_{\varepsilon,\Gamma}(T)\vert^2d\sigma+\mathbb{E}\iint_{Q}\theta_\varepsilon^{-2}y_\varepsilon^2dxdt+\mathbb{E}\iint_{\Sigma}\theta_\varepsilon^{-2}y_{\varepsilon,\Gamma}^2d\sigma dt\\
&+\lambda^3\mu^4\mathbb{E}\int_0^T\int_{\mathcal{B}}\theta^2\varphi^3r_\varepsilon^2dxdt+\lambda^2\mu^2\mathbb{E}\iint_{Q}\theta^2\varphi^2R_{\varepsilon,1}^2dxdt+\lambda^2\mu^2\mathbb{E}\iint_{\Sigma}\theta^2\varphi^2 R_{\varepsilon,2}^2d\sigma dt\\
&=\lambda^3\mu^4\mathbb{E}\iint_{Q}\theta^2\varphi^3 zr_\varepsilon dxdt+\lambda^3\mu^3\mathbb{E}\iint_{\Sigma}\theta^2\varphi^3 z_{\Gamma}r_{\varepsilon,\Gamma} d\sigma dt.
\end{aligned}
\end{align}
By Young's inequality, we get from \eqref{3.011} that for all $\rho>0$,
\begin{align}\label{3.012}
\begin{aligned}
&\,\frac{1}{\varepsilon}\mathbb{E}\int_G \vert y_\varepsilon(T)\vert^2dx+\frac{1}{\varepsilon}\mathbb{E}\int_\Gamma \vert y_{\varepsilon,\Gamma}(T)\vert^2d\sigma+\mathbb{E}\iint_{Q}\theta_\varepsilon^{-2}y_\varepsilon^2dxdt+\mathbb{E}\iint_{\Sigma}\theta_\varepsilon^{-2}y_{\varepsilon,\Gamma}^2d\sigma dt\\
&+\lambda^3\mu^4\mathbb{E}\int_0^T\int_{\mathcal{B}}\theta^2\varphi^3r_\varepsilon^2dxdt+\lambda^2\mu^2\mathbb{E}\iint_{Q}\theta^2\varphi^2R_{\varepsilon,1}^2dxdt+\lambda^2\mu^2\mathbb{E}\iint_{\Sigma}\theta^2\varphi^2 R_{\varepsilon,2}^2d\sigma dt\\
&\leq \rho\bigg[\lambda^3\mu^4\mathbb{E}\iint_{Q}\theta^2\varphi^3 r_\varepsilon^2 dxdt+\lambda^3\mu^3\mathbb{E}\iint_{\Sigma}\theta^2\varphi^3 r_{\varepsilon,\Gamma}^2 d\sigma\bigg]\\
& \;\;\;+\frac{1}{4\rho}\bigg[\lambda^3\mu^4\mathbb{E}\iint_{Q}\theta^2\varphi^3 z^2 dxdt+\lambda^3\mu^3\mathbb{E}\iint_{\Sigma}\theta^2\varphi^3 z_{\Gamma}^2 d\sigma dt\bigg].
\end{aligned}
\end{align}
Using the Carleman estimate \eqref{3.5} for solutions of \eqref{3.010} and noting that $\theta\theta_\varepsilon^{-1}\leq1$, then there exists a constant $C=C(G,\mathcal{B},\beta_0, M,M_\Gamma)>0$ such that for all $\mu\geq C$ and $\lambda\geq C(e^{2\mu\|\psi\|_\infty}T+T^2)$, the inequality \eqref{3.012} implies that
\begin{align}\label{3.013}
\begin{aligned}
&\,\frac{1}{\varepsilon}\mathbb{E}\int_G \vert y_\varepsilon(T)\vert^2dx+\frac{1}{\varepsilon}\mathbb{E}\int_\Gamma \vert y_{\varepsilon,\Gamma}(T)\vert^2d\sigma+\mathbb{E}\iint_{Q}\theta_\varepsilon^{-2}y_\varepsilon^2dxdt+\mathbb{E}\iint_{\Sigma}\theta_\varepsilon^{-2}y_{\varepsilon,\Gamma}^2d\sigma dt\\
&+\lambda^3\mu^4\mathbb{E}\int_0^T\int_{\mathcal{B}}\theta^2\varphi^3r_\varepsilon^2dxdt+\lambda^2\mu^2\mathbb{E}\iint_{Q}\theta^2\varphi^2R_{\varepsilon,1}^2dxdt+\lambda^2\mu^2\mathbb{E}\iint_{\Sigma}\theta^2\varphi^2 R_{\varepsilon,2}^2d\sigma dt\\
&\leq C\rho\bigg[\lambda^3\mu^4\mathbb{E}\int_0^T\int_{\mathcal{B}}\theta^2\varphi^3 r_\varepsilon^2 dxdt+\mathbb{E}\iint_{Q}\theta_\varepsilon^{-2}y_\varepsilon^2dxdt+\mathbb{E}\iint_{\Sigma}\theta_\varepsilon^{-2}y_{\varepsilon,\Gamma}^2d\sigma dt\\
&\qquad\;\quad+\lambda^2\mu^2\mathbb{E}\iint_{Q}\theta^2\varphi^2 R_{\varepsilon,1}^2 dxdt+\lambda^2\mu^2\mathbb{E}\iint_{\Sigma}\theta^2\varphi^2 R_{\varepsilon,2}^2 d\sigma dt\bigg]\\
& \;\;\;+\frac{1}{4\rho}\bigg[\lambda^3\mu^4\mathbb{E}\iint_{Q}\theta^2\varphi^3 z^2 dxdt+\lambda^3\mu^3\mathbb{E}\iint_{\Sigma}\theta^2\varphi^3 z_{\Gamma}^2 d\sigma dt\bigg].
\end{aligned}
\end{align}
Choosing a small enough $\rho>0$ in \eqref{3.013}, we obtain that
\begin{align*}
\begin{aligned}
&\,\frac{1}{\varepsilon}\mathbb{E}\int_G \vert y_\varepsilon(T)\vert^2dx+\frac{1}{\varepsilon}\mathbb{E}\int_\Gamma \vert y_{\varepsilon,\Gamma}(T)\vert^2d\sigma+\mathbb{E}\iint_{Q}\theta_\varepsilon^{-2}y_\varepsilon^2dxdt+\mathbb{E}\iint_{\Sigma}\theta_\varepsilon^{-2}y_{\varepsilon,\Gamma}^2d\sigma dt\\
&+\lambda^3\mu^4\mathbb{E}\int_0^T\int_{\mathcal{B}}\theta^2\varphi^3r_\varepsilon^2dxdt+\lambda^2\mu^2\mathbb{E}\iint_{Q}\theta^2\varphi^2R_{\varepsilon,1}^2dxdt+\lambda^2\mu^2\mathbb{E}\iint_{\Sigma}\theta^2\varphi^2 R_{\varepsilon,2}^2d\sigma dt\\
&\leq C\bigg[\lambda^3\mu^4\mathbb{E}\iint_{Q}\theta^2\varphi^3 z^2 dxdt+\lambda^3\mu^3\mathbb{E}\iint_{\Sigma}\theta^2\varphi^3 z_{\Gamma}^2 d\sigma dt\bigg].
\end{aligned}
\end{align*}
Recalling \eqref{3.09}, it follows that
\begin{align}\label{3.015}
\begin{aligned}
&\,\frac{1}{\varepsilon}\mathbb{E}\int_G \vert y_\varepsilon(T)\vert^2dx+\frac{1}{\varepsilon}\mathbb{E}\int_\Gamma \vert y_{\varepsilon,\Gamma}(T)\vert^2d\sigma+\mathbb{E}\iint_{Q}\theta_\varepsilon^{-2}y_\varepsilon^2dxdt+\mathbb{E}\iint_{\Sigma}\theta_\varepsilon^{-2}y_{\varepsilon,\Gamma}^2d\sigma dt\\
&+\lambda^{-3}\mu^{-4}\mathbb{E}\iint_{Q}\theta^{-2}\varphi^{-3}u_\varepsilon^{2}dxdt+\lambda^{-2}\mu^{-2}\mathbb{E}\iint_{Q}\theta^{-2}\varphi^{-2}v_{\varepsilon,1}^2dxdt\\
&+\lambda^{-2}\mu^{-2}\mathbb{E}\iint_{\Sigma}\theta^{-2}\varphi^{-2} v_{\varepsilon,2}^2d\sigma dt\\
&\leq C\bigg[\lambda^3\mu^4\mathbb{E}\iint_{Q}\theta^2\varphi^3 z^2 dxdt+\lambda^3\mu^3\mathbb{E}\iint_{\Sigma}\theta^2\varphi^3 z_{\Gamma}^2 d\sigma dt\bigg].
\end{aligned}
\end{align}
On the other hand, by computing $d(\theta^{-2}_\varepsilon\varphi^{-2}y_\varepsilon^2)$ with Itô's formula, integrating the equality on $Q$ and taking the expectation on both sides, we find that
\begin{align}\label{3.016}
\begin{aligned}
&\,2 \mathbb{E}\iint_Q \theta^{-2}_\varepsilon\varphi^{-2}\mathcal{A}\nabla y_\varepsilon\cdot\nabla y_\varepsilon dxdt\\
&=-2 \mathbb{E}\iint_Q y_\varepsilon\mathcal{A}\nabla y_\varepsilon\cdot\nabla(\theta_\varepsilon^{-2}\varphi^{-2}) dxdt+2\mathbb{E}\int_0^T \langle \partial_\nu^\mathcal{A} y_\varepsilon,\theta_\varepsilon^{-2}\varphi^{-2}y_{\varepsilon,\Gamma}\rangle_{H^{-1/2}(\Gamma),H^{1/2}(\Gamma)}dt
\\&\quad\,+\mathbb{E}\iint_Q (\theta^{-2}_\varepsilon\varphi^{-2})_ty_\varepsilon^2dxdt+2\lambda^3\mu^4\mathbb{E}\iint_Q \theta^{-2}_\varepsilon\theta^2\varphi y_\varepsilon z dxdt\\&\quad\,+2\mathbb{E}\int_0^T\int_{\mathcal{B}}\theta^{-2}_\varepsilon\varphi^{-2}y_\varepsilon u_\varepsilon dxdt+\mathbb{E}\iint_Q \theta^{-2}_\varepsilon\varphi^{-2}v_{\varepsilon,1}^2dxdt.
\end{aligned}
\end{align}
Similarly to \eqref{3.016}, by computing $d(\theta^{-2}_\varepsilon\varphi^{-2}y_{\varepsilon,\Gamma}^2)$, we get
\begin{align}\label{3.017}
\begin{aligned}
&\,2 \mathbb{E}\iint_\Sigma \theta^{-2}_\varepsilon\varphi^{-2}\mathcal{A}_\Gamma\nabla_\Gamma y_{\varepsilon,\Gamma}\cdot\nabla_\Gamma y_{\varepsilon,\Gamma} d\sigma dt\\
&=-2\mathbb{E}\int_0^T \langle \partial_\nu^\mathcal{A} y_\varepsilon,\theta_\varepsilon^{-2}\varphi^{-2}y_{\varepsilon,\Gamma}\rangle_{H^{-1/2}(\Gamma),H^{1/2}(\Gamma)}dt
\\&\quad\,+\mathbb{E}\iint_\Sigma (\theta^{-2}_\varepsilon\varphi^{-2})_ty_{\varepsilon,\Gamma}^2d\sigma dt+2\lambda^3\mu^3\mathbb{E}\iint_\Sigma \theta^{-2}_\varepsilon\theta^2\varphi y_{\varepsilon,\Gamma} z_\Gamma d\sigma dt\\&\quad\,+\mathbb{E}\iint_\Sigma \theta^{-2}_\varepsilon\varphi^{-2}v_{\varepsilon,2}^2d\sigma.
\end{aligned}
\end{align}
By using \eqref{3.3} and \eqref{functalphaepsilon}, it is easy to check that for a large enough \( \lambda \geq C T^2 \), we have for any $(t,x)\in(0,T)\times\overline{G}$,
\begin{equation}\label{2.0140}
|(\theta^{-2}_\varepsilon \varphi^{-2})_t| \leq C T \lambda e^{2\mu \|\psi\|_\infty} \theta_\varepsilon^{-2}, \qquad
|\nabla (\theta^{-2}_\varepsilon \varphi^{-2})| \leq C \lambda \mu \theta_\varepsilon^{-2} \varphi^{-1}.
\end{equation}
Recalling \eqref{asummAandAgama} and combining \eqref{3.016}, \eqref{3.017} and \eqref{2.0140}, we conclude that for a large \( \lambda \geq C T^2 \), we obtain
\begin{align}\label{3.019}
\begin{aligned}
&\,2 \beta_0\mathbb{E}\iint_Q \theta^{-2}_\varepsilon\varphi^{-2}\vert\nabla y_\varepsilon\vert^2 dxdt+2 \beta_0\mathbb{E}\iint_\Sigma \theta^{-2}_\varepsilon\varphi^{-2}\vert\nabla_\Gamma y_{\varepsilon,\Gamma}\vert^2 d\sigma dt\\
&\leq C\lambda\mu\mathbb{E}\iint_Q \theta_\varepsilon^{-2}\varphi^{-1} \vert y_\varepsilon\vert \vert\nabla y_\varepsilon\vert dxdt+CT\lambda e^{2\mu\|\psi\|_\infty}\mathbb{E}\iint_Q \theta^{-2}_\varepsilon y_\varepsilon^2 dxdt+2\lambda^3\mu^4\mathbb{E}\iint_Q \theta^{-1}_\varepsilon\theta\varphi \vert y_\varepsilon\vert  \vert z\vert  dxdt\\&\quad\,+2\mathbb{E}\int_0^T\int_{\mathcal{B}}\theta^{-1}_\varepsilon\theta^{-1}\varphi^{-2}\vert y_\varepsilon\vert  \vert u_\varepsilon\vert  dxdt+\mathbb{E}\iint_Q \theta^{-2}_\varepsilon\varphi^{-2}v_{\varepsilon,1}^2dxdt+CT\lambda e^{2\mu\|\psi\|_\infty}\mathbb{E}\iint_\Sigma \theta^{-2}_\varepsilon y_{\varepsilon,\Gamma}^2d\sigma dt\\
&\quad\,+2\lambda^3\mu^3\mathbb{E}\iint_\Sigma \theta^{-1}_\varepsilon\theta\varphi \vert y_{\varepsilon,\Gamma}\vert  \vert z_\Gamma\vert  d\sigma dt+\mathbb{E}\iint_\Sigma \theta^{-2}_\varepsilon\varphi^{-2}v_{\varepsilon,2}^2d\sigma dt.
    \end{aligned}
\end{align}
Applying Young's inequality on the right-hand side of \eqref{3.019}, we deduce that for any $\rho>0$,
\begin{align}\label{3.020}
\begin{aligned}
&\,2 \beta_0\mathbb{E}\iint_Q \theta^{-2}_\varepsilon\varphi^{-2}\vert\nabla y_\varepsilon\vert^2 dxdt+2 \beta_0\mathbb{E}\iint_\Sigma \theta^{-2}_\varepsilon\varphi^{-2}\vert\nabla_\Gamma y_{\varepsilon,\Gamma}\vert^2 d\sigma dt\\
&\leq \rho \mathbb{E}\iint_Q \theta_\varepsilon^{-2}\varphi^{-2} \vert\nabla y_\varepsilon\vert^2 dxdt+\frac{C}{\rho}\lambda^2\mu^2 \mathbb{E}\iint_Q \theta_\varepsilon^{-2}  y_\varepsilon^2 dxdt+CT\lambda e^{2\mu\|\psi\|_\infty}\mathbb{E}\iint_Q \theta^{-2}_\varepsilon y_\varepsilon^2 dxdt\\&\quad\,+2\lambda^3\mu^4\mathbb{E}\iint_Q \theta^{-1}_\varepsilon\theta\varphi \vert y_\varepsilon\vert  \vert z\vert  dxdt+2\mathbb{E}\int_0^T\int_{\mathcal{B}}\theta^{-1}_\varepsilon\theta^{-1}\varphi^{-2}\vert y_\varepsilon\vert  \vert u_\varepsilon\vert  dxdt+\mathbb{E}\iint_Q \theta^{-2}_\varepsilon\varphi^{-2}v_{\varepsilon,1}^2dxdt\\&\quad\,+CT\lambda e^{2\mu\|\psi\|_\infty}\mathbb{E}\iint_\Sigma \theta^{-2}_\varepsilon y_{\varepsilon,\Gamma}^2d\sigma dt+2\lambda^3\mu^3\mathbb{E}\iint_\Sigma \theta^{-1}_\varepsilon\theta\varphi \vert y_{\varepsilon,\Gamma}\vert  \vert z_\Gamma\vert  d\sigma dt+\mathbb{E}\iint_\Sigma \theta^{-2}_\varepsilon\varphi^{-2}v_{\varepsilon,2}^2d\sigma dt.
    \end{aligned}
\end{align}
Absorbing the first term on the right-hand side of \eqref{3.020} with the first term on the left-hand side by choosing a small $\rho>0$. Thus, multiplying the obtained inequality by $\lambda^{-2}\mu^{-2}$, we find that
\begin{align}\label{3.21021s}
    \begin{aligned}
&\,\lambda^{-2}\mu^{-2}\mathbb{E}\iint_Q \theta^{-2}_\varepsilon\varphi^{-2}\vert\nabla y_\varepsilon\vert^2 dxdt+\lambda^{-2}\mu^{-2}\mathbb{E}\iint_\Sigma \theta^{-2}_\varepsilon\varphi^{-2}\vert\nabla_\Gamma y_{\varepsilon,\Gamma}\vert^2 d\sigma dt\\
&\leq C\mathbb{E}\iint_Q \theta_\varepsilon^{-2}  y_\varepsilon^2 dxdt+CT\lambda^{-1}\mu^{-2} e^{2\mu\|\psi\|_\infty}\mathbb{E}\iint_Q \theta^{-2}_\varepsilon y_\varepsilon^2 dxdt+C\lambda\mu^2\mathbb{E}\iint_Q \theta^{-1}_\varepsilon\theta\varphi \vert y_\varepsilon\vert  \vert z\vert  dxdt\\&\quad\,+C\lambda^{-2}\mu^{-2}\mathbb{E}\int_0^T\int_{\mathcal{B}}\theta^{-1}_\varepsilon\theta^{-1}\varphi^{-2}\vert y_\varepsilon\vert  \vert u_\varepsilon\vert  dxdt+C\lambda^{-2}\mu^{-2}\mathbb{E}\iint_Q \theta^{-2}\varphi^{-2}v_{\varepsilon,1}^2dxdt\\&\quad\,+CT\lambda^{-1}\mu^{-2} e^{2\mu\|\psi\|_\infty}\mathbb{E}\iint_\Sigma \theta^{-2}_\varepsilon y_{\varepsilon,\Gamma}^2d\sigma dt+C\lambda\mu\mathbb{E}\iint_\Sigma \theta^{-1}_\varepsilon\theta\varphi \vert y_{\varepsilon,\Gamma}\vert  \vert z_\Gamma\vert  d\sigma dt\\&\quad\,+C\lambda^{-2}\mu^{-2}\mathbb{E}\iint_\Sigma \theta^{-2}\varphi^{-2}v_{\varepsilon,2}^2d\sigma dt.
    \end{aligned}
\end{align}
Using Young's inequality for \eqref{3.21021s}, we have that
\begin{align*}
    \begin{aligned}
&\,\lambda^{-2}\mu^{-2}\mathbb{E}\iint_Q \theta^{-2}_\varepsilon\varphi^{-2}\vert\nabla y_\varepsilon\vert^2 dxdt+\lambda^{-2}\mu^{-2}\mathbb{E}\iint_\Sigma \theta^{-2}_\varepsilon\varphi^{-2}\vert\nabla_\Gamma y_{\varepsilon,\Gamma}\vert^2 d\sigma dt\\
&\leq C\mathbb{E}\iint_Q \theta_\varepsilon^{-2}  y_\varepsilon^2 dxdt+CT\lambda^{-1}\mu^{-2} e^{2\mu\|\psi\|_\infty}\mathbb{E}\iint_Q \theta^{-2}_\varepsilon y_\varepsilon^2 dxdt+C\lambda^2\mu^4\mathbb{E}\iint_Q \theta^2\varphi^2 z^2  dxdt\\
&\quad\,+C\mathbb{E}\iint_Q \theta^{-2}_\varepsilon y_\varepsilon^2  dxdt +C\lambda^{-4}\mu^{-4}\mathbb{E}\iint_{Q} \theta^{-2}\varphi^{-4} u_\varepsilon^2  dxdt+C\mathbb{E}\iint_{Q}\theta^{-2}_\varepsilon y_\varepsilon^2  dxdt\\
&\quad\,+C\lambda^{-2}\mu^{-2}\mathbb{E}\iint_Q \theta^{-2}\varphi^{-2}v_{\varepsilon,1}^2dxdt+CT\lambda^{-1}\mu^{-2} e^{2\mu\|\psi\|_\infty}\mathbb{E}\iint_\Sigma \theta^{-2}_\varepsilon y_{\varepsilon,\Gamma}^2d\sigma dt\\
&\quad\,+C\lambda^2\mu^2\mathbb{E}\iint_\Sigma \theta^2\varphi^2 z_\Gamma^2 d\sigma dt+C\mathbb{E}\iint_\Sigma \theta^{-2}_\varepsilon y_{\varepsilon,\Gamma}^2  d\sigma dt+C\lambda^{-2}\mu^{-2}\mathbb{E}\iint_\Sigma \theta^{-2}\varphi^{-2}v_{\varepsilon,2}^2d\sigma dt.
    \end{aligned}
\end{align*}
Taking a large enough \( \lambda \geq C (e^{2 \mu \|\psi\|_\infty} T + T^2) \) and $\mu\geq C$, it follows that
\begin{align}\label{ineqforgradtert}
    \begin{aligned}
&\,\lambda^{-2}\mu^{-2}\mathbb{E}\iint_Q \theta^{-2}_\varepsilon\varphi^{-2}\vert\nabla y_\varepsilon\vert^2 dxdt+\lambda^{-2}\mu^{-2}\mathbb{E}\iint_\Sigma \theta^{-2}_\varepsilon\varphi^{-2}\vert\nabla_\Gamma y_{\varepsilon,\Gamma}\vert^2 d\sigma dt\\
&\leq C\mathbb{E}\iint_Q \theta_\varepsilon^{-2}  y_\varepsilon^2 dxdt+C\mathbb{E}\iint_\Sigma \theta^{-2}_\varepsilon y_{\varepsilon,\Gamma}^2  d\sigma dt+C\lambda^3\mu^4\mathbb{E}\iint_Q \theta^2\varphi^3 z^2  dxdt\\
&\quad\, +C\lambda^3\mu^3\mathbb{E}\iint_\Sigma \theta^2\varphi^3 z_\Gamma^2 d\sigma dt+C\lambda^{-3}\mu^{-4}\mathbb{E}\iint_{Q} \theta^{-2}\varphi^{-3} u_\varepsilon^2  dxdt\\
&\quad\,+C\lambda^{-2}\mu^{-2}\mathbb{E}\iint_Q \theta^{-2}\varphi^{-2}v_{\varepsilon,1}^2dxdt+C\lambda^{-2}\mu^{-2}\mathbb{E}\iint_\Sigma \theta^{-2}\varphi^{-2}v_{\varepsilon,2}^2d\sigma dt.
    \end{aligned}
\end{align}
Combining \eqref{ineqforgradtert} and \eqref{3.015}, we end up with
\begin{align}\label{3.021}
\begin{aligned}
&\,\frac{1}{\varepsilon}\mathbb{E}\int_G \vert y_\varepsilon(T)\vert^2dx+\frac{1}{\varepsilon}\mathbb{E}\int_\Gamma \vert y_{\varepsilon,\Gamma}(T)\vert^2d\sigma+\mathbb{E}\iint_{Q}\theta_\varepsilon^{-2}y_\varepsilon^2dxdt+\mathbb{E}\iint_{\Sigma}\theta_\varepsilon^{-2}y_{\varepsilon,\Gamma}^2d\sigma dt\\
&+\lambda^{-2}\mu^{-2}\mathbb{E}\iint_Q \theta^{-2}_\varepsilon\varphi^{-2}\vert\nabla y_\varepsilon\vert^2 dxdt+\lambda^{-2}\mu^{-2}\mathbb{E}\iint_\Sigma \theta^{-2}_\varepsilon\varphi^{-2}\vert\nabla_\Gamma y_{\varepsilon,\Gamma}\vert^2 d\sigma dt\\
&+\lambda^{-3}\mu^{-4}\mathbb{E}\iint_{Q}\theta^{-2}\varphi^{-3}u_\varepsilon^{2}dxdt+\lambda^{-2}\mu^{-2}\mathbb{E}\iint_{Q}\theta^{-2}\varphi^{-2}v_{\varepsilon,1}^2dxdt\\
&+\lambda^{-2}\mu^{-2}\mathbb{E}\iint_{\Sigma}\theta^{-2}\varphi^{-1} v_{\varepsilon,2}^2d\sigma dt\\
&\leq C\bigg[\lambda^3\mu^4\mathbb{E}\iint_{Q}\theta^2\varphi^3 z^2 dxdt+\lambda^3\mu^3\mathbb{E}\iint_{\Sigma}\theta^2\varphi^3 z_{\Gamma}^2 d\sigma dt\bigg].
\end{aligned}
\end{align}
\textbf{Step 3.} Using the uniform estimate \eqref{3.021} and recalling \eqref{3.2} and \eqref{functalphaepsilon}, we have that
\begin{align*}
\begin{aligned}
&\,\mathbb{E}\iint_{Q} (y_\varepsilon^2+\vert\nabla y_\varepsilon\vert^2)dxdt+\mathbb{E}\iint_{\Sigma} (y_{\varepsilon,\Gamma}^2+\vert\nabla_\Gamma y_{\varepsilon,\Gamma}\vert^2)d\sigma dt\\
&+\mathbb{E}\iint_{Q} (u_\varepsilon^{2}+v_{\varepsilon,1}^2)dxdt+\mathbb{E}\iint_{\Sigma} v_{\varepsilon,2}^2d\sigma dt\\
&\leq C\bigg[\lambda^3\mu^4\mathbb{E}\iint_{Q}\theta^2\varphi^3 z^2 dxdt+\lambda^3\mu^3\mathbb{E}\iint_{\Sigma}\theta^2\varphi^3 z_{\Gamma}^2 d\sigma dt\bigg],
\end{aligned}
\end{align*}
which implies that
\begin{align*}
\begin{aligned}
&\,\mathbb{E}\int_0^T \|u_\varepsilon\|_{L^2(G)}^{2} dt+ \mathbb{E}\int_0^T \|v_{\varepsilon,1}\|_{L^2(G)}^{2} dt+\mathbb{E}\int_0^T \|v_{\varepsilon,2}\|_{L^2(\Gamma)}^2 dt\\
&+\mathbb{E}\int_0^T \|y_\varepsilon\|_{H^1(G)}^2 dt+\mathbb{E}\int_0^T \|y_{\varepsilon,\Gamma}\|^2_{H^1(\Gamma)}  dt\\
&\leq C\bigg[\lambda^3\mu^4\mathbb{E}\iint_{Q}\theta^2\varphi^3 z^2 dxdt+\lambda^3\mu^3\mathbb{E}\iint_{\Sigma}\theta^2\varphi^3 z_{\Gamma}^2 d\sigma dt\bigg].
\end{aligned}
\end{align*}
Then, we deduce that there exists  
\begin{align*}
(\widehat{u},\widehat{v}_1,\widehat{v}_2,\widehat{y},\widehat{y}_\Gamma)\in&\,\, L^2_\mathcal{F}(0,T;L^2(\mathcal{B}))\times L^2_\mathcal{F}(0,T;L^2(G))\times L^2_\mathcal{F}(0,T;L^2(\Gamma))\\
&\times L^2_\mathcal{F}(0,T;H^1(G))\times L^2_\mathcal{F}(0,T;H^1(\Gamma)),
\end{align*}
such that as $\varepsilon\rightarrow0$,
$$
u_\varepsilon \longrightarrow \widehat{u},\,\,\,\textnormal{weakly in}\,\,\,L^2((0,T)\times\Omega;L^2(\mathcal{B}));
$$
$$
v_{\varepsilon,1} \longrightarrow \widehat{v}_1,\,\,\,\textnormal{weakly in}\,\,\,L^2((0,T)\times\Omega;L^2(G));
$$
\begin{equation}\label{3.022}
v_{\varepsilon,2} \longrightarrow \widehat{v}_2,\,\,\,\textnormal{weakly in}\,\,\,L^2((0,T)\times\Omega;L^2(\Gamma));
\end{equation}
$$
y_{\varepsilon} \longrightarrow \widehat{y},\,\,\,\textnormal{weakly in}\,\,\,L^2((0,T)\times\Omega;H^1(G));
$$
$$
y_{\varepsilon,\Gamma} \longrightarrow\widehat{y}_\Gamma,\,\,\,\textnormal{weakly in}\,\,\,L^2((0,T)\times\Omega;H^1(\Gamma)).
$$
Now, we claim that $(\widehat{y},\widehat{y}_\Gamma)$ is the solution of \eqref{3.6} associated to the controls $\widehat{u}, \widehat{v}_1$ and $\widehat{v}_2$. To prove this fact, we suppose  $(\widetilde{y},\widetilde{y}_\Gamma))$ is the unique solution in $L^2_\mathcal{F}(\Omega;C([0,T];\mathbb{L}^2))\cap L^2_\mathcal{F}(0,T;\mathbb{H}^1)$ to \eqref{3.6} with controls $\widehat{u}$, $\widehat{v}_1$ and $\widehat{v}_2$. For any processes $(f,g)\in L^2_\mathcal{F}(0,T;L^2(G))\times L^2_\mathcal{F}(0,T;L^2(\Gamma))$, we consider the following backward stochastic parabolic equation
		\begin{equation*}
			\begin{cases}
				\begin{array}{ll}
d\phi+\nabla\cdot(\mathcal{A}\nabla\phi) \,dt = f \,dt+\Phi \,dW(t) &\textnormal{in} \,\,Q,\\
d\phi_\Gamma+\nabla_\Gamma\cdot(\mathcal{A}_\Gamma\nabla_\Gamma\phi_\Gamma) dt-\partial_\nu^\mathcal{A} \phi \,dt = g \,dt+\widehat{\Phi} \,dW(t) &\textnormal{on}\,\,  \Sigma,\\
\phi_\Gamma=\phi\vert_\Gamma&\textnormal{on} \,\,\Sigma,\\
(\phi,\phi_\Gamma)\vert_{t=T}=(0,0)&\textnormal{in} \,\,G\times\Gamma.
				\end{array}
			\end{cases}
		\end{equation*}
By Itô's formula, computing ``$d\langle(\phi,\phi_\Gamma),(\widetilde{y},\widetilde{y}_\Gamma)\rangle_{\mathbb{L}^2}-d\langle(\phi,\phi_\Gamma),(y_\varepsilon,y_{\varepsilon,\Gamma})\rangle_{\mathbb{L}^2}$", integrating the obtained equality on $(0,T)$, then we deduce that
\begin{align}\label{3.023}
    \begin{aligned}
       &\, 0=\mathbb{E}\iint_Q f(y_\varepsilon-\widetilde{y})dxdt+\mathbb{E}\iint_{Q_0} \phi(u_\varepsilon-\widehat{u})dxdt+\mathbb{E}\iint_Q \Phi(v_{\varepsilon,1}-\widehat{v}_1)dxdt\\
        &\quad\;+\mathbb{E}\iint_\Sigma g(y_{\varepsilon,\Gamma}-\widetilde{y}_\Gamma)d\sigma dt+\mathbb{E}\iint_\Sigma \widehat{\Phi}(v_{\varepsilon,2}-\widehat{v}_2)d\sigma dt.
    \end{aligned}
\end{align}
Letting $\varepsilon\rightarrow0$ in \eqref{3.023}, we get
\begin{align}\label{3.024}
    \begin{aligned}
\mathbb{E}\iint_Q f(\widehat{y}-\widetilde{y})dxdt+\mathbb{E}\iint_\Sigma g(\widehat{y}_{\Gamma}-\widetilde{y}_\Gamma)d\sigma dt=0.
    \end{aligned}
\end{align}
From \eqref{3.024}, we conclude that $\widetilde{y}=\widehat{y}$ in $Q$, a.s., and $\widetilde{y}_\Gamma=\widehat{y}_\Gamma$ on $\Sigma$, a.s. Then we deduce that $(\widehat{y},\widehat{y}_\Gamma)$ is the solution of \eqref{3.6} with controls $\widehat{u}, \widehat{v}_1$ and $\widehat{v}_2$. Finally, combining the uniform estimate \eqref{3.021} and the convergence results \eqref{3.022}, we obtain the null controllability of \eqref{3.6}, along with the desired inequality \eqref{3.7}. This concludes the proof of Proposition \ref{prop3.1}.
\end{proof}

Now, we are in a position to prove Theorem \ref{thm1.1}.
\begin{proof}[Proof of Theorem \ref{thm1.1}]
Let $(z,z_\Gamma;Z,\widehat{Z})$ be the solution of \eqref{1.1} and $(\widehat{y},\widehat{y}_\Gamma)$ be the solution of \eqref{3.6} associated to the controls $\widehat{u}, \widehat{v}_1$ and $\widehat{v}_2$ obtained in Proposition \ref{prop3.1}. By applying Itô's formula, we compute $d\langle(\widehat{y},\widehat{y}_\Gamma),(z,z_\Gamma)\rangle_{\mathbb{L}^2}$, integrating the result on $(0,T)$ and taking the expectation on both sides, we obtain
\begin{align}\label{3.9}
\begin{aligned}
\lambda^3\mu^4\mathbb{E}\iint_Q \theta^2\varphi^3z^2dxdt+\lambda^3\mu^3\mathbb{E}\iint_\Sigma \theta^2\varphi^3z_\Gamma^2d\sigma dt =&-\mathbb{E}\iint_Q [\mathbbm{1}_{\mathcal{B}}\widehat{u}z+F_1\widehat{y}-F\cdot\nabla\widehat{y}+Z\widehat{v}_1]dxdt\\
&-\mathbb{E}\iint_\Sigma 
 [F_2\widehat{y}_\Gamma-F_\Gamma\cdot\nabla_\Gamma\widehat{y}_\Gamma +\widehat{Z}\widehat{v}_2] d\sigma dt.
\end{aligned}
\end{align}
Applying Young's inequality on the right-hand side of \eqref{3.9}, we get that for all $\rho>0$,
\begin{align}\label{3.10}
\begin{aligned}
&\,\lambda^3\mu^4\mathbb{E}\iint_Q \theta^2\varphi^3z^2dxdt+\lambda^3\mu^3\mathbb{E}\iint_\Sigma \theta^2\varphi^3z_\Gamma^2d\sigma dt\\
&\leq \rho\bigg[\lambda^{-3}\mu^{-4}\mathbb{E}\iint_{Q} \theta^{-2}\varphi^{-3}\widehat{u}^2dxdt+\lambda^{-2}\mu^{-2}\mathbb{E}\iint_Q \theta^{-2}\varphi^{-2}\widehat{v}_1^2dxdt\\
&\hspace{0.8cm}+\lambda^{-2}\mu^{-2}\mathbb{E}\iint_\Sigma \theta^{-2}\varphi^{-2}\widehat{v}_2^2d\sigma dt+\mathbb{E}\iint_Q \theta^{-2}\widehat{y}^2dxdt+\mathbb{E}\iint_\Sigma \theta^{-2}\widehat{y}_\Gamma^2d\sigma dt\\
&\hspace{0.8cm}+\lambda^{-2}\mu^{-2}\mathbb{E}\iint_Q \theta^{-2}\varphi^{-2}\vert\nabla\widehat{y}\vert^2dxdt+\lambda^{-2}\mu^{-2}\mathbb{E}\iint_\Sigma \theta^{-2}\varphi^{-2}\vert\nabla_\Gamma\widehat{y}_\Gamma\vert^2d\sigma dt\bigg]\\
&\quad +\frac{1}{4\rho}\bigg[\lambda^{3}\mu^{4}\mathbb{E}\int_0^T\int_{\mathcal{B}} \theta^{2}\varphi^{3}z^2dxdt+\lambda^{2}\mu^{2}\mathbb{E}\iint_Q \theta^{2}\varphi^{2}Z^2dxdt\\
&\hspace{1.6cm}+\lambda^{2}\mu^2\mathbb{E}\iint_\Sigma \theta^{2}\varphi^2\widehat{Z}^2d\sigma dt+\mathbb{E}\iint_Q \theta^{2}F_1^2dxdt+\mathbb{E}\iint_\Sigma \theta^{2}F_2^2d\sigma dt\\
&\hspace{1.6cm}+\lambda^{2}\mu^{2}\mathbb{E}\iint_Q \theta^{2}\varphi^{2}\vert F\vert^2dxdt+\lambda^{2}\mu^{2}\mathbb{E}\iint_\Sigma \theta^{2}\varphi^{2}\vert F_\Gamma\vert^2d\sigma dt\bigg].
\end{aligned}
\end{align}
Using Carleman estimate \eqref{3.7} and taking a small enough $\rho>0$, \eqref{3.10} implies that 
\begin{align}\label{3.11firineq}
\begin{aligned}
&\,\lambda^3\mu^4\mathbb{E}\iint_Q \theta^2\varphi^3z^2dxdt+\lambda^3\mu^3\mathbb{E}\iint_\Sigma \theta^2\varphi^3z_\Gamma^2d\sigma dt\\
&\leq C\bigg[\lambda^{3}\mu^{4}\mathbb{E}\int_0^T\int_{\mathcal{B}} \theta^{2}\varphi^{3}z^2dxdt+\lambda^{2}\mu^{2}\mathbb{E}\iint_Q \theta^{2}\varphi^{2}Z^2dxdt\\
&\hspace{1cm}+\lambda^{2}\mu^2\mathbb{E}\iint_\Sigma \theta^{2}\varphi^2\widehat{Z}^2d\sigma dt+\mathbb{E}\iint_Q \theta^{2}F_1^2dxdt+\mathbb{E}\iint_\Sigma \theta^{2}F_2^2d\sigma dt\\
&\hspace{1cm}+\lambda^{2}\mu^{2}\mathbb{E}\iint_Q \theta^{2}\varphi^{2}\vert F\vert^2dxdt+\lambda^{2}\mu^{2}\mathbb{E}\iint_\Sigma \theta^{2}\varphi^{2}\vert F_\Gamma\vert^2d\sigma dt\bigg],
\end{aligned}
\end{align}
for all $\mu\geq C$ and $\lambda\geq C(e^{2\mu\|\psi\|_\infty}T+T^2)$. 

On the other hand, computing $d\|\theta\varphi^{\frac{1}{2}} z\|^2_{L^2(G)}$ by using Itô's formula, integrating the equality with respect to $t\in(0,T)$ and taking the expectation on both sides, we obtain that
\begin{align}\label{3.12}
\begin{aligned}
0=&\,\mathbb{E}\iint_Q (\theta^2\varphi)_tz^2dxdt+2\mathbb{E}\iint_Q \mathcal{A}\nabla z\cdot\nabla(\theta^2\varphi z)dxdt\\
&-2\mathbb{E}\int_0^T\langle\partial_\nu^\mathcal{A} z,\theta^2\varphi z_\Gamma\rangle_{H^{-1/2}(\Gamma),H^{1/2}(\Gamma)} dt+2\mathbb{E}\iint_Q \theta^2\varphi zF_1\,dxdt\\
&-2\mathbb{E}\iint_Q F\cdot\nabla(\theta^2\varphi z)dxdt+2\mathbb{E}\int_0^T \langle F\cdot\nu,\theta^2\varphi z_\Gamma\rangle_{H^{-1/2}(\Gamma),H^{1/2}(\Gamma)} dt\\
&+\mathbb{E}\iint_Q \theta^2\varphi Z^2dxdt.
\end{aligned}
\end{align}
Similarly to \eqref{3.12}, by computing $d\|\theta\varphi^{\frac{1}{2}}z_\Gamma\|^2_{L^2(\Gamma)}$, integrating the result on $(0,T)$ and taking the expectation, we get that
\begin{align}\label{3.13}
    \begin{aligned}
0=&\,\mathbb{E}\iint_\Sigma (\theta^2\varphi)_tz_\Gamma^2d\sigma dt+2\mathbb{E}\iint_\Sigma \mathcal{A}_\Gamma\nabla_\Gamma z_\Gamma\cdot\nabla_\Gamma(\theta^2\varphi z_\Gamma)d\sigma dt\\
&+2\mathbb{E}\int_0^T \langle \partial_\nu^\mathcal{A} z,\theta^2\varphi z_\Gamma\rangle_{H^{-1/2}(\Gamma),H^{1/2}(\Gamma)} dt+2\mathbb{E}\iint_\Sigma\theta^2\varphi z_\Gamma F_2 \, d\sigma dt\\
&-2\mathbb{E}\int_0^T \langle F\cdot\nu,\theta^2\varphi z_\Gamma\rangle_{H^{-1/2}(\Gamma),H^{1/2}(\Gamma)} dt-2\mathbb{E}\iint_\Sigma F_\Gamma\cdot\nabla_\Gamma(\theta^2\varphi z_\Gamma)d\sigma dt\\
&+\mathbb{E}\iint_\Sigma \theta^2\varphi \widehat{Z}^2d\sigma dt.
\end{aligned}
\end{align}
Combining \eqref{3.12} and \eqref{3.13}, it follows that
\begin{align}\label{3.12inte}
\begin{aligned}
0=&\,2\mathbb{E}\iint_Q \mathcal{A}\nabla z\cdot\nabla(\theta^2\varphi z)dxdt+2\mathbb{E}\iint_\Sigma \mathcal{A}_\Gamma\nabla_\Gamma z_\Gamma\cdot\nabla_\Gamma(\theta^2\varphi z_\Gamma)d\sigma dt\\
&+\mathbb{E}\iint_Q (\theta^2\varphi)_tz^2dxdt+\mathbb{E}\iint_\Sigma (\theta^2\varphi)_tz_\Gamma^2d\sigma dt+2\mathbb{E}\iint_Q \theta^2\varphi zF_1\,dxdt\\
&+2\mathbb{E}\iint_\Sigma\theta^2\varphi z_\Gamma F_2 \, d\sigma dt-2\mathbb{E}\iint_Q F\cdot\nabla(\theta^2\varphi z)dxdt\\
&-2\mathbb{E}\iint_\Sigma F_\Gamma\cdot\nabla_\Gamma(\theta^2\varphi z_\Gamma)d\sigma dt+\mathbb{E}\iint_Q \theta^2\varphi Z^2dxdt+\mathbb{E}\iint_\Sigma \theta^2\varphi \widehat{Z}^2d\sigma dt,
\end{aligned}
\end{align}
which implies that
\begin{align}\label{3.12intesec}
\begin{aligned}
&\,2\mathbb{E}\iint_Q \theta^2\varphi\mathcal{A}\nabla z\cdot\nabla zdxdt+2\mathbb{E}\iint_\Sigma \theta^2\varphi\mathcal{A}_\Gamma\nabla_\Gamma z_\Gamma\cdot\nabla_\Gamma z_\Gamma d\sigma dt\\
&=-2\mathbb{E}\iint_Q z\mathcal{A}\nabla z\cdot\nabla(\theta^2\varphi)dxdt-\mathbb{E}\iint_Q (\theta^2\varphi)_tz^2dxdt-\mathbb{E}\iint_\Sigma (\theta^2\varphi)_tz_\Gamma^2d\sigma dt\\
&\quad\,-2\mathbb{E}\iint_Q \theta^2\varphi zF_1\,dxdt-2\mathbb{E}\iint_\Sigma\theta^2\varphi z_\Gamma F_2 \, d\sigma dt+2\mathbb{E}\iint_Q \theta^2\varphi F\cdot\nabla zdxdt\\
&\quad\,+2\mathbb{E}\iint_Q zF\cdot\nabla(\theta^2\varphi)dxdt+2\mathbb{E}\iint_\Sigma \theta^2\varphi F_\Gamma\cdot\nabla_\Gamma z_\Gamma d\sigma dt\\
&\quad\,-\mathbb{E}\iint_Q \theta^2\varphi Z^2dxdt-\mathbb{E}\iint_\Sigma \theta^2\varphi \widehat{Z}^2d\sigma dt.
\end{aligned}
\end{align}
It is easy to see that for large $\lambda\geq CT^2$, we have
\begin{equation}\label{2.32012}
    \vert (\theta^2\varphi)_t\vert\leq CT\lambda  e^{2\mu\|\psi\|_\infty}\theta^2\varphi^3\quad\;\;\textnormal{and} \quad\;\;\vert\nabla(\theta^2\varphi)\vert\leq C\lambda\mu\theta^2\varphi^2.
\end{equation}
Recalling \eqref{asummAandAgama} and using \eqref{2.32012}, we obtain for large $\lambda\geq CT^2$,
\begin{align}\label{3.12intesec2s}
\begin{aligned}
&\,\mathbb{E}\iint_Q \theta^2\varphi|\nabla z|^2dxdt+\mathbb{E}\iint_\Sigma \theta^2\varphi|\nabla_\Gamma z_\Gamma|^2 d\sigma dt\\
&\leq C\lambda\mu\mathbb{E}\iint_Q \theta^2\varphi^2|z||\nabla z| dxdt+CT\lambda  e^{2\mu\|\psi\|_\infty}\mathbb{E}\iint_Q \theta^2\varphi^3z^2dxdt\\
&\quad\,+CT\lambda  e^{2\mu\|\psi\|_\infty}\mathbb{E}\iint_\Sigma \theta^2\varphi^3z_\Gamma^2d\sigma dt+C\mathbb{E}\iint_Q \theta^2\varphi |z| |F_1|\,dxdt\\
&\quad\,+C\mathbb{E}\iint_\Sigma\theta^2\varphi |z_\Gamma| |F_2| \, d\sigma dt+C\mathbb{E}\iint_Q \theta^2\varphi |F||\nabla z| dxdt\\
&\quad\,+C\lambda\mu\mathbb{E}\iint_Q \theta^2\varphi^2 |z| |F|dxdt+C\mathbb{E}\iint_\Sigma \theta^2\varphi |F_\Gamma||\nabla_\Gamma z_\Gamma| d\sigma dt\\
&\quad\,+C\mathbb{E}\iint_Q \theta^2\varphi Z^2dxdt+C\mathbb{E}\iint_\Sigma \theta^2\varphi \widehat{Z}^2d\sigma dt.
\end{aligned}
\end{align}
Applying Young's inequality in the right-hand side of \eqref{3.12intesec2s}, we get for any $\varepsilon>0$,
\begin{align}\label{3.31intesecYgs}
\begin{aligned}
&\,\mathbb{E}\iint_Q \theta^2\varphi|\nabla z|^2dxdt+\mathbb{E}\iint_\Sigma \theta^2\varphi|\nabla_\Gamma z_\Gamma|^2 d\sigma dt\\
&\leq \varepsilon\mathbb{E}\iint_Q \theta^2\varphi|\nabla z|^2 dxdt+\varepsilon\mathbb{E}\iint_\Sigma \theta^2\varphi |\nabla_\Gamma z_\Gamma|^2 d\sigma dt+\frac{C}{\varepsilon}\lambda^2\mu^2\mathbb{E}\iint_Q \theta^2\varphi^3 z^2 dxdt\\
&\quad\,+\frac{C}{\varepsilon}\mathbb{E}\iint_Q \theta^2\varphi |F|^2 dxdt+\frac{C}{\varepsilon}\mathbb{E}\iint_\Sigma \theta^2\varphi |F_\Gamma|^2 d\sigma dt+CT\lambda  e^{2\mu\|\psi\|_\infty}\mathbb{E}\iint_Q \theta^2\varphi^3z^2dxdt\\
&\quad\,+CT\lambda  e^{2\mu\|\psi\|_\infty}\mathbb{E}\iint_\Sigma \theta^2\varphi^3z_\Gamma^2d\sigma dt+C\lambda^{-1}\mu^{-2}\mathbb{E}\iint_Q \theta^2 F_1^2\,dxdt+C\lambda\mu^2\mathbb{E}\iint_Q \theta^2\varphi^2 z^2\,dxdt\\
&\quad\,+C\lambda^{-1}\mu^{-1}\mathbb{E}\iint_\Sigma\theta^2 F_2^2 \, d\sigma dt+C\lambda\mu\mathbb{E}\iint_\Sigma\theta^2\varphi^2 z_\Gamma^2 \, d\sigma dt+C\lambda\mathbb{E}\iint_Q \theta^2\varphi^2 |F|^2 dxdt\\
&\quad\,+C\lambda\mu^2\mathbb{E}\iint_Q \theta^2\varphi^2 z^2 dxdt+C\mathbb{E}\iint_Q \theta^2\varphi Z^2dxdt+C\mathbb{E}\iint_\Sigma \theta^2\varphi \widehat{Z}^2d\sigma dt.
\end{aligned}
\end{align}
Choosing a small enough $\varepsilon$ in \eqref{3.31intesecYgs}, and multiplying the obtained inequality by $\lambda\mu$, we deduce that
\begin{align}\label{3.31intesecYgslst}
\begin{aligned}
&\,\lambda\mu\mathbb{E}\iint_Q \theta^2\varphi|\nabla z|^2dxdt+\lambda\mu\mathbb{E}\iint_\Sigma \theta^2\varphi|\nabla_\Gamma z_\Gamma|^2 d\sigma dt\\
&\leq C\lambda^3\mu^3\mathbb{E}\iint_Q \theta^2\varphi^3 z^2 dxdt+C\lambda\mu\mathbb{E}\iint_Q \theta^2\varphi |F|^2 dxdt+C\lambda\mu\mathbb{E}\iint_\Sigma \theta^2\varphi |F_\Gamma|^2 d\sigma dt\\
&\quad\,+CT\lambda^2\mu  e^{2\mu\|\psi\|_\infty}\mathbb{E}\iint_Q \theta^2\varphi^3z^2dxdt+CT\lambda^2\mu  e^{2\mu\|\psi\|_\infty}\mathbb{E}\iint_\Sigma \theta^2\varphi^3z_\Gamma^2d\sigma dt\\
&\quad\,+C\lambda^2\mu^3\mathbb{E}\iint_Q \theta^2\varphi^2 z^2\,dxdt+C\mu^{-1}\mathbb{E}\iint_Q \theta^2 F_1^2\,dxdt+C\mathbb{E}\iint_\Sigma\theta^2 F_2^2 \, d\sigma dt\\
&\quad\,+C\lambda^2\mu^2\mathbb{E}\iint_\Sigma\theta^2\varphi^2 z_\Gamma^2 \, d\sigma dt+C\lambda^2\mu\mathbb{E}\iint_Q \theta^2\varphi^2 |F|^2 dxdt+C\lambda^2\mu^3\mathbb{E}\iint_Q \theta^2\varphi^2 z^2 dxdt\\
&\quad\,+C\lambda\mu\mathbb{E}\iint_Q \theta^2\varphi Z^2dxdt+C\lambda\mu\mathbb{E}\iint_\Sigma \theta^2\varphi \widehat{Z}^2d\sigma dt.
\end{aligned}
\end{align}
Choosing a sufficiently large \( \lambda \geq C(e^{2\mu\|\psi\|_\infty}T + T^2) \) and $\mu\geq C$ in \eqref{3.31intesecYgslst}, we conclude that
\begin{align}\label{lastoneYgslst}
\begin{aligned}
&\,\lambda\mu\mathbb{E}\iint_Q \theta^2\varphi|\nabla z|^2dxdt+\lambda\mu\mathbb{E}\iint_\Sigma \theta^2\varphi|\nabla_\Gamma z_\Gamma|^2 d\sigma dt\\
&\leq C\lambda^3\mu^3\mathbb{E}\iint_Q \theta^2\varphi^3 z^2 dxdt+C\lambda^3\mu^2\mathbb{E}\iint_\Sigma\theta^2\varphi^3 z_\Gamma^2 \, d\sigma dt+C\lambda^2\mu\mathbb{E}\iint_Q \theta^2\varphi^2 |F|^2 dxdt\\
&\quad\,+C\lambda^2\mu\mathbb{E}\iint_\Sigma \theta^2\varphi^2 |F_\Gamma|^2 d\sigma dt+C\mu^{-1}\mathbb{E}\iint_Q \theta^2 F_1^2\,dxdt+C\mathbb{E}\iint_\Sigma\theta^2 F_2^2 \, d\sigma dt\\
&\quad\,+C\lambda^2\mu\mathbb{E}\iint_Q \theta^2\varphi^2 Z^2dxdt+C\lambda^2\mu\mathbb{E}\iint_\Sigma \theta^2\varphi^2 \widehat{Z}^2d\sigma dt.
\end{aligned}
\end{align}
Finally, by combining \eqref{lastoneYgslst} and \eqref{3.11firineq} and taking a large $\mu\geq C$, we obtain the desired Carleman estimate \eqref{1.3}. This completes the proof of Theorem \ref{thm1.1}.
\end{proof}

\section{ Application to Null Controllability of System \eqref{CE}}\label{sec3}
This section is devoted primarily to establishing the null controllability result, stated in Theorem \ref{thmm1.2null}, of the forward anisotropic  stochastic parabolic system \eqref{CE}, based on the main Carleman estimate \eqref{1.3}. This estimate will be employed to derive the corresponding observability inequality for the adjoint backward equation, in the sense introduced in \cite[Chap.~7, Sect.~7.3]{lu2021mathematical}.

Let us consider the following adjoint backward equation associated with the controlled system \eqref{CE}:
\begin{equation}\label{4.1}
\begin{cases}
			\begin{array}{ll}
		dz + \nabla\cdot(\mathcal{A}\nabla z) \,dt = (-a_1 z+\nabla\cdot(zB_1)) \,dt + Z \,dW(t) &\textnormal{in}\,\,Q,\\
		dz_\Gamma+\nabla_\Gamma\cdot(\mathcal{A}_\Gamma\nabla_\Gamma z_\Gamma) \,dt-\partial_\nu^\mathcal{A} z \,dt = (-a_2z_\Gamma-z B_1\cdot\nu+\nabla_\Gamma\cdot(z_\Gamma B_2))\,dt+\widehat{Z} \,dW(t) &\textnormal{on}\,\,\Sigma,\\
z_\Gamma=z\vert_\Gamma &\textnormal{on}\,\,\Sigma,\\
(z,z_\Gamma)\vert_{t=T}=(z_T,z_{\Gamma,T}) &\textnormal{in}\,\,G\times\Gamma,
			\end{array}
		\end{cases}
\end{equation}
where $(z_T,z_{\Gamma,T})\in L^2_{\mathcal{F}_T}(\Omega;\mathbb{L}^2)$ is the terminal state. Similar to the well-posedness of \eqref{1.1}, we also have that \eqref{4.1} is well-posed i.e., for all $(z_T,z_{\Gamma,T})\in L^2_{\mathcal{F}_T}(\Omega;\mathbb{L}^2)$, the system \eqref{4.1} admits a unique weak solution
$$(z,z_\Gamma;Z,\widehat{Z})\in \Big(L^2_\mathcal{F}(\Omega;C([0,T];\mathbb{L}^2))\bigcap L^2_\mathcal{F}(0,T;\mathbb{H}^1)\Big)\times L^2_\mathcal{F}(0,T;\mathbb{L}^2),$$
such that 
\begin{align}\label{2.011}
\max_{0 \le t \le T} \mathbb{E}\|(z(t),z_\Gamma(t))\|^2_{\mathbb{L}^2}+\|(z,z_\Gamma)\|^2_{L^2_\mathcal{F}(0,T;\mathbb{H}^1)}+\| (Z,\widehat{Z}) \|^2_{L^2_\mathcal{F}(0,T;\mathbb{L}^2)}\leq C \,\mathbb{E}\|(z_T,z_{\Gamma,T})\|^2_{\mathbb{L}^2}.
\end{align}

From Theorem \ref{thm1.1}, we deduce the following Carleman estimate for solutions of the system \eqref{4.1}.
\begin{cor}\label{cor4.1}
For $\mu=\mu_0$ given in Theorem \ref{thm1.1}. There exist a large \( \lambda_0 \), and a constant \( C \) depending only on \( G \), \( G_0 \), $\mu_0$, $\beta_0$, $M$, and $M_\Gamma$ such that, for all final state $(z_T,z_{\Gamma,T})\in L^2_{\mathcal{F}_T}(\Omega;\mathbb{L}^2)$, the weak solution $(z,z_\Gamma;Z,\widehat{Z})$ of \eqref{4.1} satisfies that
\begin{align}\label{4.2}
 \begin{aligned}
&\,\lambda^3\mathbb{E}\iint_Q \theta^2\varphi^3|z|^2dxdt + \lambda^3\mathbb{E}\iint_\Sigma \theta^2\varphi^3|z_\Gamma|^2d\sigma dt\\
&+\lambda\mathbb{E}\iint_Q \theta^2\varphi \vert\nabla z\vert^2dxdt + \lambda\mathbb{E}\iint_\Sigma \theta^2\varphi \vert\nabla_\Gamma z_\Gamma\vert^2 d\sigma dt\\
&\leq C \bigg[ \lambda^3\mathbb{E}\iint_{Q_0} \theta^2\varphi^3 |z|^2 dxdt+\lambda^3\mathbb{E}\iint_Q \theta^2\varphi^3 |Z|^2 dxdt + \lambda^3\mathbb{E}\iint_\Sigma \theta^2\varphi^3 |\widehat{Z}|^2 d\sigma dt\bigg],
  \end{aligned}
	\end{align}
for sufficiently large 
$$ \lambda \geq \lambda_0\left[T + T^2\Big(1+\|a_1\|_\infty^{2/3}+\|a_2\|_\infty^{2/3}+\|B_1\|_\infty^2+\|B_2\|_\infty^2\Big)\right].$$
	\end{cor}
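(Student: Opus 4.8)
The plan is to recognize equation \eqref{4.1} as a special instance of the general backward system \eqref{1.1} and then invoke Theorem \ref{thm1.1} with the parameter $\mu$ frozen at the value $\mu_0$. Comparing the two systems term by term, I would set
\[
F_1 = -a_1 z, \qquad F = z B_1, \qquad F_2 = -a_2 z_\Gamma, \qquad F_\Gamma = z_\Gamma B_2,
\]
noting in particular that the surface drift $-zB_1\cdot\nu$ in \eqref{4.1} is exactly the term $-F\cdot\nu$ appearing in the boundary equation of \eqref{1.1}. Since $a_1,a_2,B_1,B_2$ are bounded and adapted, each of these source data lies in the space required by Theorem \ref{thm1.1}, so \eqref{1.3} applies verbatim. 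With $\mu=\mu_0$ held fixed, every power of $\mu$ on both sides of \eqref{1.3} collapses into the constant $C$, which already produces the homogeneous form of the left-hand side of \eqref{4.2}.

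The core of the argument is the absorption of the four source contributions back into the left-hand side. Using the $L^\infty$ bounds I would estimate pointwise
\[
\theta^2 F_1^2 \le \|a_1\|_\infty^2\,\theta^2 z^2, \qquad \mu\theta^2 F_2^2 \le \mu\|a_2\|_\infty^2\,\theta^2 z_\Gamma^2,
\]
\[
\lambda^2\mu^2\theta^2\varphi^2|F|^2 \le \lambda^2\mu^2\|B_1\|_\infty^2\,\theta^2\varphi^2 z^2, \qquad \lambda^2\mu^2\theta^2\varphi^2|F_\Gamma|^2 \le \lambda^2\mu^2\|B_2\|_\infty^2\,\theta^2\varphi^2 z_\Gamma^2.
\]
Each of these is dominated by the corresponding leading term on the left of \eqref{1.3} (the $\varphi^3 z^2$ and $\varphi^3 z_\Gamma^2$ integrals) precisely because of the gap in the powers of $\varphi$ together with the lower bound $\varphi\ge CT^{-2}$ from \eqref{3.3}. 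Concretely, the reaction terms require $\|a_i\|_\infty^2 \lesssim \lambda^3\varphi^3$, hence $\lambda\gtrsim \|a_i\|_\infty^{2/3}T^2$, while the convection terms require $\|B_i\|_\infty^2\lesssim\lambda\varphi$, hence $\lambda\gtrsim\|B_i\|_\infty^2 T^2$; taking $\lambda$ above a fixed multiple of each threshold lets me absorb all four integrals with a small coefficient into the $z^2$ and $z_\Gamma^2$ terms of the left side.

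Finally, the diffusion integrals $\lambda^2\mu^2\mathbb{E}\iint_Q\theta^2\varphi^2 Z^2$ and its surface analogue are not absorbed but simply retained on the right: since $\mu=\mu_0$ is fixed and $\varphi\ge CT^{-2}$, for $\lambda\ge CT^2$ one has $\lambda^2\mu_0^2\varphi^2 \le C\lambda^3\varphi^3$, so these terms are controlled by the $\lambda^3\theta^2\varphi^3 Z^2$ and $\lambda^3\theta^2\varphi^3\widehat Z^2$ integrals appearing in \eqref{4.2}, while the localized $\varphi^3 z^2$ integral over $Q_0$ carries through unchanged. Collecting the thresholds — the condition $\lambda\ge\lambda_0 e^{2\mu_0|\psi|_\infty}T$ inherited from Theorem \ref{thm1.1}, together with the $T^2$, $T^2\|a_i\|_\infty^{2/3}$, and $T^2\|B_i\|_\infty^2$ requirements generated by the absorption — gives exactly the stated lower bound $\lambda\ge\lambda_0[T+T^2(1+\|a_1\|_\infty^{2/3}+\|a_2\|_\infty^{2/3}+\|B_1\|_\infty^2+\|B_2\|_\infty^2)]$. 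The only delicate point is the bookkeeping of this combined $\lambda$-threshold: one must check that each lower-order term is genuinely subordinate once $\lambda$ exceeds the maximum of the individual thresholds, and that freezing $\mu=\mu_0$ introduces no hidden $T$-dependence beyond the constants already absorbed into $C$.
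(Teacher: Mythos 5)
Your proposal is correct and follows essentially the same route as the paper: the same identification $F_1=-a_1z$, $F=zB_1$, $F_2=-a_2z_\Gamma$, $F_\Gamma=z_\Gamma B_2$ in Theorem \ref{thm1.1}, the same absorption of the reaction and convection terms via $\varphi\ge CT^{-2}$ yielding the thresholds $\lambda\gtrsim T^2\|a_i\|_\infty^{2/3}$ and $\lambda\gtrsim T^2\|B_i\|_\infty^2$, and the same upgrade $\lambda^2\varphi^2\le\lambda^3\varphi^3$ for $\lambda\ge CT^2$ to place the $Z$ and $\widehat Z$ integrals on the right. No gaps.
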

 \begin{proof}
By applying Carleman estimate \eqref{1.3} with the coefficients
     $$F_1=-a_1z,\quad F=zB_1,\quad F_2=-a_2z_\Gamma,\quad F_\Gamma=z_\Gamma B_2,$$
     and the set $\mathcal{B}=G_0$. Then we have that for $\lambda\geq C(T+T^2)$,
\begin{align}\label{1.3froncar}
 \begin{aligned}
&\,\lambda^3\mathbb{E}\iint_Q \theta^2\varphi^3z^2dxdt + \lambda^3\mathbb{E}\iint_\Sigma \theta^2\varphi^3z_\Gamma^2d\sigma dt\\
&+\lambda\mathbb{E}\iint_Q \theta^2\varphi \vert\nabla z\vert^2dxdt + \lambda\mathbb{E}\iint_\Sigma \theta^2\varphi \vert\nabla_\Gamma z_\Gamma\vert^2 d\sigma dt\\
&\leq C \bigg[ \lambda^3\mathbb{E}\iint_{Q_0} \theta^2\varphi^3 z^2 dxdt + \mathbb{E}\iint_Q \theta^2 |a_1z|^2 dxdt + \mathbb{E}\iint_\Sigma \theta^2 |a_2z_\Gamma|^2 d\sigma dt\\
&
\qquad\;+\lambda^2\mathbb{E}\iint_Q \theta^2\varphi^2 \vert zB_1\vert^2 dxdt + \lambda^2\mathbb{E}\iint_\Sigma \theta^2\varphi ^2 \vert z_\Gamma B_2\vert^2 d\sigma dt\\
&
\qquad\;+\lambda^2\mathbb{E}\iint_Q \theta^2\varphi^2 Z^2 dxdt + \lambda^2\mathbb{E}\iint_\Sigma \theta^2\varphi^2 \widehat{Z}\,^2 d\sigma dt\bigg].
\end{aligned}
\end{align}
Firstly, it is easy to see that
\begin{align}\label{inee3.55}
    \begin{aligned}
&C\mathbb{E}\iint_Q \theta^2 |a_1z|^2 dxdt + C\mathbb{E}\iint_\Sigma \theta^2 |a_2z_\Gamma|^2 d\sigma dt\\
&+C\lambda^2\mathbb{E}\iint_Q \theta^2\varphi^2 \vert zB_1\vert^2 dxdt + C\lambda^2\mathbb{E}\iint_\Sigma \theta^2\varphi ^2 \vert z_\Gamma B_2\vert^2 d\sigma dt\\
&\leq \frac{1}{2}\lambda^3\mathbb{E}\iint_Q \theta^2\varphi^3z^2dxdt + \frac{1}{2}\lambda^3\mathbb{E}\iint_\Sigma \theta^2\varphi^3z_\Gamma^2d\sigma dt,
    \end{aligned}
\end{align}
for large $\lambda\geq CT^2(\|a_1\|_\infty^{2/3}+\|a_2\|_\infty^{2/3}+\|B_1\|_\infty^2+\|B_2\|_\infty^2)$. 

We also have that for a large $\lambda\geq CT^2$,
\begin{align}\label{ineZZhat}
\begin{aligned}
&C\lambda^2\mathbb{E}\iint_Q \theta^2\varphi^2 Z^2 dxdt + C\lambda^2\mathbb{E}\iint_\Sigma \theta^2\varphi^2 \widehat{Z}\,^2 d\sigma dt\\
&\leq \lambda^3\mathbb{E}\iint_Q \theta^2\varphi^3 Z^2 dxdt + \lambda^3\mathbb{E}\iint_\Sigma \theta^2\varphi^3 \widehat{Z}\,^2 d\sigma dt.
\end{aligned}
\end{align}
Combining \eqref{1.3froncar}, \eqref{inee3.55} and \eqref{ineZZhat}, we deduce the desired estimate \eqref{4.2}.
 \end{proof}
From Corollary \ref{cor4.1}, we derive the following observability inequality.
\begin{prop}\label{prop3.2}
For any final state $(z_T,z_{\Gamma,T})\in L^2_{\mathcal{F}_T}(\Omega;\mathbb{L}^2)$, the associated solution $(z,z_\Gamma;Z,\widehat{Z})$ of equation \eqref{4.1} satisfies that 
\begin{align}\label{4.4}
\begin{aligned}
&\,\mathbb{E}\| z(0,\cdot)\|^2_{L^2(G)}+\mathbb{E}\| z_\Gamma(0,\cdot)\|^2_{L^2(\Gamma)}\\
&\leq e^{CK} \,\bigg[\mathbb{E}\iint_{Q_0} |z|^2 dx dt+\mathbb{E}\iint_{Q} |Z|^2 dx dt+\mathbb{E}\iint_{\Sigma} |\widehat{Z}|^2 d\sigma dt\bigg],
  \end{aligned}
\end{align}
where \(K\) has the following form:
\begin{align*}
K \equiv 1 + \frac{1}{T} + \| a_1 \|_\infty^{2/3}+ \| a_2 \|_\infty^{2/3} + T (\| a_1 \|_\infty+\| a_2 \|_\infty) + (1+T) (\| B_1 \|_\infty^2+\| B_2 \|_\infty^2).
\end{align*}
\end{prop}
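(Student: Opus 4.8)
The plan is to combine two ingredients: a backward energy estimate that transports the $\mathbb{L}^2$-norm of the solution at time $0$ onto a central time slab, and the Carleman estimate of Corollary \ref{cor4.1}, which dominates a weighted integral over that slab by the observation terms. Tracking the constants through both steps and reassembling them into the stated exponent $K$ is what makes the result quantitative.

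First I would establish the energy estimate. Applying Itô's formula to $\|(z(t),z_\Gamma(t))\|_{\mathbb{L}^2}^2$ for the system \eqref{4.1}, integrating by parts in the bulk and on $\Gamma$ (via Lemma \ref{lm1.2} for the weak-divergence terms and the tangential divergence formula on the surface), the coupling contributions carrying $\partial_\nu^\mathcal{A} z$ produced by the bulk and the surface equations cancel exactly thanks to the transmission condition $z_\Gamma = z|_\Gamma$, and the two boundary terms carrying $B_1\cdot\nu$ cancel as well. Setting $E(t)=\mathbb{E}\|z(t)\|_{L^2(G)}^2+\mathbb{E}\|z_\Gamma(t)\|_{L^2(\Gamma)}^2$, this produces an identity of the form
\[
\tfrac{d}{dt}E(t)=2\mathbb{E}\!\int_G \mathcal{A}\nabla z\cdot\nabla z\,dx+2\mathbb{E}\!\int_\Gamma \mathcal{A}_\Gamma\nabla_\Gamma z_\Gamma\cdot\nabla_\Gamma z_\Gamma\,d\sigma-2\mathbb{E}\!\int_G a_1 z^2\,dx-2\mathbb{E}\!\int_\Gamma a_2 z_\Gamma^2\,d\sigma-2\mathbb{E}\!\int_G z\,B_1\!\cdot\!\nabla z\,dx-2\mathbb{E}\!\int_\Gamma z_\Gamma\, B_2\!\cdot\!\nabla_\Gamma z_\Gamma\,d\sigma+\mathbb{E}\!\int_G Z^2\,dx+\mathbb{E}\!\int_\Gamma \widehat{Z}^2\,d\sigma.
\]
Keeping the two dissipation terms nonnegative by the ellipticity \eqref{asummAandAgama}, absorbing the convection terms by Young's inequality ($2|z\,B_1\!\cdot\!\nabla z|\le\beta_0|\nabla z|^2+\beta_0^{-1}\|B_1\|_\infty^2 z^2$, similarly on $\Gamma$), and discarding the surviving nonnegative gradient and $Z,\widehat Z$ contributions, I obtain $\tfrac{d}{dt}E(t)\ge -c\,E(t)$ with $c=2(\|a_1\|_\infty+\|a_2\|_\infty)+\beta_0^{-1}(\|B_1\|_\infty^2+\|B_2\|_\infty^2)$. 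Hence $e^{ct}E(t)$ is nondecreasing, so $E(0)\le e^{ct}E(t)$, and averaging over $t\in[T/4,3T/4]$ gives $E(0)\le \tfrac{2}{T}e^{cT}\int_{T/4}^{3T/4}E(t)\,dt$.

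Next I would feed this slab integral into the Carleman estimate. Discarding the nonnegative gradient terms on the left of \eqref{4.2} and restricting the remaining integrals of $z^2$ and $z_\Gamma^2$ to $[T/4,3T/4]$, where $\theta^2\varphi^3\ge m=m(\lambda,\mu_0,T)>0$, while bounding $\theta^2\varphi^3\le M=M(\lambda,\mu_0,T)$ on the whole cylinder in the three right-hand terms, I get
\[
\int_{T/4}^{3T/4}E(t)\,dt\le C\,\frac{M}{m}\left[\mathbb{E}\iint_{Q_0}z^2\,dx\,dt+\mathbb{E}\iint_Q Z^2\,dx\,dt+\mathbb{E}\iint_\Sigma \widehat{Z}^2\,d\sigma\,dt\right],
\]
the common factor $\lambda^3$ cancelling between the two sides. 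Composing with the energy estimate yields \eqref{4.4} with constant $\tfrac{2C}{T}e^{cT}\,\tfrac{M}{m}$.

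The hard part will be the bookkeeping that bounds this constant by $e^{CK}$. Writing $\theta^2\varphi^3=e^{3\mu_0\psi}\,s^{-3}\exp(-2\lambda A/s)$ with $s=t(T-t)$ and $A=e^{2\mu_0|\psi|_\infty}-e^{\mu_0\psi}$, and observing that for the admissible range of $\lambda$ the critical point $s^\ast=\tfrac23\lambda A$ exceeds $T^2/4$, the map $s\mapsto s^{-3}e^{-2\lambda A/s}$ is increasing on $(0,T^2/4]$; this locates $M$ at $s=T^2/4$ and $m$ at $s=3T^2/16$, the polynomial powers of $T$ cancel in the ratio, and one finds $M/m\le C\exp(C\lambda/T^2)$ with $C$ depending only on $\mu_0$ and $|\psi|_\infty$. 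Taking $\lambda$ at its minimal admissible value $\lambda=\lambda_0[T+T^2(1+\|a_1\|_\infty^{2/3}+\|a_2\|_\infty^{2/3}+\|B_1\|_\infty^2+\|B_2\|_\infty^2)]$ converts $C\lambda/T^2$ into $C[\tfrac1T+1+\|a_1\|_\infty^{2/3}+\|a_2\|_\infty^{2/3}+\|B_1\|_\infty^2+\|B_2\|_\infty^2]$, the energy factor contributes $cT=2T(\|a_1\|_\infty+\|a_2\|_\infty)+\beta_0^{-1}T(\|B_1\|_\infty^2+\|B_2\|_\infty^2)$, and the prefactor $\tfrac1T$ is absorbed into $e^{C/T}$. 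Summing these exponents and collapsing all constants into a single $C$ reproduces exactly $K=1+\tfrac1T+\|a_1\|_\infty^{2/3}+\|a_2\|_\infty^{2/3}+T(\|a_1\|_\infty+\|a_2\|_\infty)+(1+T)(\|B_1\|_\infty^2+\|B_2\|_\infty^2)$, completing the proof.
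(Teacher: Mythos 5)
Your argument is correct and follows essentially the same route as the paper: an It\^o/Gronwall energy estimate transporting $E(0)$ to the slab $[T/4,3T/4]$ (the paper writes it in integrated form with Gronwall's lemma, you in differential form), combined with Corollary \ref{cor4.1} restricted to that slab using the monotonicity of $s\mapsto s^{-3}e^{-2\lambda A/s}$ on $(0,T^2/4]$ to compare the minimal and maximal values of $\theta^2\varphi^3$, and then the choice $\lambda=\lambda_1(T)$ to convert $M/m$ into $e^{CK}$. Your explicit bookkeeping of the ratio $M/m\le Ce^{C\lambda/T^2}$ is in fact more detailed than what the paper records, but the substance is identical.
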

\begin{proof}
From  the Carleman inequality \eqref{4.2}, we deduce
\begin{align}\label{4.2secc}
 \begin{aligned}
&\,\mathbb{E}\int_{T/4}^{3T/4}\int_G \theta^2\varphi^3z^2dxdt +\mathbb{E}\int_{T/4}^{3T/4}\int_\Gamma \theta^2\varphi^3z_\Gamma^2d\sigma dt\\
&\leq C \bigg[\mathbb{E}\iint_{Q_0} \theta^2\varphi^3 z^2 dxdt+\mathbb{E}\iint_Q \theta^2\varphi^3 Z^2 dxdt + \mathbb{E}\iint_\Sigma \theta^2\varphi^3 \widehat{Z}\,^2 d\sigma dt\bigg],
  \end{aligned}
	\end{align}
for large  $\lambda \geq \lambda_1(T)\equiv \lambda_0\left[T + T^2\Big(1+\|a_1\|_\infty^{2/3}+\|a_2\|_\infty^{2/3}+\|B_1\|_\infty^2+\|B_2\|_\infty^2\Big)\right].$ Subsequently, notice that the functions
$$ t\mapsto\min_{x\in\overline{G}}\, [\theta^2(t,x)\varphi^3(t,x)] $$
reach their minimum in \( [T/4, 3T/4] \) at \( t = T/4 \) and 
$$ t\mapsto\max_{x\in\overline{G}}\, [\theta^2(t,x)\varphi^3(t,x)] $$
reach their maximum in \( (0,T) \) at \( t = T/2 \). Then, fixing \( \lambda = \lambda_1(T) \), we deduce that
\begin{align}\label{4.2toOBSIEN}
 \begin{aligned}
&\,\mathbb{E}\int_{T/4}^{3T/4}\int_G z^2dxdt +\mathbb{E}\int_{T/4}^{3T/4}\int_\Gamma z_\Gamma^2d\sigma dt\\
&\leq e^{C\textbf{K}_1} \bigg[\mathbb{E}\iint_{Q_0} z^2 dxdt+\mathbb{E}\iint_Q Z^2 dxdt + \mathbb{E}\iint_\Sigma \widehat{Z}\,^2 d\sigma dt\bigg],
  \end{aligned}
	\end{align}
where $\textbf{K}_1\equiv 1+\frac{1}{T}+\|a_1\|_\infty^{2/3}+\|a_2\|_\infty^{2/3}+\|B_1\|_\infty^2+\|B_2\|_\infty^2$.

On the other hand, let $t\in(0, T)$, computing $d_s \|(z,z_\Gamma)\|^2_{\mathbb{L}^2}$ by using the Itô's formula. Then integrating the equality w.r.t $s\in(0,t)$ and taking the expectation on both sides, we conclude that
\begin{align*}
    \begin{aligned}
        &\,\mathbb{E}\int_G z^2(0)dx + \mathbb{E}\int_\Gamma z_\Gamma^2(0)d\sigma -\left(\mathbb{E}\int_G z^2(t)dx + \mathbb{E}\int_\Gamma z_\Gamma^2(t)d\sigma\right)\\
        &=-2\mathbb{E}\int_0^t\int_G \mathcal{A}\nabla z\cdot\nabla z dxds+2\mathbb{E}\int_0^t\int_G a_1z^2dxds+2\mathbb{E}\int_0^t\int_G zB_1\cdot\nabla z \,dxds\\
        &\quad\,-\mathbb{E}\int_0^t\int_G Z^2 dxds-2\mathbb{E}\int_0^t\int_\Gamma \mathcal{A}_\Gamma\nabla_\Gamma z_\Gamma\cdot\nabla_\Gamma z_\Gamma d\sigma ds+2\mathbb{E}\int_0^t\int_\Gamma a_2z_\Gamma^2d\sigma ds\\
 &\quad\,+2\mathbb{E}\int_0^t\int_\Gamma z_\Gamma B_2\cdot\nabla_\Gamma z_\Gamma \,d\sigma ds-\mathbb{E}\int_0^t\int_\Gamma \widehat{Z}^2d\sigma ds.
    \end{aligned}
\end{align*}
Recalling \eqref{asummAandAgama}, and using Young's inequality, it follows that for any $\varepsilon>0$,
\begin{align}\label{4.6ine}
    \begin{aligned}
        &\,\mathbb{E}\int_G z^2(0)dx + \mathbb{E}\int_\Gamma z_\Gamma^2(0)d\sigma -\left(\mathbb{E}\int_G z^2(t)dx + \mathbb{E}\int_\Gamma z_\Gamma^2(t)d\sigma\right)\\
        &\leq -2\beta_0\mathbb{E}\int_0^t\int_G |\nabla z|^2 dxds+2\|a_1\|_\infty\mathbb{E}\int_0^t\int_G z^2dxds+\varepsilon\mathbb{E}\int_0^t\int_G |\nabla z|^2 \,dxds\\
&\quad\,+\frac{\|B_1\|_\infty^2}{\varepsilon}\mathbb{E}\int_0^t\int_G z^2 \,dxds-\mathbb{E}\int_0^t\int_G Z^2 dxds-2\beta_0\mathbb{E}\int_0^t\int_\Gamma |\nabla_\Gamma z_\Gamma|^2 d\sigma ds\\
&\quad\,+2\|a_2\|_\infty\mathbb{E}\int_0^t\int_\Gamma z_\Gamma^2d\sigma ds+\varepsilon\mathbb{E}\int_0^t\int_\Gamma |\nabla_\Gamma z_\Gamma|^2 \,d\sigma ds+\frac{\|B_2\|_\infty^2}{\varepsilon}\mathbb{E}\int_0^t\int_\Gamma z^2_\Gamma \,d\sigma ds\\
&\quad\,-\mathbb{E}\int_0^t\int_\Gamma \widehat{Z}^2d\sigma ds.
    \end{aligned}
\end{align}
Taking a small $\varepsilon$ in \eqref{4.6ine}, we conclude that
\begin{align*}
    \begin{aligned}
\mathbb{E}\int_G z^2(0)dx + \mathbb{E}\int_\Gamma z_\Gamma^2(0)d\sigma&\leq\,\mathbb{E}\int_G z^2(t)dx + \mathbb{E}\int_\Gamma z_\Gamma^2(t)d\sigma\\
&\hspace{0.4cm}+C\textbf{K}_2\left(\mathbb{E}\int_0^t\int_G z^2dxds+\mathbb{E}\int_0^t\int_\Gamma z_\Gamma^2d\sigma ds\right),
    \end{aligned}
\end{align*}
where $\textbf{K}_2\equiv \|a_1\|_\infty+\|a_2\|_\infty+\|B_1\|_\infty^2+\|B_2\|_\infty^2$. Therefore, by Gronwall's inequality, it follows that
\begin{align}\label{4.6inesec}
    \begin{aligned}
\mathbb{E}\int_G z^2(0)dx + \mathbb{E}\int_\Gamma z_\Gamma^2(0)d\sigma\leq e^{CT\textbf{K}_2}\left(\mathbb{E}\int_G z^2(t)dx + \mathbb{E}\int_\Gamma z_\Gamma^2(t)d\sigma\right).
    \end{aligned}
\end{align}
Finally, integrating \eqref{4.6inesec} on $(T/4,3T/4)$ and combining the obtained inequality with \eqref{4.2toOBSIEN}, we conclude the observability inequality \eqref{4.4}.
\end{proof}

In order to prove Theorem \ref{thmm1.2null}, we first recall the following classical result from functional analysis (see, e.g., \cite{russell73} and \cite[Theorem IV.2.2]{Zabczyk}).
\begin{lm}\label{lm4.1}
Let $\mathbf{H}$, $\mathbf{U}$ and $\mathbf{Y}$ three Hilbert spaces and two bounded linear operators $M\in\mathcal{L}(\mathbf{H};\mathbf{Y})$ and $L\in\mathcal{L}(\mathbf{U};\mathbf{Y})$. Then we have
\begin{equation}\label{condt41}
\| M^*y\|_{\mathbf{H}}\leq\mathcal{C}\| L^*y\|_{\mathbf{U}}\,,\quad\forall y\in \mathbf{Y},\quad\textnormal{for some constant}\;\; \mathcal{C}>0,
\end{equation}
 \begin{center}
     if and only if 
 \end{center}
\begin{equation}\label{condt42}
\mathcal{R}(M)\subset\mathcal{R}(L)\quad
\textnormal{i.e.,}
\quad\forall x_0\in \mathbf{H},\,\,\exists u\in \mathbf{U},\,\,\textnormal{s.t.}\,\,Mx_0=Lu,\,\quad\textnormal{and}\,\quad\| u\|_\mathbf{U}\leq \mathcal{C}\| x_0\|_\mathbf{H}.
\end{equation}
Moreover, the constant $\mathcal{C}$ in \eqref{condt41} and \eqref{condt42} is the same.
\end{lm}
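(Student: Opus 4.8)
The plan is to prove the two implications separately, noting that the direction from the estimate \eqref{condt41} to the range inclusion \eqref{condt42} is the substantive one while the converse is immediate. The whole argument is a standard duality computation in Hilbert space, so I would not expect any serious analytic difficulty; the only genuinely delicate point is a well-definedness check described below.

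First I would establish that \eqref{condt42} implies \eqref{condt41}. Assuming that for every $x_0 \in \mathbf{H}$ there exists $u \in \mathbf{U}$ with $Mx_0 = Lu$ and $\|u\|_{\mathbf{U}} \leq \mathcal{C}\|x_0\|_{\mathbf{H}}$, I would fix $y \in \mathbf{Y}$ and use the adjoint identities $\langle M^* y, x_0\rangle_{\mathbf{H}} = \langle y, Mx_0\rangle_{\mathbf{Y}} = \langle y, Lu\rangle_{\mathbf{Y}} = \langle L^* y, u\rangle_{\mathbf{U}}$. Cauchy--Schwarz then gives $|\langle M^* y, x_0\rangle_{\mathbf{H}}| \leq \|L^* y\|_{\mathbf{U}}\|u\|_{\mathbf{U}} \leq \mathcal{C}\|L^* y\|_{\mathbf{U}}\|x_0\|_{\mathbf{H}}$, and taking the supremum over $\|x_0\|_{\mathbf{H}}\leq 1$ yields \eqref{condt41} with the same constant.

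The harder direction, \eqref{condt41} $\Rightarrow$ \eqref{condt42}, is where the real work lies, and it rests on Hahn--Banach together with the Riesz representation theorem. Fixing $x_0 \in \mathbf{H}$, I would define a linear functional $F$ on the range $\mathcal{R}(L^*) \subset \mathbf{U}$ by $F(L^* y) = \langle Mx_0, y\rangle_{\mathbf{Y}}$. The crucial point, and the natural place for an error if one is careless, is well-definedness: if $L^* y_1 = L^* y_2$ then $L^*(y_1-y_2)=0$, and the hypothesis \eqref{condt41} forces $M^*(y_1-y_2)=0$, so that $\langle Mx_0, y_1\rangle_{\mathbf{Y}} = \langle x_0, M^* y_1\rangle_{\mathbf{H}} = \langle x_0, M^* y_2\rangle_{\mathbf{H}} = \langle Mx_0, y_2\rangle_{\mathbf{Y}}$. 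Thus $F$ is unambiguously defined, and the same computation $|F(L^* y)| = |\langle x_0, M^* y\rangle_{\mathbf{H}}| \leq \|x_0\|_{\mathbf{H}}\|M^* y\|_{\mathbf{H}} \leq \mathcal{C}\|x_0\|_{\mathbf{H}}\|L^* y\|_{\mathbf{U}}$ shows that $F$ is bounded on $\mathcal{R}(L^*)$ with norm at most $\mathcal{C}\|x_0\|_{\mathbf{H}}$.

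To conclude, I would extend $F$ to all of $\mathbf{U}$ without increasing its norm (Hahn--Banach, or equivalently Riesz representation on the closed subspace $\overline{\mathcal{R}(L^*)}$), obtaining $u \in \mathbf{U}$ with $\|u\|_{\mathbf{U}} \leq \mathcal{C}\|x_0\|_{\mathbf{H}}$ and $\langle L^* y, u\rangle_{\mathbf{U}} = F(L^* y) = \langle Mx_0, y\rangle_{\mathbf{Y}}$ for every $y\in\mathbf{Y}$. Rewriting the left-hand side as $\langle y, Lu\rangle_{\mathbf{Y}}$ gives $\langle y, Lu - Mx_0\rangle_{\mathbf{Y}} = 0$ for all $y$, whence $Lu = Mx_0$. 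Since the bound $\|u\|_{\mathbf{U}}\leq \mathcal{C}\|x_0\|_{\mathbf{H}}$ is inherited from the functional norm and matches the constant in \eqref{condt41}, the equality of the two constants $\mathcal{C}$ follows at once. As anticipated, the only subtlety is the well-definedness of $F$, which is precisely where the quantitative estimate \eqref{condt41} is used beyond its role in bounding $F$; everything else is routine Hilbert-space duality.
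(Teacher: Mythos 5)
Your proof is correct. Note that the paper does not actually prove this lemma: it is quoted as a classical functional-analytic fact with references to \cite{russell73} and \cite[Theorem IV.2.2]{Zabczyk}, so there is no in-paper argument to compare against. Your two-step duality argument — Cauchy--Schwarz for the easy implication, and for the converse the functional $F(L^*y)=\langle Mx_0,y\rangle_{\mathbf{Y}}$ on $\mathcal{R}(L^*)$, with well-definedness extracted from $L^*y=0\Rightarrow M^*y=0$ and the extension via Hahn--Banach/Riesz preserving the norm bound $\mathcal{C}\|x_0\|_{\mathbf{H}}$ — is precisely the standard proof given in those references, and it correctly yields that the same constant $\mathcal{C}$ works in both statements.
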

Let us first prove the null controllability of the equation \eqref{CE}.
\begin{proof}[Proof of Theorem \ref{thmm1.2null}]
Let us first define the following linear bounded operator 
$$\mathcal{M}_T:L^2_{\mathcal{F}_0}(\Omega;\mathbb{L}^2)\longrightarrow L^2_{\mathcal{F}_T}(\Omega;\mathbb{L}^2),\,\,\,\quad\mathcal{M}_T(y_0,y_{\Gamma,0})=(y(T,\cdot),y_\Gamma(T,\cdot)),$$
where $(y,y_\Gamma)$ is the solution of \eqref{CE} with controls $(u,v_1,v_2)\equiv(0,0,0)$. Also, we consider the following controllability operator
$$\mathcal{L}_T:L^2_\mathcal{F}(0,T;L^2(G_0))\times L^2_\mathcal{F}(0,T;L^2(G))\times L^2_\mathcal{F}(0,T;L^2(\Gamma))\longrightarrow L^2_{\mathcal{F}_T}(\Omega;\mathbb{L}^2),$$
$$\mathcal{L}_T(u,v_1,v_2)=(y(T,\cdot),y_\Gamma(T,\cdot)),$$
where $(y,y_\Gamma)$ is the solution of \eqref{CE} with initial data $(y_0,y_{\Gamma,0})\equiv(0,0)$.

Multiplying the equations \eqref{4.1} and \eqref{CE} using Itô's formula, we find the following duality relation:
\begin{align}\label{ch26.05}
\begin{aligned}
&\,\langle (y(T,\cdot),y_\Gamma(T,\cdot)),(z_T,z_{\Gamma,T})\rangle_{L^2_{\mathcal{F}_T}(\Omega;\mathbb{L}^2)} - \langle (y_0,y_{\Gamma,0}),(z(0,\cdot),z_\Gamma(0,\cdot))\rangle_{L^2_{\mathcal{F}_0}(\Omega;\mathbb{L}^2)}\\
&= \mathbb{E}\iint_{Q}\mathbbm{1}_{G_0} uz\, dx dt+\mathbb{E}\iint_{Q} v_1Z \,dx dt+\mathbb{E}\iint_{\Sigma} v_2\widehat{Z} \,d\sigma dt.
\end{aligned}
\end{align}
Therefore,  it is easy to see that the adjoint operators of $\mathcal{M}_T$ and $\mathcal{L}_T$ are respectively  defined by
\begin{equation}\label{6.6601}
\mathcal{M}_T^*(z_T,z_{\Gamma,T})=(z(0,\cdot),z_\Gamma(0,\cdot))\,\,\quad\textnormal{and}\quad\,\,\mathcal{L}_T^*(z_T,z_{\Gamma,T})=(\mathbbm{1}_{G_0}z,Z,\widehat{Z}),
\end{equation}
for all $(z_T,z_{\Gamma,T})\in L^2_{\mathcal{F}_T}(\Omega;\mathbb{L}^2)$.

From the observability inequality \eqref{4.4}, it is straightforward to see that
for all $(z_T,z_{\Gamma,T})\in L^2_{\mathcal{F}_T}(\Omega;\mathbb{L}^2)$,
\begin{align}\label{ineeadj4.7}
\|\mathcal{M}_T^*(z_T,z_{\Gamma,T})\|^2_{L^2_{\mathcal{F}_0}(\Omega;\mathbb{L}^2)}\leq e^{CK}\|\mathcal{L}_T^*(z_T,z_{\Gamma,T})\|^2_{L^2_\mathcal{F}(0,T;L^2(G))\times L^2_\mathcal{F}(0,T;L^2(G))\times L^2_\mathcal{F}(0,T;L^2(\Gamma))},
\end{align}
where \begin{align*}
K \equiv 1 + \frac{1}{T} + \| a_1 \|_\infty^{2/3}+ \| a_2 \|_\infty^{2/3} + T (\| a_1 \|_\infty+\| a_2 \|_\infty) + (1+T) (\| B_1 \|_\infty^2+\| B_2 \|_\infty^2).
\end{align*}
Let $(y_0, y_{\Gamma,0}) \in{L^2_{\mathcal{F}_0}(\Omega; \mathbb{L}^2)}$ be the initial state of \eqref{CE}. From Lemma \ref{lm4.1} and the inequality \eqref{ineeadj4.7}, we deduce that \(\mathcal{R}(\mathcal{M}_T) \subset \mathcal{R}(\mathcal{L}_T)\), and hence there exists controls \((\widehat{u}, \widehat{v}_1, \widehat{v}_2) \in L^2_\mathcal{F}(0,T; L^2(G_0)) \times L^2_\mathcal{F}(0,T; L^2(G)) \times L^2_\mathcal{F}(0,T; L^2(\Gamma))\) such that
\begin{align}\label{estmmcos}
\| \widehat{u} \|^2_{L^2_\mathcal{F}(0,T; L^2(G_0))} + \| \widehat{v}_1 \|^2_{L^2_\mathcal{F}(0,T; L^2(G))} + \| \widehat{v}_2 \|^2_{L^2_\mathcal{F}(0,T; L^2(\Gamma))} \leq e^{CK} \| (y_0, y_{\Gamma,0}) \|^2_{L^2_{\mathcal{F}_0}(\Omega; \mathbb{L}^2)}.
\end{align}
On the other hand, it is obvious that \eqref{CE} is null controllable at time \( T \) if and only if \( \mathcal{R}(\mathcal{M}_T) \subset \mathcal{R}(\mathcal{L}_T) \). Therefore, the equation \eqref{CE} is null controllable at time \( T \), and from \eqref{estmmcos}, the desired estimate \eqref{1.2201} holds as well. This concludes the proof of Theorem \ref{thmm1.2null}.
\end{proof}

\section{Application to an Insensitizing Control Problem for System \eqref{1.4}}\label{sec6}
Following a standard approach (see, e.g., the proof of Proposition~3.1 in \cite{barboelouk}), the insensitizing control problem \eqref{inspb} can be reduced to a null controllability problem for the following cascade system of coupled  forward--backward anisotropic stochastic  parabolic equations:
\begin{equation}\label{forr4.1}
\begin{cases}
\begin{array}{ll}
dy - \nabla\cdot(\mathcal{A}\nabla y)\,dt
= \bigl(\xi+a_1 y + B_1 \cdot \nabla y + \mathbbm{1}_{G_0} u\bigr)\,dt + v_1\,dW(t)
& \textnormal{in } Q, \\[0.3em]
dy_\Gamma - \nabla_\Gamma \cdot (\mathcal{A}_\Gamma \nabla_\Gamma y_\Gamma)\,dt
+ \partial_\nu^\mathcal{A} y\,dt
= \bigl(\xi_\Gamma+a_2 y_\Gamma + B_2 \cdot \nabla_\Gamma y_\Gamma\bigr)\,dt + v_2\,dW(t)
& \textnormal{on } \Sigma, \\[0.3em]
dz + \nabla\cdot(\mathcal{A}\nabla z) \,dt = (-a_1 z+\nabla\cdot(zB_1)-\mathbbm{1}_{\mathcal{O}}y) \,dt + Z \,dW(t) &\textnormal{in}\,\,Q,\\[0.3em]

        dz_\Gamma+\nabla_\Gamma\cdot(\mathcal{A}_\Gamma\nabla_\Gamma z_\Gamma) \,dt-\partial^\mathcal{A}_\nu z \,dt = (-a_2z_\Gamma-zB_1\cdot\nu+\nabla_\Gamma\cdot(z_\Gamma B_2-\mathbbm{1}_{\mathcal{O}^2_\Gamma}\nabla_\Gamma y_\Gamma)-\mathbbm{1}_{\mathcal{O}^1_\Gamma}y_\Gamma)\,dt+\widehat{Z} \,dW(t) &\textnormal{on}\,\,\Sigma,\\[0.3em]
y_\Gamma = y|_\Gamma,\quad z_\Gamma=z\vert_\Gamma
& \textnormal{on } \Sigma, \\[0.3em]
(y, y_\Gamma)|_{t=0} = (y_0, y_{\Gamma,0})
& \textnormal{in } G \times \Gamma,\\[0.3em]
(z, z_\Gamma)|_{t=T} = (0, 0)
& \textnormal{in } G \times \Gamma.
\end{array}
\end{cases}
\end{equation}
More precisely, the insensitizing control problem \eqref{inspb} is solvable for a control triple \((u,v_1,v_2)\in \mathcal{U}_T\) if and only if the solution \((y,y_\Gamma; z,z_\Gamma, Z,\widehat{Z})\) of system \eqref{forr4.1} satisfies the following null controllability condition:
\begin{align}\label{null controprop}
(z(0,\cdot), z_\Gamma(0,\cdot)) = (0,0) \quad \textnormal{in } G \times \Gamma, \quad \textnormal{a.s.}
\end{align}

By the classical duality argument, the null controllability problem \eqref{null controprop} for system \eqref{forr4.1} can be reduced to establishing an observability inequality for the following adjoint coupled stochastic parabolic system:
\begin{equation}\label{adback4.77}
\begin{cases}
\begin{array}{ll}
dp + \nabla\cdot(\mathcal{A}\nabla p) \,dt = (-a_1 p+\nabla\cdot(pB_1)-\mathbbm{1}_{\mathcal{O}}q) \,dt + P \,dW(t) &\textnormal{in}\,\,Q,\\[0.3em]
dp_\Gamma+\nabla_\Gamma\cdot(\mathcal{A}_\Gamma\nabla_\Gamma p_\Gamma)  \,dt-\partial^\mathcal{A}_\nu p \,dt = (-a_2p_\Gamma-pB_1\cdot\nu+\nabla_\Gamma\cdot(p_\Gamma B_2-\mathbbm{1}_{\mathcal{O}^2_\Gamma}\nabla_\Gamma q_\Gamma)-\mathbbm{1}_{\mathcal{O}^1_\Gamma}q_\Gamma)\,dt+\widehat{P} \,dW(t) &\textnormal{on}\,\,\Sigma,\\[0.3em]
dq - \nabla\cdot(\mathcal{A}\nabla q) \,dt = (a_1 q+B_1\cdot\nabla q) \,dt &\textnormal{in}\,\,Q,\\[0.3em]
dq_\Gamma-\nabla_\Gamma\cdot(\mathcal{A}_\Gamma\nabla_\Gamma q_\Gamma) \,dt+\partial^\mathcal{A}_\nu q \,dt = (a_2q_\Gamma+B_2\cdot\nabla_\Gamma q_\Gamma)\,dt &\textnormal{on}\,\,\Sigma,\\[0.3em]
p_\Gamma=p\vert_\Gamma,\quad q_\Gamma=q\vert_\Gamma &\textnormal{on}\,\,\Sigma,\\[0.3em]
(p,p_\Gamma)\vert_{t=T}=(0,0) &\textnormal{in}\,\,G\times\Gamma,\\[0.3em]
(q,q_\Gamma)\vert_{t=0}=(q_0,q_{\Gamma,0}) &\textnormal{in}\,\,G\times\Gamma,
			\end{array}
		\end{cases}
\end{equation}
where \((q_0, q_{\Gamma,0}) \in L^2_{\mathcal{F}_0}(\Omega; \mathbb{L}^2)\). 
The corresponding observability inequality is stated as follows.

\begin{prop}\label{propo5.1obseine}
Assume that $G_0 \cap \mathcal{O} \neq \emptyset$, and let $\mathcal{O}_\Gamma^1, \mathcal{O}_\Gamma^2 \subset \Gamma$ be arbitrary nonempty open subsets. Then there exist positive constants \(C\) and \(K\), depending only on \(G\), \(G_0\), \(\mathcal{O}\), \(\mathcal{O}^1_\Gamma\), \(\mathcal{O}^2_\Gamma\), \(T\), $\beta_0$, $M$, $M_\Gamma$, \(a_1\), \(a_2\), \(B_1\), and \(B_2\), such that for any \((q_0, q_{\Gamma,0}) \in L^2_{\mathcal{F}_0}(\Omega; \mathbb{L}^2)\), the solution \((p,p_\Gamma,P,\widehat{P}; q,q_\Gamma)\) of the coupled system \eqref{adback4.77} satisfies
\begin{align}\label{obseine4.9}
\begin{aligned}
&\mathbb{E} \iint_Q \exp\big(-K t^{-1}\big) \, |p|^2 \, dx \, dt
+ \mathbb{E} \iint_\Sigma \exp\big(-K t^{-1}\big) \, |p_\Gamma|^2 \, d\sigma \, dt \\
&\leq C \bigg( \mathbb{E} \iint_{Q_0} |p|^2 \, dx \, dt
+ \mathbb{E} \iint_Q |P|^2 \, dx \, dt
+ \mathbb{E} \iint_\Sigma |\widehat{P}|^2 \, d\sigma \, dt \bigg).
\end{aligned}
\end{align}
\end{prop}
Hence, the insensitizing control problem \eqref{inspb} is reduced to establishing the observability inequality \eqref{obseine4.9}. In other words, the full state \((p, p_\Gamma)\) can be reconstructed from partial observations of \(p\) on \(G_0\) together with complete information on \(P\) and \(\widehat{P}\). Classically, it is straightforward to see that the proof of the observability inequality \eqref{obseine4.9} follows from the following Carleman estimate for the coupled system \eqref{adback4.77}.

\begin{prop}\label{carlfor insencontro}
Assume that $G_0 \cap \mathcal{O} \neq \emptyset$, and let  $\mathcal{O}_\Gamma^1, \mathcal{O}_\Gamma^2 \subset \Gamma$ be arbitrary nonempty open subsets. Then there exist a constant \(C>0\) and sufficiently large parameters \(\overline{\lambda}, \overline{\mu} \geq 1\), depending only on \(G\), \(G_0\), \(\mathcal{O}\),  \(\mathcal{O}^1_\Gamma\), \(\mathcal{O}^2_\Gamma\), \(\beta_0\), $M$, and $M_\Gamma$ such that for \(\mu = \overline{\mu}\) and for any \((q_0,q_{\Gamma,0}) \in L^2_{\mathcal{F}_0}(\Omega; \mathbb{L}^2)\), the associated solution \((p,p_\Gamma,P,\widehat{P};q,q_\Gamma)\) of system \eqref{adback4.77} satisfies 
\begin{align}\label{carlestcasca}
\begin{aligned}
& \lambda^3 \, \mathbb{E} \iint_Q \theta^2 \gamma^3 |p|^2 \, dx \, dt 
+ \lambda^3 \, \mathbb{E} \iint_\Sigma \theta^2 \gamma^3 |p_\Gamma|^2 \, d\sigma \, dt
+ \lambda^4 \, \mathbb{E} \iint_Q \theta^2 \gamma^4 |q|^2 \, dx \, dt \\
& + \lambda^4 \, \mathbb{E} \iint_\Sigma \theta^2 \gamma^4 |q_\Gamma|^2 \, d\sigma \, dt
+ \lambda\mathbb{E} \iint_Q \theta^2 \gamma |\nabla p|^2 \, dx \, dt
+ \lambda\mathbb{E} \iint_\Sigma \theta^2 \gamma |\nabla_\Gamma p_\Gamma|^2 \, d\sigma \, dt \\
& + \lambda^2\mathbb{E} \iint_Q \theta^2 \gamma^2 |\nabla q|^2 \, dx \, dt
+ \lambda^2\mathbb{E} \iint_\Sigma \theta^2 \gamma^2 |\nabla_\Gamma q_\Gamma|^2 \, d\sigma \, dt \\
& \leq C \bigg[ \lambda^{13} \, \mathbb{E} \iint_{Q_0} \theta^2 \gamma^{13} |p|^2 \, dx \, dt
+ \lambda^6 \, \mathbb{E} \iint_Q \theta^2 \gamma^6 |P|^2 \, dx \, dt
+ \lambda^2 \, \mathbb{E} \iint_\Sigma \theta^2 \gamma^2 |\widehat{P}|^2 \, d\sigma \, dt \bigg],
\end{aligned}
\end{align}
for all
\[\lambda \geq \overline{\lambda}\Big[T+T^2\Big(1+\|a_1\|_\infty^{2/3}+\|a_1\|_\infty^{1/2}+\|a_2\|_\infty^{2/3}+\|B_1\|_\infty^{2}+\|B_1\|_\infty+\|B_2\|_\infty^{2} \Big)\Big].\]
\end{prop}

In order to prove the Carleman estimate \eqref{carlestcasca}, we first state the following Carleman estimate for the general forward stochastic parabolic equation
\begin{equation}\label{forq5.1}
\begin{cases}
\begin{array}{ll}
dq - \nabla\cdot(\mathcal{A}\nabla q) \,dt = f \,dt + g \,dW(t) &\textnormal{in}\,\,Q,\\
dq_\Gamma-\nabla_\Gamma\cdot(\mathcal{A}_\Gamma\nabla_\Gamma q_\Gamma) \,dt+\partial_\nu^\mathcal{A} q \,dt = f_\Gamma\,dt+g_\Gamma \,dW(t) &\textnormal{on}\,\,\Sigma,\\
q_\Gamma=q\vert_\Gamma &\textnormal{on}\,\,\Sigma,\\
(q,q_\Gamma)\vert_{t=0}=(q_0,q_{\Gamma,0}) &\textnormal{in}\,\,G\times\Gamma,
\end{array}
\end{cases}
\end{equation}
where \((q_0,q_{\Gamma,0}) \in L^2_{\mathcal{F}_0}(\Omega; \mathbb{L}^2)\) is the initial condition, \(f, g \in L^2_\mathcal{F}(0,T;L^2(G))\), and \(f_\Gamma, g_\Gamma \in L^2_\mathcal{F}(0,T;L^2(\Gamma))\).
\begin{lm}\label{lmm4.2}
Let $\mathcal{B}\Subset G$ be a nonempty open subset of $G$ and $d\in\mathbb{R}$. There exists a large $\mu_1\geq1$ such that for $\mu=\mu_1$, one can find a constant $C>0$ and a large $\lambda_1\geq 1$ depending only on $G$, $\mathcal{B}$, $\mu_1$, $d$, \(\beta_0\), $M$, and $M_\Gamma$ such that for any $f, g\in L^2_\mathcal{F}(0,T;L^2(G))$, $f_\Gamma, g_\Gamma\in L^2_\mathcal{F}(0,T;L^2(\Gamma))$, and $(q_0,q_{\Gamma,0})\in L^2_{\mathcal{F}_0}(\Omega;\mathbb{L}^2)$, the associated solution $(q,q_\Gamma)$ of \eqref{forq5.1} satisfies  that
\begin{align}\label{carfro5.2}
\begin{aligned}
&\;\lambda^d\mathbb{E}\iint_Q \theta^2\gamma^d|q|^2\,dx\,dt +\lambda^d\mathbb{E}\iint_\Sigma \theta^2\gamma^d|q_\Gamma|^2\,d\sigma\,dt\\
 &+\lambda^{d-2}\mathbb{E}\iint_Q \theta^2\gamma^{d-2} \vert\nabla q\vert^2\,dx\,dt+ \lambda^{d-2}\mathbb{E}\iint_\Sigma \theta^2\gamma^{d-2} \vert\nabla_\Gamma q_\Gamma\vert^2 \,d\sigma\,dt
\\&\leq C \bigg[ \lambda^d\mathbb{E}\int_0^T\int_{\mathcal{B}} \theta^2\gamma^d |q|^2 \,dx\,dt+ \lambda^{d-3}\mathbb{E}\iint_Q \theta^2\gamma^{d-3}|f|^2 \,dx\,dt+\lambda^{d-3}\mathbb{E}\iint_\Sigma \theta^2\gamma^{d-3}|f_\Gamma|^2 \,d\sigma\,dt\\
&\hspace{0.8cm}+\lambda^{d-1}\mathbb{E}\iint_Q \theta^2\gamma^{d-1}|g|^2 \,dx\,dt+\lambda^{d-1}\mathbb{E}\iint_\Sigma \theta^2\gamma^2|g_\Gamma|^{d-1} \,d\sigma\,dt\bigg], 
\end{aligned}\end{align}
for all $\lambda\geq \lambda_1(T+T^2)$.
\end{lm}

Notice that, for $d=3$, the Carleman estimate \eqref{carfro5.2} was established in \cite[Theorem~3.1]{elgroubacconvdbc}. For $d \neq 3$, we apply this estimate to the weighted pair $\Big((\lambda\gamma)^{\frac{d-3}{2}} q, (\lambda\gamma)^{\frac{d-3}{2}} q_\Gamma\Big)$ instead of $(q,q_\Gamma)$. By choosing $\lambda$ sufficiently large, the resulting lower-order terms can be absorbed, which yields the desired estimate \eqref{carfro5.2} for any real $d$.

In the sequel, we fix \(\mu = \overline{\mu} := \max(\mu_0,\mu_1)\), where \(\mu_0\) (resp., \(\mu_1\)) is the constant given in Theorem~\ref{thm1.1} (resp., Lemma~\ref{lmm4.2}). We are now ready to prove the key Carleman estimate \eqref{carlestcasca} of this section.

\begin{proof}[Proof of Proposition \ref{carlfor insencontro}] Let \( w_m = \theta^2 (\lambda \gamma)^m \) with \( m \in \mathbb{N} \), and consider the subsets \( \widetilde{G}_i \) (\( i = 1, 2 \)) such that
\begin{equation*}
\widetilde{G}_2 \Subset \widetilde{G}_1 \Subset \widetilde{G}_0 \subset G_0 \cap \mathcal{O}.
\end{equation*}
Moreover, we introduce smooth functions \( \rho_i \in C^{\infty}(\mathbb{R}^N) \) satisfying
\begin{align}\label{assmzeta}
\begin{aligned}
& 0 \leq \rho_i \leq 1, \quad \rho_i = 1 \,\, \text{in} \,\, \widetilde{G}_{3-i}, \quad \text{Supp}(\rho_i) \subset \widetilde{G}_{2-i}, \\ 
& \frac{\nabla \rho_i}{\rho_i^{1/2}} \in L^\infty(G; \mathbb{R}^N), \quad
\frac{\Delta \rho_i}{\rho_i^{1/2}} \in L^\infty(G) , \quad i=1,2.
\end{aligned}
\end{align}
Such functions \(\rho_1\) and \(\rho_2\) indeed exist. To see this, consider any function \(\rho \in C^{\infty}(\mathbb{R}^N)\) satisfying
\begin{align}\label{conctfof}
0 \leq \rho \leq 1, \quad \rho = 1 \,\, \text{in} \,\, S \subset \widetilde{S} \subset \mathbb{R}^N, \quad \text{Supp}(\rho) \subset \widetilde{S}.
\end{align}
One can then choose a standard cut-off function \(\rho_0 \in C_0^\infty(\mathbb{R}^N)\) satisfying \eqref{conctfof} and set \(\rho = \rho_0^4\). This choice ensures
\[
\frac{\nabla \rho}{\rho^{1/2}} \in L^\infty(G; \mathbb{R}^N), \qquad \frac{\Delta \rho}{\rho^{1/2}} \in L^\infty(G).
\]
Using \eqref{3.3} and \eqref{assmzeta}, it follows that, for sufficiently large \(\lambda \geq C(T + T^2)\), one has
\begin{align}\label{esttmforT}
|\partial_t w_m| \leq C \lambda^{m+2} \theta^2 \gamma^{m+2}, \qquad
|\nabla(w_m \rho_i)| \leq C \lambda^{m+1} \theta^2 \gamma^{m+1} \rho_i^{1/2}, \quad i=1,2.
\end{align}

We first apply the Carleman estimate \eqref{1.3}  for the backward equation in \eqref{adback4.77} with \(\mathcal{B} = \widetilde{G}_2\). Then, there exist a constant \(C>0\) and a sufficiently large \(\lambda_0 \geq 1\) such that, for all 
\[\lambda \geq \lambda_0\Big[T+T^2\Big(1+\|a_1\|_\infty^{2/3}+\|a_2\|_\infty^{2/3}+\|B_1\|_\infty^{2}+\|B_2\|_\infty^{2}\Big)\Big],\]
we have
\begin{align}\label{estt4.3f}
 \begin{aligned}
& \lambda^3 \, \mathbb{E}\iint_Q \theta^2 \gamma^3 p^2 \, dx \, dt 
+ \lambda^3 \, \mathbb{E}\iint_\Sigma \theta^2 \gamma^3 p_\Gamma^2 \, d\sigma \, dt \\
& + \lambda \, \mathbb{E}\iint_Q \theta^2 \gamma |\nabla p|^2 \, dx \, dt 
+ \lambda \, \mathbb{E}\iint_\Sigma \theta^2 \gamma |\nabla_\Gamma p_\Gamma|^2 \, d\sigma \, dt \\
& \leq C \bigg[ \lambda^3 \, \mathbb{E} \int_0^T \int_{\widetilde{G}_2} \theta^2 \gamma^3 p^2 \, dx \, dt
+ \mathbb{E} \int_0^T \int_\mathcal{O} \theta^2 q^2 \, dx \, dt+ \mathbb{E} \int_0^T \int_{\mathcal{O}^1_\Gamma} \theta^2 q_\Gamma^2 \, d\sigma \, dt \\
& \qquad + \lambda^2\mathbb{E} \int_0^T \int_{\mathcal{O}^2} \theta^2\gamma^2 |\nabla q|^2 \, dx \, dt+ \lambda^2\mathbb{E} \int_0^T \int_{\mathcal{O}^2_\Gamma} \theta^2\gamma^2 |\nabla_\Gamma q_\Gamma|^2 \, d\sigma \, dt
\\
& \qquad+ \lambda^2 \, \mathbb{E}\iint_Q \theta^2 \gamma^2 P^2 \, dx \, dt
+ \lambda^2 \, \mathbb{E}\iint_\Sigma \theta^2 \gamma^2 \widehat{P}^2 \, d\sigma \, dt \bigg].
\end{aligned}
\end{align}
On the other hand, by applying the Carleman estimate \eqref{carfro5.2} for the forward equation in \eqref{adback4.77} with \(\mathcal{B} = \widetilde{G}_2\) and $d=4$, there exist a constant \(C>0\) and a large \(\lambda_1 \geq 1\) such that, for all 
\[\lambda \geq \lambda_1\Big[T+T^2\Big(1+\|a_1\|_\infty^{2/3}+\|a_2\|_\infty^{2/3}+\|B_1\|_\infty^{2}+\|B_2\|_\infty^{2}\Big)\Big],\]
one has
\begin{align}\label{estimm5.4}
\begin{aligned}
& \lambda^4 \, \mathbb{E}\iint_Q \theta^2 \gamma^4 q^2 \, dx \, dt
+ \lambda^4 \, \mathbb{E}\iint_\Sigma \theta^2 \gamma^4 q_\Gamma^2 \, d\sigma \, dt \\
& + \lambda^2 \, \mathbb{E}\iint_Q \theta^2 \gamma^2 |\nabla q|^2 \, dx \, dt 
+ \lambda^2 \, \mathbb{E}\iint_\Sigma \theta^2 \gamma^2 |\nabla_\Gamma q_\Gamma|^2 \, d\sigma \, dt \\
& \leq C \lambda^4 \, \mathbb{E} \int_0^T \int_{\widetilde{G}_2} \theta^2 \gamma^4 q^2 \, dx \, dt.
\end{aligned}
\end{align}
Combining \eqref{estt4.3f} and \eqref{estimm5.4}, and taking  
\[\lambda \geq \max(\lambda_0,\lambda_1)\Big[T+T^2\Big(1+\|a_1\|_\infty^{2/3}+\|a_2\|_\infty^{2/3}+\|B_1\|_\infty^{2}+\|B_2\|_\infty^{2}\Big)\Big],\]
we obtain
\begin{align}\label{estt4.3fsec}
 \begin{aligned}
& \lambda^3 \, \mathbb{E}\iint_Q \theta^2 \gamma^3 p^2 \, dx \, dt 
+ \lambda^3 \, \mathbb{E}\iint_\Sigma \theta^2 \gamma^3 p_\Gamma^2 \, d\sigma \, dt
+ \lambda \, \mathbb{E}\iint_Q \theta^2 \gamma |\nabla p|^2 \, dx \, dt \\
& + \lambda \, \mathbb{E}\iint_\Sigma \theta^2 \gamma |\nabla_\Gamma p_\Gamma|^2 \, d\sigma \, dt
+ \lambda^4 \, \mathbb{E}\iint_Q \theta^2 \gamma^4 q^2 \, dx \, dt
+ \lambda^4 \, \mathbb{E}\iint_\Sigma \theta^2 \gamma^4 q_\Gamma^2 \, d\sigma \, dt \\
& + \lambda^2 \, \mathbb{E}\iint_Q \theta^2 \gamma^2 |\nabla q|^2 \, dx \, dt
+ \lambda^2 \, \mathbb{E}\iint_\Sigma \theta^2 \gamma^2 |\nabla_\Gamma q_\Gamma|^2 \, d\sigma \, dt \\
& \leq C \bigg[ \lambda^3 \, \mathbb{E} \int_0^T \int_{\widetilde{G}_2} \theta^2 \gamma^3 p^2 \, dx \, dt
+ \lambda^4 \, \mathbb{E} \int_0^T \int_{\widetilde{G}_2} \theta^2 \gamma^4 q^2 \, dx \, dt \\
& \qquad\quad + \lambda^2 \, \mathbb{E}\iint_Q \theta^2 \gamma^2 P^2 \, dx \, dt
+ \lambda^2 \, \mathbb{E}\iint_\Sigma \theta^2 \gamma^2 \widehat{P}^2 \, d\sigma \, dt \bigg].
\end{aligned}
\end{align}
We now absorb the second localized term on the right-hand side of \eqref{estt4.3fsec}. First, note that
\begin{align}\label{firine}
\lambda^4 \, \mathbb{E} \int_0^T \int_{\widetilde{G}_2} \theta^2 \gamma^4 q^2 \, dx \, dt \leq \mathbb{E} \iint_Q \rho_1 w_4 q^2 \, dx \, dt.
\end{align}
Next, applying Itô's formula to \(d(\rho_1 w_4 p q)\), integrating over \(Q\), and taking the expectation, we obtain
\begin{align}\label{ineqq6.15}
\begin{aligned}
\mathbb{E} \iint_Q \rho_1 w_4 q^2 \, dx \, dt 
= & \, \mathbb{E} \iint_Q \rho_1 \, \partial_t(w_4) \, p q \, dx \, dt 
+ \mathbb{E} \iint_Q q \, \mathcal{A} \nabla p \cdot \nabla(\rho_1 w_4) \, dx \, dt \\
& - \mathbb{E} \iint_Q p \, \mathcal{A} \nabla q \cdot \nabla(\rho_1 w_4) \, dx \, dt
- \mathbb{E} \iint_Q p q \, B_1 \cdot \nabla(\rho_1 w_4) \, dx \, dt.
\end{aligned}
\end{align}
In the right-hand side of \eqref{ineqq6.15}, using \eqref{esttmforT} and Young's inequality, we obtain that for any \(\varepsilon > 0\) and sufficiently large \(\lambda\geq C\|B_1\|_\infty T^2\),
\begin{align}\label{lasineaf515}
\begin{aligned}
\mathbb{E}\iint_Q \rho_1 w_4 q^2 \, dx \, dt
\leq &\, 3 \varepsilon \, \lambda^4 \, \mathbb{E}\iint_Q \theta^2 \gamma^4 q^2 \, dx \, dt
+ \varepsilon \, \lambda^2 \, \mathbb{E}\iint_Q \theta^2 \gamma^2 |\nabla q|^2 \, dx \, dt \\
& + \frac{C}{\varepsilon} \, \lambda^8 \, \mathbb{E}\iint_Q \theta^2 \gamma^8 \rho_1 p^2 \, dx \, dt
+ \frac{C}{\varepsilon} \, \lambda^6 \, \mathbb{E}\iint_Q \theta^2 \gamma^6 \rho_1 |\nabla p|^2 \, dx \, dt.
\end{aligned}
\end{align}
Combining \eqref{lasineaf515}, \eqref{firine}, and \eqref{estt4.3fsec}, and choosing \(\varepsilon\) small enough and sufficiently large
\[\lambda \geq C\Big[T+T^2\Big(1+\|a_1\|_\infty^{2/3}+\|a_2\|_\infty^{2/3}+\|B_1\|_\infty^{2}+\|B_1\|_\infty +\|B_2\|_\infty^{2}\Big)\Big],\]
we get
\begin{align}\label{estt4.3fsecfi}
\begin{aligned}
& \lambda^3 \, \mathbb{E}\iint_Q \theta^2 \gamma^3 p^2 \, dx \, dt
+ \lambda^3 \, \mathbb{E}\iint_\Sigma \theta^2 \gamma^3 p_\Gamma^2 \, d\sigma \, dt
+ \lambda \, \mathbb{E}\iint_Q \theta^2 \gamma |\nabla p|^2 \, dx \, dt \\
& + \lambda \, \mathbb{E}\iint_\Sigma \theta^2 \gamma |\nabla_\Gamma p_\Gamma|^2 \, d\sigma \, dt
+ \lambda^4 \, \mathbb{E}\iint_Q \theta^2 \gamma^4 q^2 \, dx \, dt
+ \lambda^4 \, \mathbb{E}\iint_\Sigma \theta^2 \gamma^4 q_\Gamma^2 \, d\sigma \, dt \\
& + \lambda^2 \, \mathbb{E}\iint_Q \theta^2 \gamma^2 |\nabla q|^2 \, dx \, dt
+ \lambda^2 \, \mathbb{E}\iint_\Sigma \theta^2 \gamma^2 |\nabla_\Gamma q_\Gamma|^2 \, d\sigma \, dt \\
& \leq C \bigg[ \lambda^8 \, \mathbb{E} \int_0^T \int_{\widetilde{G}_1} \theta^2 \gamma^8 p^2 \, dx \, dt
+ \lambda^6 \, \mathbb{E} \int_0^T \int_{\widetilde{G}_1} \theta^2 \gamma^6 |\nabla p|^2 \, dx \, dt \\
& \qquad + \lambda^2 \, \mathbb{E}\iint_Q \theta^2 \gamma^2 P^2 \, dx \, dt
+ \lambda^2 \, \mathbb{E}\iint_\Sigma \theta^2 \gamma^2 \widehat{P}^2 \, d\sigma \, dt \bigg].
\end{aligned}
\end{align}
Next, we absorb the second localized integral on the right-hand side of \eqref{estt4.3fsecfi}. First, observe that
\begin{align}\label{ineqqlas6.18}
\lambda^6 \, \mathbb{E} \int_0^T \int_{\widetilde{G}_1} \theta^2 \gamma^6 |\nabla p|^2 \, dx \, dt
\leq \frac{1}{\beta_0} \, \mathbb{E} \iint_Q \rho_2 w_6 \, \mathcal{A} \nabla p \cdot \nabla p \, dx \, dt.
\end{align}
On the other hand, applying Itô's formula to \(d(\rho_2 w_5 p^2)\), we derive
\begin{align*}
\begin{aligned}
2 \, \mathbb{E} \iint_Q \rho_2 w_6 \mathcal{A} \nabla p \cdot \nabla p \, dx \, dt
= & - \mathbb{E} \iint_Q \rho_2 \, \partial_t(w_6) \, p^2 \, dx \, dt
- 2 \, \mathbb{E} \iint_Q p \, \mathcal{A} \nabla p \cdot \nabla (\rho_2 w_6) \, dx \, dt \\
& + 2 \, \mathbb{E} \iint_Q \rho_2 w_6 a_1 p^2 \, dx \, dt
+ 2 \, \mathbb{E} \iint_Q p B_1 \cdot \nabla (\rho_2 w_6 p) \, dx \, dt \\
& + 2 \, \mathbb{E} \iint_Q \rho_2 w_6 p q \, dx \, dt
- \mathbb{E} \iint_Q \rho_2 w_6 P^2 \, dx \, dt.
\end{aligned}
\end{align*}
Using \eqref{esttmforT} and Young's inequality, it follows that for any \(\varepsilon > 0\) and sufficiently large 
\(\lambda\geq C(\|a_1\|_\infty^{1/2}+\|B_1\|_\infty) T^2\), we derive
\begin{align}\label{ineqqafr6.18}
\begin{aligned}
2 \, \mathbb{E} \iint_Q \rho_2 w_5 \mathcal{A} \nabla p \cdot \nabla p \, dx \, dt
\leq & \, 2 \varepsilon \, \lambda \, \mathbb{E} \iint_Q \theta^2 \gamma |\nabla p|^2 \, dx \, dt
+ \varepsilon \, \lambda^4 \, \mathbb{E} \iint_Q \theta^2 \gamma^4 q^2 \, dx \, dt \\
& + C \, \lambda^8 \, \mathbb{E} \iint_Q \theta^2 \gamma^8 \rho_2 p^2 \, dx \, dt
+ \frac{C}{\varepsilon} \, \lambda^{13} \, \mathbb{E} \iint_Q \theta^2 \gamma^{13} \rho_2 p^2 \, dx \, dt \\
& + C \, \lambda^6 \, \mathbb{E} \iint_Q \theta^2 \gamma^6 P^2 \, dx \, dt.
\end{aligned}
\end{align}
Finally, combining \eqref{ineqqafr6.18}, \eqref{ineqqlas6.18}, and \eqref{estt4.3fsecfi}, and taking \(\varepsilon\) sufficiently small and a large 
\[\lambda \geq C\Big[T+T^2\Big(1+\|a_1\|_\infty^{2/3}+\|a_1\|_\infty^{1/2}+\|a_2\|_\infty^{2/3}+\|B_1\|_\infty^{2}+\|B_2\|_\infty^{2}+\|B_1\|_\infty \Big)\Big],\]
we obtain
\begin{align}\label{estt4.3fsecfifinal}
\begin{aligned}
& \lambda^3 \, \mathbb{E}\iint_Q \theta^2 \gamma^3 p^2 \, dx \, dt
+ \lambda^3 \, \mathbb{E}\iint_\Sigma \theta^2 \gamma^3 p_\Gamma^2 \, d\sigma \, dt
+ \lambda \, \mathbb{E}\iint_Q \theta^2 \gamma |\nabla p|^2 \, dx \, dt \\
& + \lambda \, \mathbb{E}\iint_\Sigma \theta^2 \gamma |\nabla_\Gamma p_\Gamma|^2 \, d\sigma \, dt
+ \lambda^4 \, \mathbb{E}\iint_Q \theta^2 \gamma^4 q^2 \, dx \, dt
+ \lambda^4 \, \mathbb{E}\iint_\Sigma \theta^2 \gamma^4 q_\Gamma^2 \, d\sigma \, dt \\
& + \lambda^2 \, \mathbb{E}\iint_Q \theta^2 \gamma^2 |\nabla q|^2 \, dx \, dt
+ \lambda^2 \, \mathbb{E}\iint_\Sigma \theta^2 \gamma^2 |\nabla_\Gamma q_\Gamma|^2 \, d\sigma \, dt \\
& \leq C \bigg[ \lambda^{13} \, \mathbb{E} \iint_{Q_0} \theta^2 \gamma^{13} p^2 \, dx \, dt
+ \lambda^6 \, \mathbb{E} \iint_Q \theta^2 \gamma^6 P^2 \, dx \, dt
+ \lambda^2 \, \mathbb{E} \iint_\Sigma \theta^2 \gamma^2 \widehat{P}^2 \, d\sigma \, dt \bigg].
\end{aligned}
\end{align}
This concludes the proof of Proposition \ref{carlfor insencontro}.
\end{proof}

We are now in position to prove the result on the existence of insensitizing controls for equation \eqref{1.4}, stated in Theorem~\ref{thmm1.3ins}.

\begin{proof}[Proof of Theorem \ref{thmm1.3ins}]
Let us define the linear subspace
\[
\mathcal{H} = \Big\{ (\mathbbm{1}_{G_0}p, P, \widehat{P}) \;\Big|\; (p, p_\Gamma, P, \widehat{P}; q, q_\Gamma) \text{ solves } \eqref{adback4.77} \text{ for some } (q_0, q_{0,\Gamma}) \in L^2_{\mathcal{F}_0}(\Omega; \mathbb{L}^2) \Big\}.
\]
Consider a linear functional \(\mathcal{L}\colon \mathcal{H} \to \mathbb{R}\) by
\[
\mathcal{L}(\mathbbm{1}_{G_0}p, P, \widehat{P}) = -\mathbb{E} \iint_Q p \xi \, dx \, dt - \mathbb{E} \iint_\Sigma p_\Gamma \xi_\Gamma \, d\sigma \, dt.
\]
By the observability inequality \eqref{obseine4.9}, \(\mathcal{L}\) is bounded, and its norm satisfies
\[
\|\mathcal{L}\|_{\mathcal{L}(\mathcal{H}; \mathbb{R})}^2
\le C \bigg( \mathbb{E}\iint_Q \exp(K t^{-1})|\xi|^2 \,dx\,dt + \mathbb{E}\iint_\Sigma \exp(K t^{-1})|\xi_\Gamma|^2 \,d\sigma\,dt \bigg).
\]
Extending \(\mathcal{L}\) to the whole space \(\mathcal{U}_T\) via the Hahn–Banach theorem and applying the Riesz representation theorem, there exist controls \((u, v_1, v_2) \in \mathcal{U}_T\) such that
\begin{align}\label{eq:controls}
-\mathbb{E} \iint_Q p \xi \, dx \, dt - \mathbb{E} \iint_\Sigma p_\Gamma \xi_\Gamma \, d\sigma \, dt
= \mathbb{E} \iint_{Q_0} u p \, dx \, dt + \mathbb{E} \iint_Q v_1 P \, dx \, dt + \mathbb{E} \iint_\Sigma v_2 \widehat{P} \, d\sigma \, dt,
\end{align}
with
\[
E(u, v_1, v_2) \le C \bigg( \mathbb{E}\iint_Q \exp(K t^{-1})|\xi|^2 \,dx\,dt + \mathbb{E}\iint_\Sigma \exp(K t^{-1})|\xi_\Gamma|^2 \,d\sigma\,dt \bigg).
\]
Using Itô's formula for \((y, y_\Gamma)\) and \((p, p_\Gamma)\), and for \((z, z_\Gamma)\) and \((q, q_\Gamma)\), we obtain
\begin{align*}
\mathbb{E} \int_G z(0) q_0 \, dx + \mathbb{E} \int_\Gamma z_\Gamma(0) q_{0,\Gamma} \, d\sigma 
= &\,\mathbb{E} \iint_Q \xi p \, dx \, dt + \mathbb{E} \iint_\Sigma \xi_\Gamma p_\Gamma \, d\sigma \, dt + \mathbb{E} \iint_{Q_0} u p \, dx \, dt \\
&+ \mathbb{E} \iint_Q P v_1 \, dx \, dt + \mathbb{E} \iint_\Sigma \widehat{P} v_2 \, d\sigma \, dt.
\end{align*}
This, together with \eqref{eq:controls}, yields
\[
\mathbb{E} \int_G z(0) q_0 \, dx + \mathbb{E} \int_\Gamma z_\Gamma(0) q_{0,\Gamma} \, d\sigma = 0.
\]
Since \((q_0, q_{\Gamma,0})\) is arbitrary in \(L^2_{\mathcal{F}_0}(\Omega; \mathbb{L}^2)\), it follows that
\[
z(0, \cdot) = 0 \;\; \text{in } G, \quad z_\Gamma(0, \cdot) = 0 \;\; \text{on } \Gamma, \quad \mathbb{P}\text{-a.s.}
\]
This completes the proof of Theorem~\ref{thmm1.3ins}.
\end{proof}

\begin{center}
\end{center}
\vspace{2cm}
\begin{flushleft}
$^\dagger$ Cadi Ayyad University, National School of Applied Sciences,  
Laboratory of Mathematics, Modeling and Automatic Systems,  
B.P. 575, Marrakesh, Morocco.  
\\ \quad Email: \texttt{s.boulite@uca.ma}

\vspace{0.4cm}

$^\ddagger$ Dipartimento di Matematica, Università degli Studi di Salerno,  
Via Giovanni Paolo II, 132, 84084 Fisciano (SA), Italy.  
\\\quad Email: \texttt{aelgrou@unisa.it}, \texttt{arhandi@unisa.it}

\vspace{0.4cm}

$^\S$ Cadi Ayyad University, Faculty of Sciences Semlalia,  
Laboratory of Mathematics, Modeling and Automatic Systems,  
B.P. 2390, Marrakesh, Morocco.  
\\\quad Email: \texttt{maniar@uca.ma}

\vspace{0.4cm}

$^\top$ University Mohammed VI Polytechnic,  Vanguard Center, Rabat, Morocco.  
\end{flushleft}
\end{document}